\theoremstyle{plain} 
\newtheorem{theorem}{Theorem}[section]
\newtheorem*{theorem*}{Theorem}
\newtheorem{lemma}{Lemma}[section]
\newtheorem{proposition}{Proposition}[section]
\newtheorem{corollary}{Corollary}[section]
\newtheorem*{corollary*}{Corollary}
\theoremstyle{definition}
\newtheorem*{definition}{Definition}
\theoremstyle{remark}
\newtheorem{remark}{Remark}[section]
\newtheorem{es}{\bf Example}
\numberwithin{equation}{section}
\definecolor{DarkGreen}{rgb}{0,0.5,0.1} 
\newcommand\soutD{\bgroup\markoverwith
{\textcolor{DarkGreen}{\rule[.5ex]{2pt}{1pt}}}\ULon}
\newcommand{\Hm}[1]{\leavevmode{\marginpar{\tiny%
$\hbox to 0mm{\hspace*{-1.5mm}$\leftarrow$\hss}%
\vcenter{\vrule depth 0.1mm height 0.1mm width \the\marginparwidth}%
\hbox to
0mm{\hss$\rightarrow$\hspace*{-1.5mm}}$\\\relax\raggedright #1}}}
\definecolor{Darkgblue}{rgb}{0.3,0.3,0.5}
\newcommand{\al}{\alpha}
\newcommand{\la}{\lambda}
\def\erre{\mathbb{R}}
\def\zeta{\mathbb{Z}}
\newcommand\rN{\erre^{N}}
\newcommand{\hn}{\mathbb{H}_n}
    \newcommand{\pa}{\partial}
\def\L{\mathcal{L}}
\newcommand{\tl}{~}
\DeclarePairedDelimiter{\abs}{\lvert}{\rvert}
\DeclareMathOperator{\Div}{div}
\newcommand{\norma}[1]{\left\lVert#1\right\rVert}
\newcommand{\diff}{\!\setminus\!}
\newcommand{\dyle}{\displaystyle}
\title{Weighted Poincaré inequality and Hardy improvements related to some degenerate elliptic differential operators}
\author[1]{Lorenzo D'Arca}
\affil[1]{Department of Mathematics “Guido Castelnuovo”, Sapienza University of Rome,\newline Piazzale Aldo Moro 5, Roma
00185, Italy; lorenzo.darca@uniroma1.it}
\begin{document}
\date{\small 15 July 2024}
\maketitle

\begin{abstract}
\noindent
In this paper, we characterize the sharp constant and maximizing functions for weighted Poincaré inequalities.
These results lead to refinements of Hardy's inequality obtained by adding remainder terms involving \(L^p\) norms.
We use techniques that avoid symmetric rearrangement argument, simplifying the analysis of these inequalities in both Euclidean and non-Euclidean contexts. Specifically, this method applies to a variety of settings, such as the Heisenberg group, various Carnot groups and operators expressed as sums of squares of vector fields. Significant examples include the Heisenberg-Greiner operator and the Baouendi-Grushin operator.
\end{abstract}

\section{Introduction}
In the field of functional inequalities, Poincaré inequality and Hardy inequality play a crucial role due to their importance in various areas of mathematics (see for example \cite{BE}, \cite{OK} and the references therein). 
In this paper, we characterize the sharp constant and the maximizing functions for certain weighted Poincaré inequalities.
These results are used to derive \(L^p\) generalizations of the Brezis-Vázquez improvement of Hardy's inequality, \cite{BV},
\begin{equation}\label{INTRO: EQ: Brezis-Vazquez}
\int_{\Omega} |\nabla u|^2 \, dx \geq \left(\frac{N-2}{2}\right)^2 \int_{\Omega} \frac{u^2}{|x|^2} \, dx + z_0^2 \left(\frac{\omega_N}{|\Omega|}\right)^{\frac{2}{N}} \int_{\Omega} |u|^2 \, dx 
\end{equation}

Here \( N \geq 2 \), \( z_0 \) denotes the first zero of the Bessel function \( J_0 \), while \(\omega_N\) and \( | \Omega | \) represent the \( N \)-dimensional Lebesgue measure of the unit sphere \( B \subset \mathbb{R}^N \) and the domain \( \Omega \), respectively.
In the Euclidean context, Gazzola, Grunau and Mitidieri extended in \cite{GGM} this inequality to the \( L^p \) setting. In \cite{BV} and \cite{GGM}, inequality (\ref{INTRO: EQ: Brezis-Vazquez}) and its \( L^p \) version are proven using a symmetric rearrangement argument to reduce the problem to a one-dimensional case. However, this technique does not apply to weighted inequalities and cannot be extended beyond the Euclidean setting. See Ghoussoub and Moradifam \cite{GM0} and \cite{GM} for another approach to proving \eqref{INTRO: EQ: Brezis-Vazquez}, where many other \( L^2 \) Hardy inequalities with general weights \( V \) and \( W \) are also established.
Refined versions of Hardy inequalities seem to have first appeared in \cite{Ma}. Remainder terms also appear in other Sobolev inequalities \cite{BL}, \cite{GG}.
In this paper, we aim to study weighted versions of inequality (\ref{INTRO: EQ: Brezis-Vazquez}) and, in doing so, we also examine weighted versions of Poincaré inequality. The work is conducted in a general setting that simultaneously captures the structure of the Euclidean context, the Heisenberg group, Carnot groups, and certain second-order differential operators such as the Baouendi-Grushin operator and the Heisenberg-Greiner operator.

We use techniques recently introduced by the author in \cite{LD}, which provide a simple and compact way to study functional inequalities like Hardy's and Rellich's in both Euclidean and non-Euclidean contexts. These techniques allow us to avoid using symmetric rearrangement argument, enabling us to handle the \( L^p \) and weighted versions in the aforementioned contexts.
We present the results within our general framework; however, to the best of the author's knowledge, they are novel even in the Euclidean context.

In Section \ref{SEC: Main Results}, we introduce the general setting and our main results. In Section \ref{SEC POINCARE}, we address weighted Poincaré inequalities:
\begin{equation}\label{INTRO: EQ: POINCARE}
\left(\frac{\nu_1(p,\theta)}{R}\right)^p \int_{B_R} \frac{|u|^p}{|x|^{N-\theta}} \, dx \leq \int_{B_R} \frac{|\nabla u|^p}{|x|^{N-\theta}} \, dx 
\end{equation}
Here \(\nu_1(p,\theta)\) is the first zero of an appropriate special function. For example, if \( p = \theta = 2 \), then \(\nu_1(2,2) = z_0\), the first zero of the Bessel function \( J_0 \).

In Section \ref{SEC Hardy IMPROVEMENT}, we propose two different ways to generalize inequality (\ref{INTRO: EQ: Brezis-Vazquez}):
\begin{align}
\label{INTRO: EQ: HARDY TIPE I}
&\int_{B_R} \frac{|\nabla u|^p}{|x|^{p(\theta - 1)}} \, dx \geq \left|\frac{N - p\theta}{p}\right|^p \int_{B_R} \frac{|u|^p}{|x|^{p\theta}} \, dx + \lambda_p \frac{\nu_1(p,p)^p}{R^p} \int_{B_R} \frac{|u|^p}{|x|^{p(\theta - 1)}} \, dx \\[10pt]
%
\label{INTRO: EQ: HARDY TIPE II}
&\int_{B_R} \frac{|\nabla u|^p}{|x|^{p(\theta - 1)}} \, dx \geq \left|\frac{N - p\theta}{p}\right|^p \int_{B_R} \frac{|u|^p}{|x|^{p\theta}} \, dx + \frac{2}{p} \left| \frac{N - p\theta}{p} \right|^{p-2} \frac{z_0^2}{R^2} \int_{B_R} \frac{|u|^p}{|x|^{p\theta - 2}} \, dx 
\end{align}

\(\lambda_p\) is a fixed constant that depends only on \(p\); in particular, for \(p = 2\), \(\lambda_p = 1\).
We note that for \( p = 2 \), inequalities (\ref{INTRO: EQ: HARDY TIPE I}) and (\ref{INTRO: EQ: HARDY TIPE II}) coincide with the weighted version of inequality (\ref{INTRO: EQ: Brezis-Vazquez}).



In Section \ref{SEC: Applicazioni particolari}, we discuss specific examples within our general framework. These include adaptations to the Euclidean context, the Heisenberg group, the Baouendi-Grushin operator, the Heisenberg-Greiner operator, and homogeneous Carnot groups.

Finally, in Appendix \ref{Appendice A}, we study in detail some nonlinear ODEs that are fundamental for inequality (\ref{INTRO: EQ: POINCARE}). These results should be well-known, but the author has not found a comprehensive reference in the literature. The arguments used are an adaptation to our case of ideas found in \cite{MCRW}, \cite{H},\cite{W}, \cite{O}. Appendix \ref{Appendice B} contains the proof of some simple but essential results for studying improvements (\ref{INTRO: EQ: HARDY TIPE I}) and (\ref{INTRO: EQ: HARDY TIPE II}).

\subsection{The main results}\label{SEC: Main Results}
In order to state our main results, we need to introduce some preliminary notations.\\\\
In \(\mathbb{R}^N\), we consider \(h \leq N\) vector fields \[ 
X_i = \sum_{j=1}^N \sigma_{i,j} \frac{\partial}{\partial x_j}, \quad i=1, \ldots, h, 
\]

and assume that \(\sigma_{i,j} \in C(\mathbb{R}^N)\) and \(\frac{\partial}{\partial x_j} \sigma_{i,j} \in C(\mathbb{R}^N)\). Under these assumptions, the formal adjoint of \(X_i\) is well defined as: 
\[ 
X_i^* = -\sum_{j=1}^N \frac{\partial}{\partial x_j} (\sigma_{i,j}\, \cdot\, ).
\] 
We denote by \(\nabla_\L\) the vector field 
\[ 
\nabla_\L \coloneqq (X_1, \ldots, X_h) = \sigma \nabla,
\] 
where \(\sigma\) is the \(h \times N\) matrix given by \((\sigma(x))_{i,j} = \sigma_{i,j}(x)\) and \(\nabla\) is the usual Euclidean gradient. \\ 
We also assume that there exists a family of dilations on \(\mathbb{R}^N\) 
\[ 
\delta_\lambda(x) = (\lambda^{\beta_1} x_1, \ldots, \lambda^{\beta_N} x_N) \quad \text{with} \quad \beta_j > 0, \]
with respect to which \(\nabla_\L\) is a vector field homogeneous of degree one. This means that for each \(i=1,\ldots,h\), for every smooth function \(f\), \(x \in \mathbb{R}^N\), and for every \(\lambda > 0\), we have \(X_i(f(\delta_\lambda(x))) = \lambda\, (X_i f)(\delta_\lambda(x))\).
Finally, we set \(Q = \beta_1 + \ldots + \beta_N\) as the homogeneous dimension defined by the dilations.\\
In the following we will use the notation 
\[ \Div_\L(\cdot) = -\nabla^*_\L \cdot = \Div(\sigma^T \cdot). 
\]
We can now define the counterpart of the Laplace and p-Laplace operators. 
We define the second-order differential operator 
\[ 
\L \coloneqq 
\Div(\sigma^T \sigma \nabla) = -\sum_{i=1}^h X_i^* X_i = -\nabla_\L^* \cdot \nabla_\L=\Div_\L(\nabla_\L), 
\]
and, for \(p>1\), we denote by \(\L_p\) the operator  
\[ 
\L_p(u) = 
-\nabla_\L^* \cdot (|\nabla_\L u|^{p-2}\nabla_\L u)=\Div_\L(|\nabla_\L u|^{p-2}\nabla_\L u). 
\]

\begin{es}[Euclidean Laplacian]
Let \(X_i = \partial_{x_i}\) for \(i=1, \ldots, N\). It is clear that \(\nabla_\L = (X_1, \ldots, X_N)\) is the usual Euclidean gradient and the corresponding operator \(\L_p\) is the usual p-Laplacian.\\
\end{es}

\begin{es}[Heisenberg sub-Laplacian]
Let \(N=2n+1\) and \((z,t)=(x,y,t)\in \mathbb{R}^{2n}\times\mathbb{R}\). Consider the vector fields
\[
X_i=\partial_{x_i}+2y_i\partial_t \quad \text{and} \quad Y_i=\partial_{y_i}-2x_i\partial_t \quad \text{for} \quad i=1,\ldots,n.
\]

Then, the \(2n \times (2n+1)\) matrix \(\sigma\) is given by
\[
\begin{pmatrix} 
\mathbf{I}_n & 0 & 2y \\
0 & \mathbf{I}_n & -2x 
\end{pmatrix}
\]

The corresponding vector field \(\nabla_{\hn}\) is the Heisenberg horizontal gradient , and the operator \(\L\) is the Heisenberg sub-Laplacian \( \L = \sum_{i=1}^n X_i^2 + Y_i^2 = \Delta_{\hn} \) 
(Here we are using that \(X_i^*=-X_i, Y_i^*=-Y_i\)).\\
\end{es}

Refer to section \ref{SEC: Applicazioni particolari} for a detailed discussion of these and other examples, such as Baouendi-Grushin type operator, Heisenberg-Greiner operator or sub-Laplacian on Carnot groups.\\

Let us now introduce some preliminary results on certain second-order nonlinear ODEs. For the reader's convenience, detailed proofs of Proposition \ref{1: PROP: Esistenza Unicità e Oscillazioni} and Proposition \ref{1: THM: Teorema Fondamentale} can be found in Appendix \ref{Appendice A}.

In what follows, we use the notation \(\phi_p(t) = |t|^{p-2}t\).

\begin{proposition}\label{1: PROP: Esistenza Unicità e Oscillazioni}
Let \(\theta \geq 1\). The problem
\[
\text{\bf{(P)}}\quad
\begin{cases} 
(r^{\theta-1}\phi_p(\varphi'))'+r^{\theta-1}\phi_p(\varphi)=0, & r \geq 0 \\
\varphi(0)=1, \quad \varphi'(0)=0
\end{cases}
\]
admits a unique solution \( \varphi \in C^1[0,+\infty) \) with \( r^{\theta-1}\phi_p(\varphi') \in C^1[0,+\infty) \).
Furthermore, this solution is oscillatory, meaning for every \( r > 0 \) there exists a \( t > r \) such that \( \varphi(t) = 0 \), and each zero is simple (if \( \varphi(z_0) = 0 \), then \( \varphi'(z_0) \neq 0 \)).
\end{proposition}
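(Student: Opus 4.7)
I would integrate the equation twice, using $\varphi'(0)=0$ and $\varphi(0)=1$, to recast the problem as the fixed-point equation
\[
\varphi(r) = 1 - \int_0^r \phi_{p'}\!\left( t^{1-\theta}\int_0^t s^{\theta-1}\phi_p(\varphi(s))\,ds \right) dt, \qquad p' = \tfrac{p}{p-1}.
\]
For $\varphi$ close to $1$ on a small interval $[0,r_0]$, the inner integral is of order $t^\theta/\theta$ and the argument of $\phi_{p'}$ is of order $t/\theta$, so both $\phi_p$ and $\phi_{p'}$ are evaluated away from their singular arguments. The associated operator is therefore a contraction on a small ball of $C([0,r_0])$ and produces a unique $C^1$ local solution; the identity $r^{\theta-1}\phi_p(\varphi'(r)) = -\int_0^r s^{\theta-1}\phi_p(\varphi(s))\,ds$ built into the construction also gives the $C^1$ regularity of $r^{\theta-1}\phi_p(\varphi')$ at $r=0$.

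\textbf{Global extension via energy.} Testing the ODE against $\varphi'$, and using $\phi_p(\varphi)\varphi' = \frac{1}{p}(|\varphi|^p)'$ and $\varphi'\phi_p(\varphi')' = \frac{p-1}{p}(|\varphi'|^p)'$, one obtains
\[
\frac{d}{dr}\Big[(p-1)|\varphi'(r)|^p + |\varphi(r)|^p\Big] \,=\, -\,p\,\frac{\theta-1}{r}\,|\varphi'(r)|^p,
\]
which is non-positive since $\theta \geq 1$. Thus the energy $E(r) := (p-1)|\varphi'|^p + |\varphi|^p$ is non-increasing with $E(r) \leq E(0) = 1$, keeping $\varphi$ and $\varphi'$ uniformly bounded and allowing the local solution to be extended to all of $[0,+\infty)$.

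\textbf{Oscillation and simple zeros.} This is the main obstacle. I would introduce the $p$-trigonometric functions $(S_p,C_p)$ defined by $(\phi_p(S_p'))' + \phi_p(S_p) = 0$ with $(S_p,S_p')(0) = (0,1)$; these are periodic and satisfy $|S_p|^p + (p-1)|S_p'|^p \equiv p-1$. The associated half-linear Pr\"ufer transformation writes $\varphi(r) = \rho(r)\,C_p(\psi(r))$ together with a conjugate representation of $\varphi'$, and decouples the ODE so that the angle variable obeys an equation of the schematic form
\[
\psi'(r) \,=\, 1 + \frac{\theta-1}{r}\,g(\psi(r))
\]
for a bounded function $g$ (for $p=2$ a direct computation gives $g(\psi) = -\tfrac{1}{2}\sin(2\psi)$). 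Integrating this and using that the perturbation is $O(1/r)$ yields $\psi(r) \geq r - c\log r \to +\infty$, so $\varphi$ changes sign each time $\psi$ crosses a zero of $C_p$, producing infinitely many zeros. Finally, if $\varphi(z_0) = \varphi'(z_0) = 0$ at some $z_0 > 0$, a short Gronwall-type argument applied to the integral reformulation of the Cauchy problem at $z_0$ (where the coefficients of the ODE are smooth) forces $\varphi \equiv 0$ near $z_0$; propagating this identity by connectedness back to the origin contradicts $\varphi(0) = 1$, so every zero is simple.
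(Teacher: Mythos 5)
Your outline differs from the paper's in two places and has one genuine gap.

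\textbf{The gap: local existence/uniqueness near $r=0$.} You claim that near $r=0$ the argument of $\phi_{p'}$ is "of order $t/\theta$" and hence "evaluated away from its singular arguments." But $t/\theta \to 0$ as $t\to 0$, and $x=0$ is precisely where $\phi_{p'}$ fails to be Lipschitz for $p>2$ (its derivative is $(p'-1)|x|^{p'-2}\to\infty$). So a naive contraction estimate collapses. The paper avoids this by (i) using Schauder rather than Banach for existence, and (ii) for uniqueness, factoring $\phi_{p'}\bigl(t^{1-\theta}\int_0^t s^{\theta-1}\phi_p(\varphi)\,ds\bigr) = t^{p'-1}\phi_{p'}\bigl(\tfrac{1}{t^\theta}\int_0^t s^{\theta-1}\phi_p(\varphi)\,ds\bigr)$ and observing that the inner quantity converges to $\phi_p(\varphi(0))/\theta\neq 0$, so $\phi_{p'}$ \emph{is} applied on a region where it is locally Lipschitz. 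Without this (or an equivalent device), the uniqueness at $r=0$, and more generally at any point where $\varphi'$ vanishes, is not established. You will need this local uniqueness both to start the solution and to propagate it, so the gap is real.

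\textbf{Global extension.} Your energy identity $\frac{d}{dr}\bigl[(p-1)|\varphi'|^p+|\varphi|^p\bigr] = -p\,\frac{\theta-1}{r}\,|\varphi'|^p\le 0$ is correct and gives a clean a priori bound $|\varphi|,|\varphi'|\lesssim 1$, hence global continuation. This is a genuinely different and more elegant route than the paper's, which iterates the local existence construction with a uniform step size. Your approach buys a one-line continuation argument once local well-posedness is in hand.

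\textbf{Oscillation.} You use the half-linear Pr\"ufer transformation with generalized $p$-trigonometric functions, reducing oscillation to the unboundedness of the phase $\psi$. The paper instead performs the Riccati substitution $h(r)=r^{\theta-1}\phi_p(\varphi'/\varphi)$, derives the first-order equation $h'+(p-1)r^{-(\theta-1)(p'-1)}|h|^{p'}+r^{\theta-1}=0$, and reaches a contradiction via explicit integral estimates distinguishing $\theta<p$, $\theta>p$, $\theta=p$. Both are classical oscillation devices for half-linear equations. Your Pr\"ufer route is conceptually cleaner and avoids the case split, but it shifts nontrivial work into setting up $(S_p,C_p)$ and verifying the angle equation for general $p$ (the computation you give is only for $p=2$); that machinery should be made explicit or cited. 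The paper's Riccati route is more elementary and self-contained.

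\textbf{Simple zeros.} Your argument (propagate $\varphi\equiv 0$ from a double zero back to $r=0$ by uniqueness) is the same as the paper's and is fine, though it again relies on the uniqueness statement you need to repair above.
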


\begin{remark}\label{1: OSS: Osservazione Fondamentale Zeri di phi}
From the previous proposition, it follows that the zeros of the function \( \varphi \) are a countable quantity, and, denoting by \( \nu_k(p, \theta) \) the \( k \)-th zero, we have 
\[ 
0 < \nu_1(p, \theta) < \nu_2(p, \theta) < \ldots < \nu_k(p, \theta) < \ldots \to +\infty.
\]
\end{remark}

\begin{proposition}\label{1: THM: Teorema Fondamentale}
Let \(\theta \geq 1\). For a given \( R > 0 \), the solutions \( h \in C^1[0,R] \) with \((r^{\theta-1}\phi_p(h'))'\in C^1[0,R]\) of the eigenvalue problem 

\[
\text{\bf (P.1)}\quad
\begin{cases} (r^{\theta-1}\phi_p(h'))' + \lambda r^{\theta-1}\phi_p(h) = 0 & \text{in } [0,R] \\ h'(0) = 0, \, h(R) = 0 
\end{cases}
\] 

are precisely:

\[
\begin{cases}
\lambda_k = \left(\frac{\nu_k(p,\theta)}{R}\right)^p \\
h_k(r) = c\varphi\left(\frac{\nu_k(p,\theta)}{R} r\right)
\end{cases}
\]
\vspace{0.2cm}

where \( c \in \mathbb{R} \) and  \(\varphi\) is the only solution of problem {\bf(P)}.
\end{proposition}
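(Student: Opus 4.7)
The plan is to exploit the scale invariance of the ODE in \textbf{(P.1)} to reduce it to the initial value problem \textbf{(P)} of Proposition \ref{1: PROP: Esistenza Unicità e Oscillazioni}, and then read off both the admissible eigenvalues and the corresponding eigenfunctions from the zeros of $\varphi$. Concretely, I would first show that for every $\mu>0$ and $c\in\mathbb{R}$ the function $h(r)=c\,\varphi(\mu r)$ solves
\[
(r^{\theta-1}\phi_p(h'))'+\mu^p\, r^{\theta-1}\phi_p(h)=0,
\]
so that it is a candidate eigenfunction with eigenvalue $\lambda=\mu^p$. This reduces to a chain-rule computation that uses the multiplicativity $\phi_p(ab)=\phi_p(a)\phi_p(b)$ and the identity $\mu\,\phi_p(\mu)=\mu^p$; setting $s=\mu r$, the factor $\mu^{2-\theta}$ coming from differentiating $r^{\theta-1}$ and from the chain rule exactly cancels the factor $\mu^{\theta-1}$ arising when rewriting $s^{\theta-1}$ back in terms of $r$, leaving $\mu^p$.

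Next I would impose the boundary data. The initial condition $h'(0)=c\mu\,\varphi'(0)=0$ holds automatically since $\varphi'(0)=0$. The terminal condition $h(R)=c\,\varphi(\mu R)=0$ forces, for a nontrivial eigenfunction ($c\neq 0$), $\mu R$ to be a zero of $\varphi$; by Remark \ref{1: OSS: Osservazione Fondamentale Zeri di phi} this means $\mu R=\nu_k(p,\theta)$ for some $k\geq 1$, and hence $\lambda_k=\mu^p=(\nu_k(p,\theta)/R)^p$ with eigenfunction $h_k(r)=c\,\varphi(\nu_k(p,\theta)\,r/R)$. This proves the existence part of the claim.

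For the converse (completeness and uniqueness up to scalars), let $h\in C^1[0,R]$ with $(r^{\theta-1}\phi_p(h'))'\in C^1[0,R]$ be any nontrivial solution of \textbf{(P.1)} with eigenvalue $\lambda>0$. Assuming $h(0)\neq 0$, I would set $\mu=\lambda^{1/p}>0$ and $g(s)=h(s/\mu)/h(0)$; the scaling computation carried out above, read backwards, shows that $g$ satisfies the ODE of \textbf{(P)} with $g(0)=1$ and $g'(0)=0$. The uniqueness part of Proposition \ref{1: PROP: Esistenza Unicità e Oscillazioni} then forces $g\equiv \varphi$, whence $h(r)=h(0)\,\varphi(\mu r)$, and the condition $h(R)=0$ identifies $\mu R$ with some $\nu_k(p,\theta)$, giving exactly the list in the statement.

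The main obstacle is to exclude the possibility that a nontrivial eigenfunction has $h(0)=0$. Because $h'(0)=0$ is imposed, this amounts to an initial value problem uniqueness result at the degenerate point $r=0$, where both the weight $r^{\theta-1}$ vanishes and, for $p\neq 2$, $\phi_p$ fails to be Lipschitz at the origin. The argument follows the same pattern used in Appendix \ref{Appendice A} to prove the uniqueness in Proposition \ref{1: PROP: Esistenza Unicità e Oscillazioni}: integrate the equation twice against the weight, reducing it to a fixed-point formulation of the type
\[
h(r)=-\int_0^r s^{1-\theta}\phi_p^{-1}\!\Bigl(\lambda\int_0^s t^{\theta-1}\phi_p(h(t))\,dt\Bigr)\,ds,
\]
and apply a contraction/Gronwall argument on a small interval $[0,r_0]$ to conclude $h\equiv 0$ there; standard (non-degenerate) ODE theory then propagates $h\equiv 0$ to all of $[0,R]$. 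Once this IVP uniqueness is in hand, the classification of eigenpairs in Proposition \ref{1: THM: Teorema Fondamentale} is complete.
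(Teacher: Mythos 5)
Your argument follows essentially the same route as the paper's: verify by scaling that $c\,\varphi(\mu r)$ solves the equation with $\lambda=\mu^p$, then for the converse rescale a given eigenfunction to a solution of the IVP \textbf{(P)} and invoke uniqueness, having first excluded the possibility $h(0)=0$ through uniqueness of the (degenerate) initial value problem at $r=0$. Two small remarks. First, you quietly restrict to eigenvalues $\lambda>0$; since the statement claims to classify \emph{all} solutions of \textbf{(P.1)}, you should justify this (the paper does: multiply the equation by $h$, integrate by parts over $[0,R]$ using $h(R)=0$ and the vanishing of $r^{\theta-1}\phi_p(h')$ at $0$, obtaining $\lambda\int_0^R r^{\theta-1}|h|^p\,dr=\int_0^R r^{\theta-1}|h'|^p\,dr>0$ for nontrivial $h$). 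Second, for the $h(0)=0$ case the paper simply cites its existence-and-uniqueness Theorem from Appendix A, whose proof (subcases A.2/B.1) is precisely the fixed-point/contraction argument you sketch; you lose nothing by redoing it, but it is more economical to quote the theorem. Also note a cosmetic slip in your integral formulation: the factor $s^{1-\theta}$ should remain inside $\phi_p^{-1}$ (or equivalently become $s^{(1-\theta)/(p-1)}$ when pulled out), so the fixed-point map reads
\[
h(r)=-\int_0^r \phi_p^{-1}\!\Bigl(\lambda\,s^{1-\theta}\int_0^s t^{\theta-1}\phi_p(h(t))\,dt\Bigr)\,ds,
\]
but this does not affect the substance of the argument.
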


\begin{remark}\label{1: OSS: Unica Autofunzione Non Nulla}
It immediately follows from the definition that the function \( h_k \) has precisely \( k-1 \) zeros within the interval \( [0,R) \). In particular, the only solution of problem {\bf (P.1)} that never equals zero within \( [0,R) \) is \(\dyle h_1(r) = \varphi\left(\frac{\nu_1(p,\theta)}{R} r\right) \), up to a multiplicative constant.\\
\end{remark}

Let's fix a function \(d:\mathbb{R}^N\to [0,\infty)\) with the following properties: 

\begin{enumerate} 
\item \(d\in C(\mathbb{R}^N) \cap C^\infty(\mathbb{R}^N\setminus\{0\})\).
 
\item \(d(x)>0\) and \(d(x)=0\) if and only if \(x=0\).
 
\item \(d(\delta_\lambda(x))=\lambda d(x)\) (degree one homogeneity). 


\item Let \(Q\) denote the homogeneous dimension of \(\mathbb{R}^N\). Suppose that for every \(p\geq 2\), the following holds:

\[
\begin{cases}
\L_p d^{\frac{p-Q}{p-1}}=0 &\text{ in } \mathbb{R}^N\setminus\{0\} \text{ if } p\neq Q, \\
\L_Q(-\ln d)=0 &\text{ in } \mathbb{R}^N\setminus\{0\} \text{ if } p=Q.
\end{cases}
\]
\end{enumerate}

Define \(B^d_R\) as the set \(\label{NEW Gen: EQ: Definizione Palla}
B^d_R := \{x \in \mathbb{R}^N \,|\, d(x) \leq R\}. \)
Given a domain \(\Omega\) and a non-negative, locally integrable function \(\psi\), we define the space \( W^{1,p}_0(\Omega, \psi) \) as the closure of \( C^\infty_c(\Omega) \) with respect to the norm 
\begin{equation}\label{INTRO: EQ: Norma Sobolev}
\left(\int_\Omega |u|^p  |\nabla_\L d|^p\,\psi \, dx\right)^{\frac{1}{p}} + \left(\int_\Omega |\nabla_\L u|^p \psi \, dx\right)^{\frac{1}{p}}.
\end{equation}
We can now state our main result on Poincaré inequality.

\begin{theorem}[Weighted Poincaré inequality]\label{2: THM: Disuguaglianza di Poicnarè pesata}
Let \( p \geq 2 \), \(\al\geq 0\) and \( \theta \geq 1 \) be fixed. For every \( u \in W_0^{1,p}(B^d_R,|\nabla_\L d|^\al d^{\theta-Q }) \), the following inequalities hold:
\begin{equation}\label{2: EQ: THM: Weighted Poincaré Inequality}
\left(\frac{\nu_1(p,\theta)}{R}\right)^p\int_{B^d_R} \frac{|u|^p}{d^{Q-\theta}}|\nabla_\L d|^{\al+p}\,dx \leq \int_{B^d_R} \left|\nabla_\L u \cdot\frac{\nabla_\L d}{|\nabla_\L d|}\right|^p \frac{|\nabla_\L d|^\al}{d^{Q-\theta}}\,dx \leq \int_{B^d_R} \frac{|\nabla_\L u|^p}{d^{Q-\theta}}|\nabla_\L d |^\al\,dx \end{equation}
Moreover, the chain of inequalities is sharp since the function
\( u=\varphi\left(\frac{\nu_1(p,\theta)}{R}d\right) \in W_0^{1,p}(B^d_R,|\nabla_\L d|^\al d^{\theta-Q } ) \)
attains both equalities.
\end{theorem}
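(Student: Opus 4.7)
The rightmost inequality in \eqref{2: EQ: THM: Weighted Poincaré Inequality} is just the projection bound $|\nabla_\L u \cdot \nabla_\L d/|\nabla_\L d|| \le |\nabla_\L u|$, so the content is in the leftmost one. I attack it via a weighted Picone argument with the ground state $\psi(x) := \varphi\!\left(\frac{\nu_1(p,\theta)}{R}d(x)\right)=h(d(x))$, where $h$ is the positive first eigenfunction of \textbf{(P.1)} furnished by Proposition \ref{1: THM: Teorema Fondamentale} and Remark \ref{1: OSS: Unica Autofunzione Non Nulla}: $h>0$ on $[0,R)$, $h(R)=0$, and $h$ solves $(r^{\theta-1}\phi_p(h'))'+\lambda\,r^{\theta-1}\phi_p(h)=0$ with $\lambda=(\nu_1(p,\theta)/R)^p$. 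By hypothesis (1), $\psi\in C^\infty(B^d_R\setminus\{0\})$ and $\psi>0$ on $B^d_R\setminus\partial B^d_R$.

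\textbf{Step 1: key weighted PDE for $\psi$.} I first show that $\psi$ satisfies
\[
-\Div_\L\!\left(\frac{|\nabla_\L d|^\alpha}{d^{Q-\theta}}\,|\nabla_\L\psi|^{p-2}\nabla_\L\psi\right) \;=\; \lambda\,\frac{|\nabla_\L d|^{\alpha+p}}{d^{Q-\theta}}\,\phi_p(\psi) \qquad \text{on } B^d_R\setminus\{0\}.
\]
Since $\nabla_\L\psi=h'(d)\nabla_\L d$, the argument of $\Div_\L$ equals $\phi_p(h'(d))\,d^{\theta-Q}|\nabla_\L d|^{\alpha+p-2}\nabla_\L d$. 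Hypothesis (4), applied with exponent $\alpha+p\ge 2$ (power or logarithmic form according to whether $\alpha+p\neq Q$ or $\alpha+p=Q$) and rewritten via Leibniz, yields in both cases the identity $\Div_\L(|\nabla_\L d|^{\alpha+p-2}\nabla_\L d)=(Q-1)|\nabla_\L d|^{\alpha+p}/d$. Plugging this into a Leibniz expansion of the full divergence and using the ODE for $h$ in the form $(p-1)|h'|^{p-2}h''+(\theta-1)\phi_p(h')/r=-\lambda\phi_p(h)$, the desired PDE follows after the algebraic cancellation $(Q-1)+(\theta-Q)=\theta-1$.

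\textbf{Step 2: refined Picone inequality.} For $u\in C^\infty_c(B^d_R\setminus\{0\})$ let $w:=|u|$. The parallelism $\nabla_\L\psi\parallel\nabla_\L d$ gives the key simplification
\[
|\nabla_\L\psi|^{p-2}\nabla_\L\psi\cdot\nabla_\L w \;=\; \phi_p(h'(d))\,|\nabla_\L d|^{p-1}\,\Bigl(\nabla_\L w\cdot\tfrac{\nabla_\L d}{|\nabla_\L d|}\Bigr),
\]
so expanding $\nabla_\L(w^p/\psi^{p-1})=pw^{p-1}\psi^{1-p}\nabla_\L w-(p-1)w^p\psi^{-p}\nabla_\L\psi$, dotting with $|\nabla_\L\psi|^{p-2}\nabla_\L\psi$, and applying Young's inequality with exponents $(p,p/(p-1))$ to the cross term ($pab\le a^p+(p-1)b^{p/(p-1)}$ with $a=|\nabla_\L w\cdot\nabla_\L d/|\nabla_\L d||$ and $b=(w/\psi)^{p-1}|\nabla_\L\psi|^{p-1}$) delivers the refined pointwise bound
\[
|\nabla_\L\psi|^{p-2}\nabla_\L\psi\cdot\nabla_\L\!\left(\tfrac{w^p}{\psi^{p-1}}\right) \;\le\; \Bigl|\nabla_\L w\cdot\tfrac{\nabla_\L d}{|\nabla_\L d|}\Bigr|^p \;\le\; \Bigl|\nabla_\L u\cdot\tfrac{\nabla_\L d}{|\nabla_\L d|}\Bigr|^p,
\]
which is stronger than the classical Picone inequality precisely because $\nabla_\L\psi$ is purely radial in the $\nabla_\L d$ direction.

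\textbf{Step 3: integration, conclusion, and sharpness.} Multiplying Step 2 by $|\nabla_\L d|^\alpha/d^{Q-\theta}$, integrating over $B^d_R$, and applying the divergence theorem (no boundary term since $u$ has compact support away from $\{0\}$ and $\partial B^d_R$) transforms the left-hand side via Step 1 and the identity $\phi_p(\psi)/\psi^{p-1}\equiv 1$ into $\lambda\int_{B^d_R} |u|^p\,|\nabla_\L d|^{\alpha+p}/d^{Q-\theta}\,dx$, yielding the leftmost inequality. A density argument then extends it to all of $W^{1,p}_0(B^d_R,|\nabla_\L d|^\alpha d^{\theta-Q})$. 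For sharpness, $u=\psi$ is exactly the equality case of Young in Step 2 (since $|\nabla_\L w\cdot\nabla_\L d/|\nabla_\L d||/w=|\nabla_\L\psi|/\psi$ and the signs match), and $\nabla_\L u=h'(d)\nabla_\L d$ makes the right inequality trivially an equality. The main obstacle I foresee is the density argument itself, since $\psi$ vanishes on $\partial B^d_R$ and the weights and $\nabla_\L d$ may be singular near $0$: the proof needs cutoff exhaustions $\{\eta\le d\le R-\eta\}$ together with dominated/monotone convergence against the Sobolev norm \eqref{INTRO: EQ: Norma Sobolev} to justify passing to the limit in the integrals involving $w^p/\psi^{p-1}$.
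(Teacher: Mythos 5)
Your proposal takes a genuinely different route from the paper. The paper proves the leftmost inequality by applying the exact $L^p$ identity of Lemma \ref{2: Lemma: Proposizione identità Lp in LD} to the pair $f=\nabla_\L u\cdot\nabla_\L d/(d^{(Q-\theta)/p}|\nabla_\L d|^{(p-\alpha)/p})$ and $g=\frac{h'(d)}{h(d)}u|\nabla_\L d|^{(p+\alpha)/p}/d^{(Q-\theta)/p}$, computes the inner product $(|g|^{p-2}g,f)$ by integration by parts, and recognizes the resulting quantity as an eigenvalue problem for the 1D ODE \textbf{(P.1)}. This yields not just the inequality but the exact deficit identity (Theorem~\ref{2: THM : Identità di Poincarè radiale}), which is then recycled in the Hardy improvements. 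Your Picone-type argument replaces this with the pointwise Young inequality and the weighted eigenvalue equation for $\psi$; Steps 1 and 2 are correct (I verified the cancellation $(Q-1)+(\theta-Q)=\theta-1$ and the Young exponents). The Picone route is more elementary and self-contained, but it only gives the inequality and not the sharp remainder term, which the paper needs later.

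There is, however, a genuine gap in Step 3 as you have written it. You establish the inequality only for $u\in C^\infty_c(B^d_R\setminus\{0\})$ and then invoke ``a density argument'' to reach $W^{1,p}_0(B^d_R,|\nabla_\L d|^\alpha d^{\theta-Q})$, which by definition is the closure of $C^\infty_c(B^d_R)$, not of $C^\infty_c(B^d_R\setminus\{0\})$. The density of the smaller space in the larger one amounts to the origin having zero weighted $p$-capacity, and this can fail. A radial cutoff $\chi_\varepsilon=\chi(d/\varepsilon)$ contributes, via Lemma~\ref{NEW Gen: Lemma: Misura volume e superficie palla}, a gradient term of size $\varepsilon^{-p}\int_{\varepsilon\le d\le 2\varepsilon}|\nabla_\L d|^{\alpha+p}d^{\theta-Q}\,dx\sim\varepsilon^{\theta-p}$ to the Sobolev norm \eqref{INTRO: EQ: Norma Sobolev}, which blows up when $\theta<p$ — a case the theorem allows (e.g.\ $\theta=1$, $p=2$). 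So the cutoff exhaustion you suggest does not suffice in general. The gap is fixable within your own framework: take $u\in C^\infty_c(B^d_R)$ directly, integrate the Picone inequality over $B^d_R\setminus B^d_\varepsilon$, and observe that since $\phi_p(h'(d))<0$ on $(0,R)$ and $\nu_\L=-\nabla_\L d/|\nabla d|$ on $\{d=\varepsilon\}$, the boundary term $\int_{\{d=\varepsilon\}}\frac{w^p}{\psi^{p-1}}\frac{|\nabla_\L d|^\alpha}{d^{Q-\theta}}|\nabla_\L\psi|^{p-2}\nabla_\L\psi\cdot\nu_\L\,dH_{N-1}$ is \emph{nonnegative} and can simply be discarded (alternatively, one can show it tends to $0$ exactly as the paper does, using that $\varepsilon^{\theta-1}\phi_p(h'(\varepsilon))\to 0$ by Lemma~\ref{1: Lemma: Regolarità in 0} or the imposed condition $h'(0)=0$). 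Letting $\varepsilon\to 0$ by monotone convergence then proves the inequality on all of $C^\infty_c(B^d_R)$, and the standard density of $C^\infty_c(B^d_R)$ in the Sobolev space closes the argument without any capacity hypothesis.
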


\begin{remark}
The sharp constant of the weighted Poincaré inequality depends on \( \theta \), \( p \), and \( R \), but does not depend on the topological dimension \(N\) or on the homogeneous dimension \(Q\).
\end{remark}

\begin{remark}
In the case \(\theta = 1\) and \(p = 2\), the constants can be explicitly calculated. Specifically, we have
\[ \nu_1(p, 1) = (p-1)^{\frac{1}{p}} \frac{\pi}{p \sin{\frac{\pi}{p}}}, \quad \nu_1(2, \theta) = z_{\frac{\theta-2}{2}} \]

where \(z_{\frac{\theta-2}{2}}\) is the first zero of the Bessel function of the first kind \(J_{\frac{\theta-2}{2}}\). For more details, see Section \ref{SEC EXAMPLES}.\\
\end{remark}

The previous theorem is essential for establishing the following two types of Hardy improvements.

\begin{theorem}[Hardy Improvement of type I]\label{4: THM: Hardy Improvement of type I}
For \(p \geq 2\) there exists a constant \(\lambda_p \in \left[\frac{1}{2^p}, \frac{p}{2^p}\right] \) such that, for every \(\al\geq 0\), \(\theta \in \mathbb{R}\) and for every \(u \in C^\infty_c(B^d_R \setminus \{0\})\), we have:

\[
\int_{B^d_R} \frac{|\nabla_\L u|^p}{d^{p(\theta - 1)}}|\nabla_\L d|^\al dx \geq \int_{B^d_R} \left|\nabla_\L u \cdot \frac{\nabla_\L d}{|\nabla_\L d|}\right|^p\frac{|\nabla_\L d|^\al}{d^{p(\theta-1)}} dx
\]
\begin{equation}\label{4: EQ: THM: Hardy Improvement of Type I}
\geq \left|\frac{Q - p\theta}{p}\right|^p \int_{B^d_R} \frac{|u|^p}{d^{p\theta}}|\nabla_\L d|^{\al+p} dx + \lambda_p \frac{\nu_1(p,p)^p}{R^p} \int_{B^d_R} \frac{|u|^p}{d^{p(\theta - 1)}}|\nabla_\L d |^{\al+p} dx. 
\end{equation}
The constant \(\lambda_p\) can be characterized as 
\(\dyle\lambda_p = \min_{x \in [1/2, 1]} \left\{ x^p + (1 - x)^{p-1}(x + p - 1) \right\}.\) In particular, for \(p = 2\), \(\lambda_p =1 \).
\end{theorem}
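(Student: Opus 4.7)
The first inequality is immediate from the trivial pointwise bound $\left|\nabla_\L u \cdot \nabla_\L d/|\nabla_\L d|\right| \le |\nabla_\L u|$, so all the work lies in the second inequality. My plan is to reduce it to the weighted Poincaré inequality (the previous theorem) by (i) a substitution that turns the leading Hardy term into a divergence, (ii) a sharp pointwise scalar inequality (the content announced for Appendix \ref{Appendice B}), and (iii) the vanishing of the resulting cross term via the $\L_{p+\al}$--harmonicity of the fundamental solution $d^{(p+\al-Q)/(p+\al-1)}$.

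Set $\ga = (Q - p\th)/p$ and, for $u \in C^\infty_c(B^d_R \setminus \{0\})$, write $u = d^{-\ga}v$, so that $v = d^{\ga} u \in C^\infty_c(B^d_R \setminus \{0\})$. Decompose
\[
\nabla_\L u \cdot \frac{\nabla_\L d}{|\nabla_\L d|} = A + B, \qquad A = -\ga\, d^{-\ga-1}\,v\,|\nabla_\L d|, \qquad B = d^{-\ga}\,\nabla_\L v \cdot \frac{\nabla_\L d}{|\nabla_\L d|}.
\]
The backbone is the pointwise elementary inequality
\[
|A+B|^p \geq |A|^p + p\,\phi_p(A)\,B + \lambda_p\,|B|^p \qquad (p\ge 2,\; A, B \in \erre),
\]
with the very same $\lambda_p$ appearing in the statement. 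I would prove it by reducing (through scaling and the symmetry $(A,B) \mapsto (-A,-B)$) to the one--variable estimate
\[
F(y) := (1-y)^p - y^p + p\,y^{p-1} \geq \lambda_p \qquad \forall y \in [0,1],
\]
the complementary regions ($a \ge 0$ and $|a| > |b|$) being handled by direct comparison. A computation of $F'$ shows that its unique critical point in $(0,1)$ lies in $(0,1/2)$, and the change of variable $x = 1 - y$ rewrites $F(y)$ as $x^p + (1-x)^{p-1}(x + p - 1)$ with $x \in [1/2, 1]$, giving the characterisation of $\lambda_p$ asserted in the statement.

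Multiplying the pointwise inequality by $|\nabla_\L d|^\al / d^{p(\th-1)}$ and integrating over $B^d_R$ splits into three contributions. An exponent match, using $\ga p = Q - p\th$, gives
\[
|A|^p\,\frac{|\nabla_\L d|^\al}{d^{p(\th-1)}} = |\ga|^p\,\frac{|u|^p}{d^{p\th}}\,|\nabla_\L d|^{\al+p},
\]
which is exactly the first right-hand term of (\ref{4: EQ: THM: Hardy Improvement of Type I}). For the cross term, using $p\,\phi_p(v)\nabla_\L v = \nabla_\L(|v|^p)$ and integrating by parts produces
\[
\phi_p(\ga)\int_{B^d_R} |v|^p\,\Div_\L\!\left( d^{1-Q}\,|\nabla_\L d|^{p+\al-2}\,\nabla_\L d\right) dx,
\]
which vanishes identically: the hypothesis $\L_{p'} d^{(p'-Q)/(p'-1)} = 0$ applied with $p' = p + \al \ge 2$ (or $\L_Q(-\ln d) = 0$ when $p' = Q$) amounts, after a short calculation with $\beta' = (p'-Q)/(p'-1)$, to $\Div_\L(d^{1-Q}|\nabla_\L d|^{p'-2}\nabla_\L d) = 0$. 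Finally, the $\lambda_p|B|^p$ term becomes
\[
\lambda_p \int_{B^d_R} \left|\nabla_\L v \cdot \frac{\nabla_\L d}{|\nabla_\L d|}\right|^p \frac{|\nabla_\L d|^\al}{d^{Q-p}}\,dx,
\]
and the weighted Poincaré inequality applied with $p$ in place of $\th$ bounds this below by $\lambda_p\,(\nu_1(p,p)/R)^p\int_{B^d_R}|v|^p|\nabla_\L d|^{\al+p}/d^{Q-p}\,dx$; converting back to $u$ via $|v|^p = d^{\ga p}|u|^p$ and $\ga p = Q - p\th$ collapses the $d$-exponent to $p(\th-1)$, recovering the second right-hand term. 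Density of $C^\infty_c(B^d_R\setminus\{0\})$ then extends the estimate to the relevant Sobolev space.

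The principal obstacle is the sharp pointwise inequality: picking $\lambda_p$ and showing it is the largest universal constant requires a careful critical--point analysis of $F$ with a case split; once this is in hand, the rest is a bookkeeping of exponents, a single integration by parts (justified by the compact support of $u$ away from the origin and the harmonicity identity), and a direct appeal to the already established weighted Poincaré inequality.
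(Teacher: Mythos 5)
Your proposal is correct and follows essentially the same path as the paper, just repackaged. The paper chooses
\[
f = \frac{\nabla_\L u}{d^{\theta-1}}\cdot\frac{\nabla_\L d}{|\nabla_\L d|^{(p-\al)/p}}, \qquad g = -\frac{Q-p\theta}{p}\,\frac{u}{d^\theta}\,|\nabla_\L d|^{(p+\al)/p},
\]
invokes the $L^p$ Taylor identity of Lemma \ref{2: Lemma: Proposizione identità Lp in LD} to turn $\int|f|^p - \int|g|^p$ into $\|w(p,f,g)(f-g)\|_{L^2}^2$ (the cross term $(\phi_p(g),f-g)$ vanishing by the integration by parts encoded in Lemma \ref{NEW: LEMMA: p-Laplaciano di d}), then applies the lower bound $w^2(p,x,y)\geq\lambda_p|x-y|^{p-2}$ of Proposition \ref{STIME: PROP: STIMA DEL PRIMO TIPO} and finishes with the weighted Poincaré inequality at $\theta=p$. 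Your pointwise estimate $|A+B|^p \geq |A|^p + p\phi_p(A)B + \lambda_p|B|^p$ is exactly what one gets from that identity applied at a single point with $f=A+B$, $g=A$, combined with the lower bound; your substitution $v = d^\gamma u$ and the decomposition $\nabla_\L u\cdot\nabla_\L d/|\nabla_\L d| = A+B$ reproduce the paper's $f,g$ up to the common weight; and your cross-term computation via integration by parts is the ``simple calculation'' the paper elides. The one cosmetic variation is that you invoke the $\L_{p+\al}$-harmonicity of $d^{(p+\al-Q)/(p+\al-1)}$ directly to kill the divergence, whereas the paper derives the same vanishing in Lemma \ref{NEW: LEMMA: p-Laplaciano di d} from $\L_p d = (Q-1)|\nabla_\L d|^p/d$ together with $\nabla_\L|\nabla_\L d|\cdot\nabla_\L d = 0$; both are valid since property 4 of $d$ is assumed for all exponents $\geq 2$. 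Your sketch of the scalar minimization (reducing to $F(y) = (1-y)^p - y^p + py^{p-1}$ on $[0,1]$ and observing $F(y) = h(1-y)$ where $h(a) = a^p + (1-a)^{p-1}(a+p-1)$) matches Appendix \ref{Appendice B}, though the ``complementary regions handled by direct comparison'' clause introduces undefined symbols $a,b$ and would need to be filled in; the paper does this carefully via Lemmas \ref{STIME: LEMMA: 2: STIMA DEL PRIMO TIPO} and \ref{STIME: LEMMA: 3: STIMA DEL PRIMO TIPO}. The final density remark is superfluous since the theorem is stated only for $u\in C^\infty_c(B^d_R\setminus\{0\})$.
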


\begin{remark}
The constant in the last term on the right-hand side, \(\lambda_p \frac{\nu_1(p)^p}{R^p}\), depends exclusively on \(p\) and \(R\), and not on \(N\),\(Q\),\(\theta\) or \(\al\).
\end{remark}

\begin{theorem}[Hardy Improvement of Type II]\label{4: THM: Hardy Improvement of Type II}
Let \( p \geq 2 \), \(\al\geq 0\), \( \theta \in \mathbb{R} \), and \( z_0 \) be the first zero of the Bessel function \( J_0(r) \). Then, for every \( u \in C^\infty_c(B_R^d \setminus \{0\}) \), we have:

\[
\int_{B^d_R} \frac{|\nabla_\L u|^p}{d^{p(\theta - 1)}}|\nabla_\L d|^\al dx \geq \int_{B^d_R} \left|\nabla_\L u \cdot \frac{\nabla_\L d}{|\nabla_\L d|}\right|^p\frac{|\nabla_\L d|^\al}{d^{p(\theta-1)}} dx
\]
\begin{equation}\label{4: EQ: THM: Hardy improvement of type II}
\geq \left|\frac{Q - p\theta}{p}\right|^p \int_{B^d_R} \frac{|u|^p}{d^{p\theta}}|\nabla_\L d|^{\al+p} dx + \frac{2} {p} \left| \frac{Q-p\theta}{p} \right|^{p-2} \frac{z_0^2}{R^2} \int_{B^d_R} \frac{|u|^p}{d^{p\theta - 2}}|\nabla_\L d |^{\al+p} dx 
\end{equation}
\end{theorem}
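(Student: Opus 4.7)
The leftmost inequality is immediate by pointwise Cauchy--Schwarz applied to $\nabla_\L u$ projected onto $\nabla_\L d/|\nabla_\L d|$. For the nontrivial bound I use the standard Hardy change of variable $u = d^{-\gamma}w$ with $\gamma := (Q-p\theta)/p$, so that $w \in C^\infty_c(B^d_R\setminus\{0\})$ whenever $u$ is. Then
\[
\nabla_\L u \cdot \frac{\nabla_\L d}{|\nabla_\L d|} \;=\; A + B,\qquad A := -\gamma\,w\,|\nabla_\L d|\,d^{-\gamma-1},\ \ B := d^{-\gamma}\,\nabla_\L w \cdot \frac{\nabla_\L d}{|\nabla_\L d|},
\]
so the middle integrand in \eqref{4: EQ: THM: Hardy improvement of type II} equals $|A+B|^p$.

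The key algebraic tool is the scalar inequality, valid for $p \ge 2$ and all $A,B\in\mathbb{R}$,
\[
|A+B|^p \;\ge\; |A|^p + p\,|A|^{p-2}A B + \tfrac{p}{2}\,|A|^{p-2}B^2,
\]
equivalent (by dividing by $|A|^p$ and setting $t=B/A$) to $f(t) := |1+t|^p - 1 - pt - (p/2)t^2 \ge 0$. A short computation shows $f'(t) = p(1+t)(|1+t|^{p-2}-1)$, so the only critical points are $t \in \{0,-1,-2\}$ with $f(0) = f(-2) = 0$ and $f(-1) = p/2 - 1 \ge 0$; combined with $f(t)\to+\infty$ as $|t|\to\infty$, this yields $f\ge 0$. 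I expect this elementary lemma to be among those supplied in Appendix \ref{Appendice B}.

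Integrating the three resulting terms against $d^{-p(\theta-1)}|\nabla_\L d|^\al\,dx$, the $|A|^p$ piece produces exactly $|\gamma|^p\int |u|^p d^{-p\theta}|\nabla_\L d|^{\al+p}\,dx$, i.e.\ the Hardy term. The cross term reduces, up to the constant $-\gamma|\gamma|^{p-2}$, to
\[
\int \nabla_\L(|w|^p)\cdot\bigl(d^{1-Q}|\nabla_\L d|^{\al+p-2}\nabla_\L d\bigr)\,dx,
\]
which vanishes after integration by parts thanks to the hypothesis $\L_p d^{(p-Q)/(p-1)} = 0$, encoding $\Div_\L(d^{1-Q}|\nabla_\L d|^{p-2}\nabla_\L d)=0$ when $p\ne Q$. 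This is the mechanism introduced in \cite{LD}; the extra factor $|\nabla_\L d|^\al$ is exactly where the auxiliary result of Appendix \ref{Appendice B} is required.

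Finally, the leftover quadratic contribution
\[
\tfrac{p}{2}\,|\gamma|^{p-2}\!\int_{B^d_R} \frac{|w|^{p-2}\bigl|\nabla_\L w \cdot \nabla_\L d/|\nabla_\L d|\bigr|^2}{d^{Q-2}}\,|\nabla_\L d|^{\al+p-2}\,dx
\]
is estimated by applying Theorem \ref{2: THM: Disuguaglianza di Poicnarè pesata} with parameters $p=2$, $\theta=2$ and weight exponent $\al+p-2\ge 0$ to the test function $|w|^{p/2}$ (recall $\nu_1(2,2) = z_0$). Using $\bigl|\nabla_\L(|w|^{p/2})\cdot\nabla_\L d/|\nabla_\L d|\bigr|^2 = (p/2)^2|w|^{p-2}\bigl|\nabla_\L w\cdot\nabla_\L d/|\nabla_\L d|\bigr|^2$, this yields the lower bound $\tfrac{p}{2}|\gamma|^{p-2}\cdot\tfrac{4}{p^2}\tfrac{z_0^2}{R^2}\int |w|^p d^{2-Q}|\nabla_\L d|^{\al+p}\,dx$, which via $|w|^p = d^{\gamma p}|u|^p$ and $\gamma p = Q-p\theta$ equals exactly the claimed second term, with combined constant $\tfrac{p}{2}\cdot\tfrac{4}{p^2} = \tfrac{2}{p}$. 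The main technical hurdle is the cross-term integration by parts in the presence of the weight $|\nabla_\L d|^\al$ and the singularity of $d^{1-Q}$ at the origin, which is precisely the content of the Appendix B lemma.
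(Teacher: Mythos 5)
Your argument is correct and follows essentially the same route as the paper. Your pointwise inequality $|A+B|^p \ge |A|^p + p|A|^{p-2}AB + \tfrac{p}{2}|A|^{p-2}B^2$ is exactly the combination of the $L^p$ identity of Lemma~\ref{2: Lemma: Proposizione identità Lp in LD} with Proposition~\ref{STIME: PROP: STIMA DEL SECONDO TIPO} (take $f=A+B$, $g=A$ and rearrange), and the paper applies it with the same choice of $f,g$ (your $A,B$ are obtained from those by the substitution $u=d^{-\gamma}w$); the cross-term cancellation, the passage to $|\nabla_\L|w|^{p/2}\cdot\nabla_\L d/|\nabla_\L d||^2$, and the final appeal to Theorem~\ref{2: THM: Disuguaglianza di Poicnarè pesata} with $p=2$, $\theta=2$, $\nu_1(2,2)=z_0$ all match. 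Two minor corrections to your attributions: the identity $\Div_\L\bigl(d^{1-Q}|\nabla_\L d|^{\al+p-2}\nabla_\L d\bigr)=0$ that kills the cross term is Lemma~\ref{NEW: LEMMA: p-Laplaciano di d} in Section~\ref{SEC POINCARE}, not Appendix~\ref{Appendice B}; and Appendix~\ref{Appendice B} contains only the two scalar estimates — your direct critical-point proof of $|1+t|^p-1-pt-\tfrac{p}{2}t^2\ge 0$ is a clean alternative to the paper's route through Lemma~\ref{STIME: LEMMA: 1: STIMA DEL SECONDO TIPO}, and both are correct.
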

\vspace{0.3cm}

\begin{remark}
It's noteworthy that for \( p = 2 \), the Type I and Type II improvements and their respective constants coincide.
\end{remark}



\section{Weighted Poincaré inequality}\label{SEC POINCARE}
{\bf Some remarks on the function \(d\) and integration by parts.}\\
Let \( Z \) be a vector field and \( w \) be a function, both assumed to be sufficiently regular. The following identities are straightforward to verify:
\[\dyle \Div_\L(wZ)=-\nabla_\L^*(wZ)=\nabla_\L w\cdot Z+w\Div_\L Z\]
\[\dyle \int_\Omega \Div_\L(Z)\,dx=\int_\Omega\Div(\sigma^TZ)\,dx=\int_{\partial \Omega} \sigma^TZ\cdot \nu \,dH_{N-1}=\int_{\partial \Omega} Z\cdot \sigma\nu \,dH_{N-1}=\int_{\partial \Omega}Z\cdot\nu_\L\,dH_{N-1}.\]

Here, \(\nu_\L=\sigma\nu\) and \(\nu\) is the outward unit normal vector.

Some very useful consequences of the assumptions made on the function \(d\) are as follows:

\begin{lemma}\label{NEW: LEMMA: p-Laplaciano di d}
For every \(p \geq 2\) and \(\al\geq 0\), we have:

\[
\L_p d=\frac{(Q-1)|\nabla_\L d|^p}{d}, \quad \nabla_\L |\nabla_\L d|\cdot \nabla_\L d =0 \quad \text{and} \quad \Div_\L \left(\frac{|\nabla_\L d|^{\al+p-2}\nabla_\L d}{d^{Q-1}}\right)=0 \text{ in } \mathbb{R}^N\setminus\{0\}.
\]
\end{lemma}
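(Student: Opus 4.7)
The plan is to unpack the assumption $\L_p d^{(p-Q)/(p-1)}=0$ (resp.\ $\L_Q(-\ln d)=0$) into a single divergence identity, and then use it for two different values of $p$ to extract the three claims by algebraic manipulation.

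\textbf{Step 1: Reduce the hypothesis to a divergence-free field.} For $p\neq Q$, set $v = d^{(p-Q)/(p-1)}$ and put $\gamma=(p-Q)/(p-1)$. A direct chain-rule computation gives $\nabla_\L v=\gamma d^{\gamma-1}\nabla_\L d$, hence
\[
|\nabla_\L v|^{p-2}\nabla_\L v=|\gamma|^{p-2}\gamma\,d^{(\gamma-1)(p-1)}|\nabla_\L d|^{p-2}\nabla_\L d=|\gamma|^{p-2}\gamma\,\frac{|\nabla_\L d|^{p-2}\nabla_\L d}{d^{Q-1}},
\]
because $(\gamma-1)(p-1)=1-Q$. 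Since $\gamma\neq 0$, the assumption $\L_p v=0$ forces
\[
\Div_\L\!\left(\frac{|\nabla_\L d|^{p-2}\nabla_\L d}{d^{Q-1}}\right)=0\quad\text{in }\mathbb{R}^N\setminus\{0\}.
\]
For $p=Q$ the same identity follows directly from $|\nabla_\L(-\ln d)|^{p-2}\nabla_\L(-\ln d)=-|\nabla_\L d|^{p-2}\nabla_\L d/d^{Q-1}$. So the divergence identity holds for every $p\ge 2$; in particular it gives claim 3 in the special case $\alpha=0$.

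\textbf{Step 2: Derive the first identity.} Specializing the divergence identity to $p=2$ and expanding $\Div_\L(\nabla_\L d/d^{Q-1})=\L d/d^{Q-1}-(Q-1)|\nabla_\L d|^2/d^Q$ yields $\L d=(Q-1)|\nabla_\L d|^2/d$. Now apply the divergence identity for a generic $p\ge 2$ and expand it with the product rule:
\[
\frac{\L_p d}{d^{Q-1}}+(p-2)\frac{|\nabla_\L d|^{p-3}\nabla_\L|\nabla_\L d|\cdot\nabla_\L d}{d^{Q-1}}-(Q-1)\frac{|\nabla_\L d|^p}{d^Q}+\frac{|\nabla_\L d|^{p-2}(\L d-|\nabla_\L d|^2 (Q-1)/d)}{d^{Q-1}}=0,
\]
where I have split $\L_p d=|\nabla_\L d|^{p-2}\L d+(p-2)|\nabla_\L d|^{p-3}\nabla_\L|\nabla_\L d|\cdot\nabla_\L d$ and will feed in the formula for $\L d$ from the $p=2$ case. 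Plugging in that formula the terms involving $\L d$ cancel and the remaining equation reads
\[
(p-2)\,|\nabla_\L d|^{p-3}\,\nabla_\L|\nabla_\L d|\cdot\nabla_\L d=0.
\]
Picking any $p>2$ (e.g.\ $p=3$), this gives the orthogonality relation $\nabla_\L|\nabla_\L d|\cdot\nabla_\L d=0$ on the set where $\nabla_\L d\neq 0$; by continuity it holds everywhere on $\mathbb{R}^N\setminus\{0\}$. That is claim 2. Substituting it back into the expansion of $\L_p d$ immediately gives $\L_p d=|\nabla_\L d|^{p-2}\L d=(Q-1)|\nabla_\L d|^p/d$, which is claim 1.

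\textbf{Step 3: Promote the divergence identity to arbitrary $\alpha\ge 0$.} Using the Leibniz rule,
\[
\Div_\L\!\left(\frac{|\nabla_\L d|^{\alpha+p-2}\nabla_\L d}{d^{Q-1}}\right)
=|\nabla_\L d|^\alpha\Div_\L\!\left(\frac{|\nabla_\L d|^{p-2}\nabla_\L d}{d^{Q-1}}\right)+\alpha\frac{|\nabla_\L d|^{\alpha+p-3}}{d^{Q-1}}\,\nabla_\L|\nabla_\L d|\cdot\nabla_\L d.
\]
The first summand vanishes by Step 1 and the second by the orthogonality established in Step 2, proving claim 3 in general.

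\textbf{Main obstacle.} The delicate point is Step 2: one needs to combine the $p=2$ instance of the divergence identity with the general $p$ instance in just the right way to cancel all terms except the $\nabla_\L|\nabla_\L d|\cdot\nabla_\L d$ one; the degeneracy of $\nabla_\L d$ at points where it vanishes must be handled by continuity, assuming the smoothness of $d$ on $\mathbb{R}^N\setminus\{0\}$ stipulated in the standing hypotheses.
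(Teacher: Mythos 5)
Your argument is correct and follows essentially the same route as the paper: unfold the harmonicity hypothesis for $d^{(p-Q)/(p-1)}$ (resp.\ $-\ln d$) to get $\L_p d = (Q-1)|\nabla_\L d|^p/d$ for each $p\geq 2$, then compare the generic-$p$ formula with its $p=2$ instance via the product rule to isolate $(p-2)|\nabla_\L d|^{p-3}\nabla_\L|\nabla_\L d|\cdot\nabla_\L d$, and finally deduce the divergence identity from the first two claims. The only organizational differences are that you pass through the divergence-free flux field before expanding (the paper computes $\L_p d$ directly) and you spell out the Leibniz step for general $\alpha$ that the paper leaves implicit; a small presentational slip is the displayed equation in Step~2, which double-counts the term $(p-2)|\nabla_\L d|^{p-3}\nabla_\L|\nabla_\L d|\cdot\nabla_\L d$ and carries an extraneous summand, though your surrounding text and the conclusion drawn from it are correct.
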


\begin{proof}
The third identity follows directly from the first two. For the first result, if \(p \neq Q\), then by hypothesis, we have:

\[
\begin{aligned}
0 &= \L_p d^{\frac{p-Q}{p-1}} 
= -\nabla_\L^* \left( \left| \frac{p-Q}{p-1} \right|^{p-2} \left( \frac{p-Q}{p-1} \right) d^{1-Q} |\nabla_\L d|^{p-2} \nabla_\L d \right) \\
&= \left| \frac{p-Q}{p-1} \right|^{p-2} \left( \frac{p-Q}{p-1} \right) \left\{ (1-Q)d^{-Q}|\nabla_\L d|^p + d^{1-Q}\L_p d \right\}.
\end{aligned}
\]
\vspace{0.1cm}

Thus, the result follows for the case \(p \neq Q\). Similarly, if \(p = Q\), then:

\[
\begin{aligned}
0 &= \L_Q (-\ln d) 
= -\nabla_\L^* \left( -\frac{1}{d^{Q-1}}|\nabla_\L d|^{Q-2}\nabla_\L d \right) 
= \frac{Q-1}{d^Q} |\nabla_\L d|^Q - \frac{1}{d^{Q-1}} \L_Q d.
\end{aligned}
\]

Finally, for fixed \(2 < p \neq Q\), we have:

\[
\begin{aligned}
\frac{(Q-1)|\nabla_\L d|^p}{d}& = \L_p d = \Div_\L( |\nabla_\L d|^{p-2}\nabla_\L d) = (p-2)|\nabla_\L d|^{p-3}\nabla_\L |\nabla_\L d|\cdot \nabla_\L d + |\nabla_\L d|^{p-2}\L_2 d\\
& = (p-2)|\nabla_\L d|^{p-3}\nabla_\L |\nabla_\L d|\cdot \nabla_\L d + \frac{(Q-1)|\nabla_\L d|^p}{d}.
\end{aligned}
\]

From this, we can conclude that \(\nabla_\L |\nabla_\L d|\cdot \nabla_\L d = 0\).
\end{proof}

\begin{lemma}
The set \(\{x\in\mathbb{R}^N \,|\, d(x)\leq R\}\) is compact for every \(R>0\).
\end{lemma}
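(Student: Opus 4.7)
The plan is to show the set $B_R^d = \{d \leq R\}$ is closed and bounded, since it lives in $\mathbb{R}^N$.
Closedness is immediate from the continuity of $d$. For boundedness I will exploit the three structural properties of $d$: continuity, strict positivity off the origin, and degree-one homogeneity under $\delta_\lambda$.

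First I would introduce a "reference sphere": since $d$ is continuous and strictly positive on the Euclidean unit sphere $S^{N-1}$, which is compact, the quantity
\[
m := \min_{y \in S^{N-1}} d(y)
\]
exists and is strictly positive. Next, for each $x \neq 0$ I want to find a unique $\lambda(x) > 0$ so that $\delta_{1/\lambda(x)}(x) \in S^{N-1}$. This amounts to solving
\[
F(\lambda) := \sum_{j=1}^N \lambda^{-2\beta_j} x_j^2 = 1.
\]
Because the $\beta_j > 0$ and at least one $x_j \neq 0$, the function $F$ is continuous on $(0,\infty)$, strictly decreasing, with $F(\lambda) \to +\infty$ as $\lambda \to 0^+$ and $F(\lambda) \to 0$ as $\lambda \to +\infty$. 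By the intermediate value theorem a unique $\lambda(x)$ exists.

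Setting $y := \delta_{1/\lambda(x)}(x) \in S^{N-1}$, the homogeneity of $d$ gives
\[
d(x) = d(\delta_{\lambda(x)}(y)) = \lambda(x)\, d(y) \geq m\,\lambda(x).
\]
Hence if $d(x) \leq R$ then $\lambda(x) \leq R/m$. Writing $x = \delta_{\lambda(x)}(y)$ coordinate-wise and letting $\beta_* := \max_j \beta_j$, I estimate
\[
|x|^2 = \sum_{j=1}^N \lambda(x)^{2\beta_j} y_j^2 \leq \sum_{j=1}^N \lambda(x)^{2\beta_j} \leq N\,\max\!\left(1,\left(\tfrac{R}{m}\right)^{\!2\beta_*}\right),
\]
which is a bound depending only on $R$ (and on $d$ and the dilation). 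The origin is also in $B_R^d$, so combined with this estimate we conclude that $B_R^d$ is contained in a Euclidean ball of finite radius. Together with closedness, this proves compactness.

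The only delicate point is the uniqueness/existence of $\lambda(x)$: one must verify that the weights $\beta_j > 0$ really force $F$ to be strictly monotone so that $\delta_{1/\lambda}$ reaches $S^{N-1}$ exactly once. Everything else is a direct application of the hypotheses on $d$ and compactness of $S^{N-1}$.
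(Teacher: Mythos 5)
Your proof is correct, but it takes a genuinely different route from the paper's. The paper introduces an explicit \emph{anisotropic} homogeneous norm
\(\mathcal{N}(x)=\left(\sum_{j=1}^N |x_j|^{2/\beta_j}\right)^{1/2}\),
whose unit level set \(\{\mathcal{N}=1\}\) is compact and \emph{invariant} under the normalization \(\delta_{1/\mathcal{N}(x)}\) by construction. Comparing \(d\) with \(\mathcal{N}\) on that level set gives immediately the two-sided estimate \(c^{-1}\mathcal{N}\le d\le c\mathcal{N}\), and compactness of \(\{d\le R\}\) follows from compactness of the anisotropic ball. You instead work with the \emph{Euclidean} sphere \(S^{N-1}\), which is not invariant under \(\delta_\lambda\); this forces the extra step where you prove, via the intermediate value theorem, that every \(x\neq 0\) has a unique "radial coordinate" \(\lambda(x)\) with \(\delta_{1/\lambda(x)}(x)\in S^{N-1}\). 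That step is correct — \(F(\lambda)=\sum_j\lambda^{-2\beta_j}x_j^2\) is strictly decreasing precisely because all \(\beta_j>0\) and not all \(x_j\) vanish — and once you have \(\lambda(x)\) the rest of the estimate is routine. In terms of what each buys: the paper's adapted norm gives a cleaner, two-sided comparison in one line and avoids the IVT argument entirely; your version needs the explicit monotonicity check but stays closer to the Euclidean picture and produces only the one-sided bound you actually need for boundedness. Both are valid.
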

\begin{proof}
Let's define the function \(\mathcal{N}(x)=\left(\sum_{j=1}^N |x_j|^{\frac{2}{\beta_j}}\right)^{\frac{1}{2}}\). It's clear that \(\mathcal{N}(x)\) is continuous on \(\mathbb{R}^N\), always positive, and equals zero only at \(x=0\). Furthermore, it's homogeneous of degree one with respect to the dilation \(\delta_\lambda\) and it's immediate to verify that the set \(\{x\in\mathbb{R}^N \,|\, \mathcal{N}(x)=1\}\) is compact.\\
Let's denote \(c:= \max\{H,{1}/{h} \}\), where 

\[ 
H:=\sup\{d(x) \,|\, \mathcal{N}(x)=1\}, \quad h:=\inf\{d(x) \,|\, \mathcal{N}(x)=1\}.
\]

Thanks to the homogeneity of \(d(x)\) and \(\mathcal{N}(x)\) under \(\delta_\lambda\), we have

\[ 
c^{-1}\mathcal{N}(x) \leq d(x) \leq c \mathcal{N}(x) \quad \forall x\in \mathbb{R}^N.
\]
In particular, the set \(\{x\in\mathbb{R}^N \,|\, d(x)\leq R\}\) is compact for every \(R>0\) since the set \(\{x\in\mathbb{R}^N \,|\, \mathcal{N}(x)\leq R\}\) is compact.\\ We explicitly observe that \(H<\infty\) and \(h>0\) because \(\{x\in\mathbb{R}^N \,|\, \mathcal{N}(x)=1\}\) is a compact set contained in \(\mathbb{R}^N\setminus\{0\}\), and \(d(x)\) is a continuous and positive function on \(\mathbb{R}^N\setminus\{0\}\).
\end{proof}

\begin{lemma}\label{NEW Gen: Lemma: Misura volume e superficie palla}
Let \(\alpha \geq 0\) and define \(\lambda_\alpha:=\int_{d\leq 1} |\nabla_\L d |^\alpha \, dx\). Then, we have:

\[
\int_{\{d\leq R\}} |\nabla_\L d|^\alpha \, dx = \lambda_\alpha R^Q,\qquad
\int_{\{d=R\}} \frac{|\nabla_\L d|^\alpha}{|\nabla d|} \, dH_{N-1} = Q\lambda_\alpha R^{Q-1}.
\]
\end{lemma}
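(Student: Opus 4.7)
My plan is to use two tools: the anisotropic change of variables induced by $\delta_\lambda$, and the coarea formula. A key preliminary is that $|\nabla_\L d|$ is homogeneous of degree $0$ under $\delta_\lambda$: combining $d(\delta_\lambda x) = \lambda d(x)$ with the hypothesis $X_i(f \circ \delta_\lambda)(x) = \lambda (X_i f)(\delta_\lambda x)$ applied to $f = d$ yields $(X_i d) \circ \delta_\lambda = X_i d$ for every $i$, so $|\nabla_\L d|$ is constant along dilation orbits.

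For the first identity, I would change variables $y = \delta_R(x)$, whose Jacobian determinant is $R^{\beta_1 + \cdots + \beta_N} = R^Q$. The ball $\{d \leq 1\}$ maps diffeomorphically onto $\{d \leq R\}$, and $|\nabla_\L d|^\alpha$ is invariant under the substitution by the preliminary above, so immediately $\int_{\{d \leq R\}} |\nabla_\L d|^\alpha \, dx = R^Q \lambda_\alpha$.

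For the second identity, I would apply the coarea formula to the integral just computed. First, Euler's identity for the degree-one homogeneous function $d$, obtained by differentiating $d(\delta_\lambda x) = \lambda d(x)$ at $\lambda = 1$, gives $\sum_j \beta_j x_j \partial_{x_j} d(x) = d(x)$, which is strictly positive on $\mathbb{R}^N\setminus\{0\}$; in particular $|\nabla d|>0$ there, so the integrand $|\nabla_\L d|^\alpha / |\nabla d|$ is well-defined. The coarea formula then yields
\[
\lambda_\alpha R^Q \,=\, \int_{\{d \leq R\}} |\nabla_\L d|^\alpha \, dx \,=\, \int_0^R \int_{\{d=t\}} \frac{|\nabla_\L d|^\alpha}{|\nabla d|} \, dH_{N-1} \, dt.
\]
Differentiating both sides with respect to $R$ produces the stated surface identity with the factor $Q \lambda_\alpha R^{Q-1}$ on the right.

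The only subtle point is justifying differentiation under the integral sign, i.e.\ the continuity in $R$ of the map $R \mapsto \int_{\{d=R\}} |\nabla_\L d|^\alpha / |\nabla d| \, dH_{N-1}$. This can be handled either by appealing to Lebesgue differentiation (which gives the identity for a.e.\ $R$, sufficient for our applications), or, more satisfyingly, by a direct scaling argument that parametrizes $\{d = R\}$ as $\delta_R(\{d = 1\})$ and tracks how $dH_{N-1}$ and the Euclidean gradient $|\nabla d|$ transform under the anisotropic dilation; either way I do not expect serious difficulty.
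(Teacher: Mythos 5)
Your proposal matches the paper's proof: both use the anisotropic change of variables $y=\delta_R(x)$ together with the $0$-homogeneity of $|\nabla_\L d|$ for the volume identity, and then the coarea formula followed by differentiation in $R$ for the surface identity. You supply a bit more detail (the derivation of $0$-homogeneity from the hypotheses, Euler's identity to ensure $|\nabla d|>0$ away from the origin, and the remark about continuity in $R$), but the argument is the same.
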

We observe that \(\lambda_\alpha\) is well-defined since the set \(\{d \leq 1\}\) is compact and \(|\nabla_\L d|\) is a continuous function outside the origin that is \(0\)-\(\delta_\lambda\) homogeneous.

\begin{proof}
The second equation follows directly from the first one and the coarea formula by simply differentiating with respect to \(R\):

\[
\lambda_\alpha R^Q = \int_{d\leq R} |\nabla_\L d|^\alpha \, dx = \int_0^R \int_{d=s} \frac{|\nabla_\L d|^\alpha}{|\nabla d|} \, dH_{N-1} \, ds.
\]

For the first equation, a simple change of variables and the 0-homogeneity of \(|\nabla_\L d|\) gives:

\[
\int_{d\leq R}|\nabla_\L d|^\alpha \, dx = R^Q \int_{d\leq 1}|\nabla_\L d|^\alpha \, dx = \lambda_\alpha R^Q.
\]
\end{proof}

\vspace{0.3cm}

Using \(|E|\) to represent the Lebesgue measure of a set \(E\), it follows immediately from Lemma \refeq{NEW Gen: Lemma: Misura volume e superficie palla} that
\[ |B^d_R| = \lambda_0 R^Q \quad \text{and} \quad |\partial B^d_R| = Q\lambda_0 R^{Q-1}. \]

\begin{remark}\label{Remark smooth boundary}
We remark that \(\pa B^d_R\) is a smooth manifold of dimension \(N-1\). This holds true for almost every \(R>0\) according to Sard's lemma. The assertion then holds for  every \(R>0\) since \(\pa B_R^d\) is diffeomorphic to \(\pa B^d_1\) via the dilation \(\delta_\lambda\).
\end{remark}

\vspace{0.6cm}

We will frequently use the following lemma, which can be found in \cite{LD}.

\begin{lemma}[Proposition 2.1 \cite{LD}]
\label{2: Lemma: Proposizione identità Lp in LD}
Let \(D\subseteq\rN\) be a domain,
for every \(f,g\,\in L^p(D)\) and for every \(p\geq 2\) the following identity holds:
\begin{equation}\label{2: EQ: Idenità L^p In LD}
\norma{w(p,f,g)(f-g)}^2_{L^2(D)}
=\norma{f}^p_{L^p(D)}+(p-1)\norma{g}^p_{L^p(D)}
-p\left(\abs{g}^{p-2}g,\,f\right)_{L^2(D)}.
\end{equation}
The weight w is defined as
\begin{equation}\label{2: EQ: Definizione peso w Identità L^p In LD}
w(p,f,g)^2\coloneqq p(p-1)\int_0^1 
s\abs{sg+(1-s)f}^{p-2}\,ds.
\end{equation}
\end{lemma}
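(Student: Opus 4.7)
The plan is to reduce the claimed identity to a pointwise algebraic identity valid at every $x \in D$, which then yields the lemma by integration over $D$. Concretely, I would prove that for all real numbers $a,b$ and every $p \geq 2$,
\[
|a|^p + (p-1)|b|^p - p\,|b|^{p-2}b\,a \;=\; w(p,a,b)^2\,(a-b)^2,
\]
with $w$ as in the statement. Substituting $a = f(x)$, $b = g(x)$ and integrating in $x$ then produces exactly the $L^2$-$L^p$ identity of the lemma.

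To prove the pointwise identity I would introduce the auxiliary function $\psi \colon [0,1]\to\mathbb{R}$ given by $\psi(s):=|sb+(1-s)a|^p$. Since $p \geq 2$, the map $t \mapsto |t|^p$ is of class $C^2(\mathbb{R})$ with $\frac{d}{dt}|t|^p = p\,\phi_p(t)$ and $\frac{d^2}{dt^2}|t|^p = p(p-1)|t|^{p-2}$; thus $\psi\in C^2([0,1])$, with
\[
\psi(0)=|a|^p,\quad \psi(1)=|b|^p,\quad \psi'(1)=p|b|^{p-2}b(b-a),\quad \psi''(s)=p(p-1)|sb+(1-s)a|^{p-2}(b-a)^2.
\]
Taylor's formula with integral remainder centred at $s=1$ and evaluated at $s=0$ reads $\psi(0)=\psi(1)-\psi'(1)+\int_0^1 s\,\psi''(s)\,ds$. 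Substituting the four expressions above and rearranging immediately yields the pointwise identity.

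I do not anticipate any real obstacle: the argument is entirely elementary. The hypothesis $p \geq 2$ is precisely what keeps $|t|^{p-2}$ continuous at $t=0$ and makes $\psi$ twice continuously differentiable; integrability of both sides over $D$ for arbitrary $f,g \in L^p(D)$ follows from the elementary pointwise bound $|sb+(1-s)a|^{p-2} \leq C_p\bigl(|a|^{p-2}+|b|^{p-2}\bigr)$ combined with H\"older's inequality, which controls both the mixed term $\int_D |g|^{p-2}g\,f\,dx$ and the quantity $\int_D w(p,f,g)^2(f-g)^2\,dx$ by suitable products of $\|f\|_{L^p}$ and $\|g\|_{L^p}$.
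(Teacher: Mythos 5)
The paper does not actually prove this lemma; it is imported verbatim as Proposition~2.1 from the author's earlier work \cite{LD}, so there is no internal proof to compare against. That said, your argument is correct, self-contained, and quite likely the same mechanism used in \cite{LD}: Taylor's formula with integral remainder for $\psi(s)=|sb+(1-s)a|^p$ centred at $s=1$ and evaluated at $s=0$ gives precisely
\[
|a|^p=|b|^p-p|b|^{p-2}b(b-a)+p(p-1)(b-a)^2\int_0^1 s\,|sb+(1-s)a|^{p-2}\,ds,
\]
and rearranging (using $|b|^{p-2}b^2=|b|^p$) yields the pointwise identity; the hypothesis $p\geq 2$ is exactly what makes $t\mapsto|t|^p$ of class $C^2$, so $\psi''$ is continuous and the Taylor step is licit. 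Your integrability remark is also sound: $|sb+(1-s)a|\leq\max(|a|,|b|)$ gives $|sb+(1-s)a|^{p-2}\leq|a|^{p-2}+|b|^{p-2}$, and then Young/H\"older with exponents $\tfrac{p}{p-2}$, $\tfrac{p}{2}$ and $\tfrac{p}{p-1}$, $p$ control both $w^2(f-g)^2$ and $|g|^{p-2}g\,f$ by $\|f\|_{L^p}$ and $\|g\|_{L^p}$, so the integration over $D$ is justified for all $f,g\in L^p(D)$.
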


\vspace{0.5cm}

\begin{theorem}[Radial Poincaré Identity]\label{2: THM : Identità di Poincarè radiale}
Let \( p \geq 2 \), \( \theta \geq 1 \), \(\al\geq 0\) and let \( \varphi(\frac{\nu_1(p,\theta)}{R} r) \) denote the unique non-vanishing eigenfunction of problem {\bf (P.1)}. Given \(\dyle u \in C^\infty_c(B^d_R) \), define the functions:

\[
\begin{cases} 
\dyle f = \frac{\nabla_\L u}{d^{\frac{Q-\theta}{p}}} \cdot \frac{\nabla_\L d}{|\nabla_\L d|^{\frac{p-\al}{p}}} \\[15pt]
\dyle g = \frac{\nu_1(p,\theta)}{R}  \,\frac{\varphi'(\frac{\nu_1(p,\theta)}{R} d)}{\varphi(\frac{\nu_1(p,\theta)}{R} d)}  \,\frac{u}{d^{\frac{Q-\theta}{p}}}|\nabla_\L d|^{\frac{p+\al}{p}}
\end{cases}
\]

Then we have 

\begin{equation}\label{2: EQ: THM: Poincarè Identità}
\int_{B_R^d} \frac{|\nabla_\L d|^\al}{d^{Q-\theta}} \left\lvert\nabla_\L u \cdot \frac{\nabla_\L d}{|\nabla_\L d|}\right\rvert^p \, dx - \left(\frac{\nu_1(p,\theta)}{R}\right)^p \int_{B_R^d} \frac{|u|^p}{d^{Q-\theta}}|\nabla_\L d|^{p+\al} \, dx = \| w(p,f,g)(f-g) \|_{L^2(B_R^d)}^2.
\end{equation}
\end{theorem}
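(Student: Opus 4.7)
The plan is to apply Lemma \ref{2: Lemma: Proposizione identità Lp in LD} to the scalar functions $f$ and $g$ defined in the statement; that lemma immediately gives
\[
\|w(p,f,g)(f-g)\|_{L^2(B^d_R)}^2 = \|f\|_{L^p}^p + (p-1)\|g\|_{L^p}^p - p\!\int_{B^d_R} \phi_p(g)\,f\,dx.
\]
A direct algebraic expansion shows $|f|^p = |\nabla_\L d|^{\al}\,d^{\theta-Q}\,\bigl|\nabla_\L u\cdot\nabla_\L d/|\nabla_\L d|\bigr|^p$, so that $\|f\|_{L^p}^p$ coincides with the first integral in the claimed identity. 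Writing $\mu:=\nu_1(p,\theta)/R$ and $\psi(s):=\varphi'(s)/\varphi(s)$, the entire task then reduces to establishing
\[
p\!\int_{B^d_R} \phi_p(g)\,f\,dx \;=\; \mu^p\!\int_{B^d_R}\frac{|u|^p}{d^{Q-\theta}}|\nabla_\L d|^{p+\al}\,dx + (p-1)\|g\|_{L^p}^p,
\]
after which the two $(p-1)\|g\|_{L^p}^p$ contributions cancel and the theorem follows.

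To handle the cross term, I would use the chain rule $\phi_p(u)\nabla_\L u = \tfrac1p\nabla_\L(|u|^p)$ to recast it as $\mu^{p-1}\!\int \nabla_\L(|u|^p)\cdot Z\,dx$ with the horizontal vector field
\[
Z \;=\; \phi_p\!\bigl(\psi(\mu d)\bigr)\,\frac{|\nabla_\L d|^{p-2+\al}\,\nabla_\L d}{d^{Q-\theta}}.
\]
Integration by parts yields $-\mu^{p-1}\!\int |u|^p \Div_\L Z\,dx$: the boundary term on $\partial B^d_R$ vanishes since $u$ is compactly supported, while the contribution near the origin vanishes after excising a small ball $\{d\leq\varepsilon\}$ and passing to the limit, using $\varphi(0)=1$, $\varphi'(0)=0$. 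Factoring $d^{\theta-1}$ out of the denominator of $Z$ and then applying Lemma \ref{NEW: LEMMA: p-Laplaciano di d}, specifically the orthogonality $\nabla_\L|\nabla_\L d|\cdot\nabla_\L d=0$ and the divergence-free identity $\Div_\L\!\bigl(|\nabla_\L d|^{p-2+\al}\nabla_\L d/d^{Q-1}\bigr)=0$, gives
\[
\Div_\L Z \;=\; \frac{|\nabla_\L d|^{p+\al}}{d^{Q-\theta}}\left[\mu(p-1)|\psi(\mu d)|^{p-2}\psi'(\mu d) + \frac{\theta-1}{d}\phi_p\!\bigl(\psi(\mu d)\bigr)\right].
\]

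The last step, and the main obstacle, is the elimination of this bracket using the ODE satisfied by $\varphi$. Dividing problem $(\mathbf{P})$ by $\phi_p(\varphi)$ and substituting $\varphi''/\varphi = \psi'+\psi^2$ produces the Riccati-type relation
\[
(p-1)|\psi(s)|^{p-2}\psi'(s) \;=\; -1 - (p-1)|\psi(s)|^p - \frac{\theta-1}{s}\phi_p\!\bigl(\psi(s)\bigr).
\]
Evaluating at $s=\mu d$, multiplying by $\mu$, and substituting into the expression for $\Div_\L Z$ cancels the $\phi_p(\psi)$ contribution exactly and leaves $\Div_\L Z \;=\; -\,\mu\,|\nabla_\L d|^{p+\al}d^{\theta-Q}\bigl(1+(p-1)|\psi(\mu d)|^p\bigr)$. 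Recognising $\mu^p|\psi(\mu d)|^p|u|^p|\nabla_\L d|^{p+\al}/d^{Q-\theta}$ as the integrand of $|g|^p$, one recovers precisely the displayed identity for $p\!\int\phi_p(g)\,f\,dx$, and plugging it back into Lemma \ref{2: Lemma: Proposizione identità Lp in LD} completes the proof. The non-trivial content of the argument is therefore the synchronization of the intrinsic horizontal calculus (encoded in Lemma \ref{NEW: LEMMA: p-Laplaciano di d}) with the one-dimensional nonlinear ODE $(\mathbf{P})$; everything else is symbolic bookkeeping.
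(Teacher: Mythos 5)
Your proof is correct and follows essentially the same route as the paper: both start from Lemma~\ref{2: Lemma: Proposizione identità Lp in LD}, both compute the cross term $p\int\phi_p(g)f$ by passing $\phi_p(u)\nabla_\L u = \tfrac1p\nabla_\L|u|^p$ and integrating by parts against a horizontal field, both invoke Lemma~\ref{NEW: LEMMA: p-Laplaciano di d} (the $d$-radial divergence-free identity) to kill the angular part, and both use the ODE $\mathbf{(P)}$ to simplify what remains. The organizational difference is that you substitute $h=\varphi(\mu d)$ from the outset and use the Riccati form $(p-1)|\psi|^{p-2}\psi' = -1-(p-1)|\psi|^p-\tfrac{\theta-1}{s}\phi_p(\psi)$, whereas the paper keeps a general $h$ through the integration by parts, derives the intermediate formula with $(r^{\theta-1}\phi_p(h'))'/(r^{\theta-1}\phi_p(h))$, and only afterwards argues that $h=\varphi(\nu_1(p,\theta)d/R)$ is the right (and largest-$\lambda$) choice via Remark~\ref{1: OSS: Unica Autofunzione Non Nulla} and the uniqueness of $\mathbf{(P)}$. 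Since the theorem as stated already prescribes $g$ in terms of $\varphi$, your direct substitution is a modest streamlining; the paper's optimization detour is really motivation for why $\varphi$ appears rather than something the stated identity requires. You correctly identify both nontrivial verifications — the vanishing of the boundary term on $\{d=\varepsilon\}$ (controlled by $\varphi'(0)=0$, $\varphi(0)=1$ and the surface measure asymptotics from Lemma~\ref{NEW Gen: Lemma: Misura volume e superficie palla}) and the $L^p$-membership of $f,g$ — though, like the paper, you assert rather than spell out the latter.
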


\begin{proof}
Given \( u \in C^\infty_c(B^d_R) \), in identity (\ref{2: EQ: Idenità L^p In LD}), we choose the functions \( f \) and \( g \) as follows:

\[
\begin{cases} 
\dyle f = \frac{\nabla_\L u}{d^{\frac{Q-\theta}{p}}} \cdot \frac{\nabla_\L d}{|\nabla_\L d|^{\frac{p-\al}{p}}} \\[15pt]
\dyle g = \frac{h'(d)}{h(d)} \cdot \frac{u}{d^{\frac{Q-\theta}{p}}}|\nabla_\L d|^{\frac{p+\al}{p}},
\end{cases}
\]

with \( h: [0,R] \to \mathbb{R} \) to be chosen later. To ensure rigor in all calculations, we impose the following assumptions on \( h \):

\[
\begin{cases} 
h \in C^1[0,R] \\
h \neq 0 \text{ in } [0,R) \\
r^{\theta-1} \phi_p(h') \in C^1 [0,R] \\
h'(0) = 0
\end{cases}
\]

It is easy to verify that the functions \( f \) and \( g \) chosen in this way belong to \( L^p(B^d_R) \). Let's compute the inner product:

\[ 
\begin{split}
&
(|g|^{p-2}g,f)_{L^2(B^d_R)} = \int_{B^d_R} \frac{d^{(\theta-1)}\phi_p(h')}{\phi_p(h)} |u|^{p-2}u \nabla_\L u \cdot \frac{|\nabla_\L d|^{\al+p-2}\nabla_\L d}{d^{Q-1}}\,dx \\[10pt]
&= \frac{1}{p} \int_{B^d_R}  \frac{d^{(\theta-1)}\phi_p(h')}{\phi_p(h)} \nabla_\L |u|^p \cdot \frac{|\nabla_\L d|^{\al+p-2}\nabla_\L d}{d^{Q-1}}\,dx= \quad(\text{see lemma \ref{NEW: LEMMA: p-Laplaciano di d} }) \\[10pt]
&
= -\frac{1}{p}\int_{B^d_R} |u|^p \frac{\nabla_\L (d^{(\theta-1)}\phi_p(h'))}{\phi_p(h)} \cdot \frac{|\nabla_\L d|^{\al+p-2}\nabla_\L d}{d^{Q-1}}\,dx 
+ \frac{1}{p}\int_{B^d_R} |u|^p d^{(\theta-1)}\phi_p(h') \frac{\nabla_\L \phi_p(h)}{\phi_p(h)^2} \cdot \frac{|\nabla_\L d|^{\al+p-2}\nabla_\L d}{d^{Q-1}}\,dx \\[10pt]
&
= -\frac{1}{p}\int_{B^d_R} \frac{|u|^p}{d^{Q-1}} \left[ \frac{1}{\phi_p(h)} \frac{d}{dr}(r^{(\theta-1)}\phi_p(h')) \right]_{r=d}|\nabla_\L d|^{\al+p}dx 
+ \frac{1}{p}\int_{B^d_R} \frac{|u|^p}{d^{Q-1}} \left[ \frac{(p-1) r^{(\theta-1)}\phi_p(h')|h|^{p-2}h'}{|h|^{2p-2}} \right]_{r=d}|\nabla_\L d|^{\al+p}dx \\[10pt]
&
= \frac{1}{p}\int_{B^d_R} \frac{|u|^p}{d^{Q-\theta}} \left[ (p-1)\frac{|h'|^p}{|h|^p} - \frac{1}{r^{(\theta-1)}\phi_p(h)} \frac{d}{dr} (r^{(\theta-1)}\phi_p(h')) \right]_{r=d}|\nabla_\L d|^{\al+p}dx.
\end{split}
\]

We observe that, to rigorously justify integration by parts, we must integrate over \(B^d_R \setminus B^d_\varepsilon\) and then take the limit as \(\varepsilon \to 0^+\). This is necessitated by the singularity of the integrand at the origin. In doing so, we find the following boundary term:

\[
\begin{split}
&\int_{\partial B^d_\varepsilon} \frac{|u|^p d^{\theta-1}\phi_p(h')(d)}{\phi_p(h)(d)}\,\, \frac{|\nabla_\L d|^{\al+p-2}\nabla_\L d}{d^{Q-1}} \cdot \nu_\L \,d\sigma 
\quad\quad \left( \nu_\L = \sigma \frac{\nabla d}{|\nabla d|} = \frac{\nabla_\L d}{|\nabla d|}\right) \\[10pt]
&=
\frac{\varepsilon^{\theta-1}}{\varepsilon^{Q-1}} \frac{\phi_p(h')(\varepsilon)}{\phi_p(h)(\varepsilon)} \int_{d=\varepsilon} |u|^p \frac{|\nabla_\L d|^{\al+p}}{|\nabla d|} \,d\sigma \leq C_{p,\al} \frac{\varepsilon^{\theta-1}\phi_p(h')(\varepsilon)}{\phi_p(h)(\varepsilon)} \to 0 \quad \text{as} \quad \varepsilon \to 0^+.
\end{split}
\]

It's noteworthy that for \(\theta=1\), the condition \(h'(0)=0\) becomes necessary in this context, whereas for \(\theta > 1\), it is a direct consequence of lemma \ref{1: Lemma: Regolarità in 0}.

Returning to our choice of \( f \) and \( g \), we have

\[
||w(p,f,g)(f-g)||_{L^2(B^d_R)}^2 = \int_{B^d_R} \frac{|\nabla_\L d|^{\al}}{d^{Q-\theta}} \left\lvert \nabla_\L u \cdot \frac{\nabla_\L d}{|\nabla_\L d|} \right\rvert^p \, dx + \int_{B^d_R} \frac{|u|^p}{d^{Q-\theta}} \left( \frac{1}{r^{\theta-1}\phi_p(h)} \frac{d}{dr} (r^{\theta-1}\phi_p(h')) \right)_{r=d} |\nabla_\L d|^{\al+p} dx.
\]

If we choose the function \( h \) in such a way that 

\[
\frac{1}{r^{\theta-1}\phi_p(h)}  \left(r^{\theta-1}\phi_p(h')\right)' = -\lambda,
\]

for some \( \lambda > 0 \), then we would have 

\[
||w(p,f,g)(f-g)||_{L^2}^2 = \int_{B^d_R} \frac{|\nabla_\L d|^\al}{d^{Q-\theta}} \left\lvert \nabla_\L u \cdot \frac{\nabla_\L d}{|\nabla_\L d|} \right\rvert^p \, dx - \lambda \int_{B^d_R} \frac{|u|^p}{d^{Q-\theta}}|\nabla_\L d|^{\al+p} \, dx.
\]

Therefore, we aim to find the largest constant \( \lambda \) such that the problem 

\[
\text{\bf(E)} \quad 
\begin{cases}
(r^{\theta-1}\phi_p(h'))' + \lambda r^{\theta-1}\phi_p(h) = 0& \text{ in } [0,R] \\ h'(0) = 0 \\ h \neq 0& \text{ in } [0,R) \end{cases}
\]

admits a solution \( h \in C^1[0,R] \), \( r^{\theta-1}\phi_p(h') \in C^1[0,R] \).\\

According to remark \ref{1: OSS: Unica Autofunzione Non Nulla}, the pair \(\lambda = \left(\frac{{\nu_1(p,\theta)}}{R}\right)^p \) and \( h = \varphi\left(\frac{{\nu_1(p,\theta)}}{R} d\right) \) constitutes a nontrivial solution to the problem {\bf (E)} (the unique solution \( h \neq 0 \) satisfying the initial conditions \( h'(0) = 0 \) and \( h(R) = 0 \)). We aim to demonstrate that this indeed constitutes the desired solution.

Let's define \( \mu := h(R) < +\infty \). Our goal is to show that \( \mu = 0 \).\\

Suppose \( \mu \neq 0 \) (\( h(0) \neq 0 \), otherwise \( h \equiv 0 \)). The function \(\dyle \tilde{k}(r) = \frac{1}{h(0)} h\left(\frac{r}{\lambda^{1/p}}\right) \) solves 

\[
\begin{cases} 
(r^{\theta-1}\phi_p(\tilde{k}))'+r^{\theta-1}\phi_p(\tilde{k})=0 \text{ in } [0,\lambda^{1/p}R] \\ 
\tilde{k}'(0)=0, \quad \tilde{k}(0)=1 
\end{cases}
\]

By the uniqueness of problem {\bf (P)}, we obtain \(\dyle h(r) = h(0)\varphi(\lambda^{1/p}r) \). Since \( h \neq 0 \) in \( [0,R) \) and \( h(R) = \mu \neq 0 \), then \( h \neq 0 \) in \( [0,R] \). This translates to \(\dyle \varphi(\lambda^{1/p}r) \neq 0 \) in \( [0,R] \), implying: 

\[ 
\lambda < \left(\frac{{\nu_1(p,\theta)}}{R}\right)^p.
\]

This shows that the largest \(\lambda\) for which problem {\bf(E)} has a solution is obtained when \( \mu = 0 \). This concludes the proof of the Theorem.
\end{proof}

\begin{remark}\label{2: OSS: Altra Classe di Funzioni per THM ID POINCARè}
All calculations in the proof of Theorem \ref{2: THM : Identità di Poincarè radiale} still hold if \( u \) takes the form 

\[ 
u(x) = \varphi\left(\frac{\nu_1(p,\theta)}{R} d(x)\right) v(x),\quad \text{where } v \in C^1(\overline{B_R^d}).
\] 

Indeed, in this case, the functions \( f \) and \( g \) belong to \( L^p \), 
\[
\begin{cases} 
\dyle f= \frac{\nabla_\L u}{d^{\frac{N-\theta}{p}}}\cdot \frac{\nabla_\L d}{|\nabla_\L d|^{\frac{p-\al}{p}}} \in L^p(B^d_R)\\[15pt] 
\dyle g= \frac{\nu_1(p,\theta)}{R} \varphi'\left(\frac{\nu_1(p,\theta)}{R} d(x)\right)\, \frac{v(x)}{d^{\frac{N-\theta}{p}}}\,|\nabla_\L d|^{\frac{p+\al}{p}} \in L^p(B^d_R). \end{cases}
\]
Moreover, in the integration by parts, the boundary term remains zero. Therefore, for functions \( u(x) \) of this form, Poincaré's identity (\ref{2: EQ: THM: Poincarè Identità}) holds.
\end{remark}
\vspace{0.5cm}


\begin{proof}[Proof of Theorem \ref{2: THM: Disuguaglianza di Poicnarè pesata}]
If \( u \in C^\infty_c(B^d_R) \), then the chain of inequalities is an obvious consequence of identity (\ref{2: EQ: THM: Poincarè Identità}) and the Cauchy-Schwarz inequality. If \( u \in W_0^{1,p}(B^d_R,|\nabla_\L d|^\al d^{\theta-Q } ) \), then we can find a sequence \( u_n \in C^\infty_c(B^d_R) \) converging to \( u \) with respect to the norm (\ref{INTRO: EQ: Norma Sobolev}). By passing to the limit as \( n \to \infty \) in (\ref{2: EQ: THM: Weighted Poincaré Inequality}), the thesis follows. 

%

From remark \ref{2: OSS: Altra Classe di Funzioni per THM ID POINCARè}, we see that for the function 
\(u=\varphi\left(\frac{\nu_1(p,\theta)}{R} d\right) \)
the Poincaré identity (\ref{2: EQ: THM: Poincarè Identità}) holds. In particular, in this case \( f-g \equiv 0 \), thus

\begin{equation}\label{2: EQ: DIMO: Identità Varphi} \frac{\nu_1(p,\theta)^p}{R^p} \int_{B^d_R} \frac{|u|^p}{d^{Q-\theta}}|\nabla_\L d|^{\al+p}\, dx=\int_{B^d_R} \left|\nabla_\L u \cdot \frac{\nabla_\L d}{|\nabla_\L d|}\right|^p \frac{|\nabla_\L d|^\al}{d^{Q-\theta}}\,dx \quad \text{for } u=\varphi\left(\frac{\nu_1(p,\theta)}{R} d\right).
\end{equation} 

Since \( u=\varphi\left(\frac{\nu_1(p,\theta)}{R} d\right) \) is a \( C^1(\overline{B^d_R}) \) function that vanishes on the boundary and \(\partial B^d_R\) is regular (Remark \ref{Remark smooth boundary}), it easily follows that \(u \in W_0^{1,p}(B^d_R,|\nabla_\L d|^\al d^{\theta-Q })\). Furthermore, \( u \) is \(d\)-radial, \( |\nabla_\L u|^p=\left|\nabla_\L u \cdot \frac{\nabla_\L d}{|\nabla_\L d|}\right|^p \), this completes the proof of the theorem.
\end{proof}





\subsection{Some Special Cases}\label{SEC EXAMPLES}
In this section, we aim to highlight that the cases where \( \theta = 1 \) and \( p = 2 \) are fully solvable.

\subsubsection{The case \( \theta = 1 \).}

\begin{proposition}
For every \( p \geq 2 \), \(\al\geq 0\) and for every \( u \in W_0^{1,p}(B^d_R,|\nabla_\L d|^\al d^{1-Q } ) \), it holds:

\[
\frac{(p-1)}{R^p} \left( \frac{\pi}{p\sin{\frac{\pi}{p}}} \right)^p \int_{B^d_R} \frac{|u|^p}{d^{Q-1}}|\nabla_\L d|^{\al+p}\,dx \leq \int_{B^d_R} \left| \nabla_\L u \cdot\frac{ \nabla_\L d}{|\nabla_\L d|} \right|^p \frac{|\nabla_\L d|^\al}{d^{Q-1}}\, dx \leq \int_{B^d_R} \frac{|\nabla_\L u|^p}{d^{Q-1}}|\nabla_\L d|^\al\, dx 
\]

Moreover, the chain of inequalities is sharp.
\end{proposition}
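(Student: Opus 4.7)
The proposition is just the special case $\theta = 1$ of Theorem \ref{2: THM: Disuguaglianza di Poicnarè pesata}, so the entire content of the proof is to check the explicit formula $\nu_1(p,1) = (p-1)^{1/p}\pi/(p\sin(\pi/p))$. The plan is therefore: (i) cite Theorem \ref{2: THM: Disuguaglianza di Poicnarè pesata} applied to $\theta=1$ to obtain the chain of inequalities and the sharpness, with the constant on the left equal to $(\nu_1(p,1)/R)^p$; (ii) integrate the ODE of problem \textbf{(P)} for $\theta=1$ in closed form to identify $\nu_1(p,1)$.

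For step (ii), I would exploit the fact that when $\theta=1$ the equation $(\phi_p(\varphi'))' + \phi_p(\varphi) = 0$ is autonomous, so it admits the standard energy first integral. Multiplying by $\varphi'$ gives
\[
\frac{d}{dr}\!\left(\frac{p-1}{p}|\varphi'|^p + \frac{1}{p}|\varphi|^p\right) = 0,
\]
and using $\varphi(0)=1$, $\varphi'(0)=0$ one obtains $(p-1)|\varphi'|^p + |\varphi|^p = 1$. On the interval $[0,\nu_1(p,1)]$, Proposition \ref{1: PROP: Esistenza Unicità e Oscillazioni} guarantees that $\varphi$ is positive and decreasing, so
\[
\varphi'(r) = -\left(\frac{1-\varphi(r)^p}{p-1}\right)^{1/p}.
\]
Separating variables and integrating from $0$ to $\nu_1(p,1)$ yields
\[
\nu_1(p,1) = (p-1)^{1/p}\int_0^1 \frac{d\varphi}{(1-\varphi^p)^{1/p}}.
\]

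The last step is the Beta-function computation: substituting $t = \varphi^p$ converts the integral into
\[
\frac{1}{p}\int_0^1 t^{1/p-1}(1-t)^{-1/p}\,dt = \frac{1}{p}\,B\!\left(\tfrac{1}{p},\,1-\tfrac{1}{p}\right) = \frac{\Gamma(1/p)\,\Gamma(1-1/p)}{p} = \frac{\pi}{p\sin(\pi/p)},
\]
where the final equality is Euler's reflection formula. This yields $\nu_1(p,1) = (p-1)^{1/p}\pi/(p\sin(\pi/p))$, and inserting this value into the conclusion of Theorem \ref{2: THM: Disuguaglianza di Poicnarè pesata} gives exactly the stated inequality, together with sharpness realized by $u = \varphi(\nu_1(p,1)\,d/R)$. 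No serious obstacle is expected, since everything reduces to a classical explicit quadrature for the one-dimensional $p$-Laplacian eigenvalue problem; the only point requiring care is making sure that the initial conditions put us in the monotone branch so that the sign choice in separation of variables is unambiguous.
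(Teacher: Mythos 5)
Your proof is correct and follows essentially the same route as the paper: reduce to computing $\nu_1(p,1)$ via the energy first integral $(p-1)|\varphi'|^p+|\varphi|^p=1$, then evaluate the resulting quadrature with the Beta function and Euler's reflection formula. One small imprecision: Proposition \ref{1: PROP: Esistenza Unicità e Oscillazioni} does not itself assert that $\varphi$ is decreasing on $(0,\nu_1)$; the paper derives $\varphi'<0$ there by integrating the ODE from $0$ to $r$ and using $\varphi>0$ on $[0,\nu_1)$ (which follows from $\varphi(0)=1$ and the definition of $\nu_1$ as the first zero).
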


\begin{proof}
The proposition immediately follows if we demonstrate that the first zero \( \nu_1(p,1) \) of the function \( \varphi(r) \) is \(  (p-1)^{1/p} \frac{\pi}{p\sin{\frac{\pi}{p}}} \). Recall that in this case, \( \varphi(r) \) is defined as the unique solution of the problem:
 
\[
\begin{cases} 
(|\varphi'|^{p-2}\varphi')'+|\varphi|^{p-2}\varphi=0, & r \geq 0 \\ 
\varphi'(0)=0, \quad \varphi(0)=1 
\end{cases}
\]

For \( p=2 \), the problem is easily solvable, and the unique solution is \( \varphi(r)=\cos(r) \). As we aim to demonstrate, the first zero occurs at \(\dyle r=\frac{\pi}{2} \).\\
For the general case, let \( \nu_1(p,1)=\nu_1(p) \) denote the first zero of \( \varphi(r) \). By definition, \( \varphi(r)>0 \) in \( [0,\nu_1(p)) \).
Integrating from 0 to \( r \), we find:
\(\dyle |\varphi'|^{p-2}\varphi'(r)=-\int_0^r |\varphi|^{p-2}\varphi(s)\,ds < 0 \) for \( r \in (0,\nu_1(p)). \)
Summarizing, we have:

\[
\begin{cases} 
\varphi(r) > 0   &\text{in} \quad (0,\nu_1(p)) \\
\varphi'(r) < 0  &\text{in} \quad (0,\nu_1(p)) 
\end{cases}
\]

Multiplying the differential equation by \( \varphi' \), we get:
\(\dyle (|\varphi'|^{p-2}\varphi')'\varphi' +\frac{1}{p} (|\varphi|^p)'=0. \)
After some manipulation, this transforms into:

\[ 
\left( \frac{p-1}{p} |\varphi'|^p+ \frac{1}{p} |\varphi|^p \right)'=0. 
\]

Integrating from 0 to \( r \geq 0 \), we find:

\[ 
(p-1)|\varphi'(r)|^{p}+|\varphi(r)|^p=1 \quad \forall r \geq 0. 
\]

In particular, on the interval \( (0,\nu_1(p)) \), we have:

\[ \varphi'(r)=-\left[\frac{1-\varphi(r)^p}{p-1}\right]^{1/p}  \quad \text{for } r \in (0,\nu_1(p)). 
\]

We can conclude:
\[
\begin{aligned}
\nu_1(p) &= -\int_0^{\nu_1(p)} (-1)dx 
= -\int_1^0 \frac{(p-1)^{1/p}}{(1-\varphi^p)^{1/p}} d\varphi 
= (p-1)^{1/p} \int_0^1 \frac{1}{(1-\varphi^p)^{1/p}} d\varphi \quad (\text{let } t=\varphi^p) \\[10pt]
&= \frac{(p-1)^{1/p}}{p} \int_0^1 t^{\frac{1}{p}-1}(1-t)^{-\frac{1}{p}}dt 
= \frac{(p-1)^{1/p}}{p} B\left(\frac{1}{p},1-\frac{1}{p}\right) 
= (p-1)^{1/p}\frac{ \pi}{p\sin\left(\frac{\pi}{p}\right)}.
\end{aligned}
\]

\( B(x,y) \) denotes the Euler beta function, and, in the last identity, we used Euler's reflection formula
\[ B(x,1-x)=\frac{\pi}{\sin(\pi x)} \quad\text{ if } x \notin \mathbb{Z}. \]
This concludes the proof of the proposition.
\end{proof}

\subsubsection{The case \(p=2\).}
Choosing \( p=2 \) leads to another significant example. In this case, the function \( \varphi(r) \) is defined as the unique solution of the problem:

\[
\begin{cases} 
(r^{\theta-1}\varphi')'+r^{\theta-1}\varphi=0, & r\geq0 \\
\varphi(0)=1, \quad \varphi'(0)=0
\end{cases}
\]

A simple verification shows that the solution to this problem is:

\[ 
\varphi(r)=\Gamma\left(\theta/2\right)\sum_{m=0}^{\infty} \frac{(-1)^m}{m!\Gamma(m+\frac{\theta}{2})} \left(\frac{r}{2}\right)^{2m} = \Gamma\left(\theta/2\right) \left(\frac{2}{r}\right)^{\frac{\theta-2}{2}} J_{\frac{\theta-2}{2}}(r)
\]

Where \(\dyle J_{\frac{\theta-2}{2}} \) is the Bessel function of the first kind. Hence, the first zero of \( \varphi \), \( \nu_1(2,\theta) \), is simply the first zero of the function \(\dyle J_{\frac{\theta-2}{2}} \).\\

We can summarize in the following proposition.

\begin{proposition} 
Let \( \theta \geq 1 \), \(\al\geq 0\) and let \( z_{\frac{\theta-2}{2}} \) be the first zero of the Bessel function \(\dyle J_{\frac{\theta-2}{2}} \). For every \( u \in W_0^{1,2}(B^d_R,|\nabla_\L d|^\al d^{\theta-Q } ) \) it holds:
%
%
%
%
\[ 
\frac{z_{\frac{\theta-2}{2}}^2}{R^2} \int_{B^d_R} \frac{|u|^2}{d^{Q-\theta}}|\nabla_\L d|^{2+\al} \,dx \leq \int_{B^d_R} \left|\nabla_\L u \cdot \frac{\nabla_\L d}{|\nabla_\L d|}\right|^2 \frac{|\nabla_\L d|^\al}{d^{Q-\theta}} \,dx 
\leq \int_{B^d_R} \frac{|\nabla_\L u|^2}{d^{Q-\theta}}|\nabla_\L d |^\al \,dx
\]
Moreover, this chain of inequalities is sharp.

\end{proposition}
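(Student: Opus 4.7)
The plan is to obtain the result as an immediate consequence of Theorem \ref{2: THM: Disuguaglianza di Poicnarè pesata} (the general weighted Poincaré inequality) applied with $p=2$, once the constant $\nu_1(2,\theta)$ is identified with the first positive zero $z_{(\theta-2)/2}$ of the Bessel function $J_{(\theta-2)/2}$.

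First I would specialize Theorem \ref{2: THM: Disuguaglianza di Poicnarè pesata} to $p=2$: for every $u \in W_0^{1,2}(B^d_R, |\nabla_\L d|^\al d^{\theta-Q})$, the asserted chain of inequalities holds with sharp constant $(\nu_1(2,\theta)/R)^2$, and equality is attained by $u = \varphi(\nu_1(2,\theta)\, d/R)$. The entire task thus reduces to computing $\nu_1(2,\theta)$.

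Next I would reduce Problem \textbf{(P)} with $p=2$ to Bessel's equation. The ODE $(r^{\theta-1}\varphi')' + r^{\theta-1}\varphi = 0$ is equivalent, after dividing by $r^{\theta-1}$, to
\[
\varphi'' + \frac{\theta-1}{r}\varphi' + \varphi = 0.
\]
Setting $\nu := (\theta-2)/2$ and substituting $\varphi(r) = r^{-\nu}\psi(r)$, a direct computation shows that $\psi$ satisfies the Bessel equation $\psi'' + r^{-1}\psi' + (1 - \nu^2/r^2)\psi = 0$. The $C^1[0,+\infty)$ regularity required by Proposition \ref{1: PROP: Esistenza Unicità e Oscillazioni} at the origin rules out any $Y_\nu$ contribution, so $\psi(r) = c J_\nu(r)$. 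The normalization $\varphi(0)=1$, evaluated through the classical asymptotic $J_\nu(r) \sim (r/2)^\nu/\Gamma(\nu+1)$ as $r \to 0^+$, then fixes $c = 2^\nu\Gamma(\nu+1)$, giving the closed form $\varphi(r) = \Gamma(\theta/2)(2/r)^{(\theta-2)/2}J_{(\theta-2)/2}(r)$.

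Since $r^{-\nu}$ does not vanish on $(0,\infty)$, the first zero of $\varphi$ coincides with the first positive zero of $J_{(\theta-2)/2}$, that is, $\nu_1(2,\theta) = z_{(\theta-2)/2}$. Inserting this identity into Theorem \ref{2: THM: Disuguaglianza di Poicnarè pesata} yields both the inequality and the sharpness assertion, with extremizer $u = \varphi(z_{(\theta-2)/2}\, d/R)$. I do not anticipate any genuine obstacle here: the only delicate bookkeeping is the change of variable recasting the ODE in Bessel form, and checking that the initial data combined with the regularity selection at $r=0$ single out exactly the stated Bessel multiple.
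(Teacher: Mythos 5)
Your proposal is correct and follows essentially the same route as the paper: both reduce everything to identifying $\nu_1(2,\theta)$ with the first zero of $J_{(\theta-2)/2}$ and then quote Theorem \ref{2: THM: Disuguaglianza di Poicnarè pesata} with $p=2$. The paper simply writes down the closed form $\varphi(r)=\Gamma(\theta/2)(2/r)^{(\theta-2)/2}J_{(\theta-2)/2}(r)$ as a power series and leaves its verification to the reader, whereas you derive it via the substitution $\varphi=r^{-\nu}\psi$ into Bessel's equation and use the $C^1[0,\infty)$ regularity plus the initial data to select the $J_\nu$ multiple; note only that for $1\le\theta<2$ (non-integer $\nu\in[-1/2,0)$) the second independent solution to exclude is $J_{-\nu}$ rather than $Y_\nu$, and the exclusion comes from failure of $C^1$ regularity of $\varphi$ at $r=0$ (or of $\varphi'(0)=0$ when $\theta=1$) rather than from unboundedness of $\varphi$ itself, but this is a minor point of bookkeeping and the argument goes through.
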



\section{Improved Hardy inequality}\label{SEC Hardy IMPROVEMENT}

\begin{theorem}\label{4: THM: Stime dal basso}
Let \( D \subset \mathbb{R}^N \) be a domain, and let \( p \geq 2 \). Then:

\begin{itemize}
  \item \textbf{First Type Estimate.} \\
  There exists a constant \(  \lambda_p \in \left[\frac{1}{2^p}, \frac{p}{2^p}\right] \) such that for every \( f, g \in L^p(D) \):
  \begin{equation}\label{4: EQ: THM: Stima del primo tipo}
  ||f||^p_{L^p(D)} + (p-1) ||g||^p_{L^p(D)} - p (|g|^{p-2}g, f)_{L^2(D)} = ||w(p, f, g) (f-g)||^2_{L^2(D)} \geq \lambda_p ||f-g||^p_{L^p(D)}
  \end{equation}

  The constant \(\lambda_p\) can be characterized as 
  \(\dyle
  \lambda_p = \min_{x \in [1/2, 1]} \left\{ x^p + (1 - x)^{p-1}(x + p - 1) \right\}.
  \) 
  In particular, for \(p = 2\), \(\lambda_p = 1\).
 
  \item \textbf{Second Type Estimate.} \\
  For every \( f, g \in L^p(D) \):
  \begin{equation}\label{4: EQ: THM: Stima del secondo tipo}
  ||f||^p_{L^p(D)} + (p-1) ||g||^p_{L^p(D)} - p (|g|^{p-2}g, f)_{L^2(D)} = ||w(p, f, g) (f-g)||^2_{L^2(D)} \geq \frac{p}{2} \int_D |g|^{p-2} (f-g)^2 \, dx
  \end{equation}
\end{itemize}
\end{theorem}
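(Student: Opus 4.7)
Both inequalities in Theorem~\ref{4: THM: Stime dal basso} are integrated forms of pointwise inequalities. Indeed, the identity in Lemma~\ref{2: Lemma: Proposizione identità Lp in LD} encodes the pointwise identity $|f|^p+(p-1)|g|^p-p|g|^{p-2}gf = w(p,f,g)^2(f-g)^2$, so integrating any pointwise lower bound on
\[
\Phi(f,g):=|f|^p+(p-1)|g|^p-p\,|g|^{p-2}g\,f
\]
over $D$ immediately produces the corresponding $L^p$ estimate. Each side of the two desired inequalities is jointly $p$-homogeneous in $(f,g)$ and invariant under the sign-flip $(f,g)\mapsto(-f,-g)$, so I may normalize before arguing.

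\textbf{Second-type estimate.} Using $p$-homogeneity and the sign symmetry, I reduce to $g=1$ (the case $g=0$ being trivial), so the claim collapses to $\eta(t)\geq 0$ for
\[
\eta(t):=|t|^p+(p-1)-pt-\tfrac{p}{2}(t-1)^2,\qquad t\in\mathbb R.
\]
A direct computation gives $\eta'(t)=p\,t\,(|t|^{p-2}-1)$, whose only zeros are $t=0,\pm 1$. One checks $\eta(\pm 1)=0$ and $\eta(0)=p/2-1\geq 0$, and a sign analysis of $\eta'$ on each of $(-\infty,-1)$, $(-1,0)$, $(0,1)$, $(1,\infty)$ produces a monotonicity pattern showing that these are global minima. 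Hence $\eta\geq 0$ on $\mathbb R$, which is the pointwise version of \eqref{4: EQ: THM: Stima del secondo tipo}.

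\textbf{First-type estimate.} After the same homogeneity and sign reductions (normalizing $f-g=1$), I set $f=x$, $g=x-1$, and the inequality becomes $\psi(x)\geq\lambda_p$, where $\psi(x):=\Phi(x,x-1)$ specializes on $[0,1]$ to
\[
\psi(x)=x^p+(1-x)^{p-1}(x+p-1),
\]
matching the proposed characterization. The heart of the proof is to show that the global minimum of $\psi$ on $\mathbb R$ is attained on $[1/2,1]$. I verify this in three regions. On $[1,\infty)$, factor
\[
\psi'(x)=p\bigl[x^{p-1}-(x-1)^{p-2}(x+p-2)\bigr]
\]
and show the bracket is positive (a one-variable inequality that can be done by expansion), yielding $\psi(x)\geq\psi(1)=1$. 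On $(-\infty,0]$, use $\psi(x)=|x|^p+(1-x)^{p-1}(x+p-1)$ and distinguish the subcases $x\in[-(p-1),0]$ (where both terms are nonnegative and $(1-x)^{p-1}(x+p-1)\geq x+p-1$) and $x<-(p-1)$ (where $|x|^p$ dominates the negative contribution), obtaining $\psi(x)\geq p-1$. On $[0,1/2]$, a direct check of the sign of $\psi'$ gives that $\psi$ is decreasing, so $\psi(x)\geq\psi(1/2)=p/2^{p-1}$. All three lower bounds exceed $\psi(1)$ and $\psi(1/2)$, hence exceed $\lambda_p=\min_{[1/2,1]}\psi$, so the global minimum truly lies on $[1/2,1]$, yielding the characterization. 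The quantitative bracketing $\lambda_p\in[1/2^p,p/2^p]$ is read off by bounding $\min_{[1/2,1]}\{x^p+(1-x)^{p-1}(x+p-1)\}$ via the endpoint values, and for $p=2$ one checks directly that $x^2+(1-x)(x+1)\equiv 1$ on $[1/2,1]$, giving $\lambda_2=1$.

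\textbf{Main obstacle.} The delicate part is the localization of the minimum of $\psi$ to $[1/2,1]$. The region $(-\infty,0]$ is the most subtle because of the absolute value in $|x|^p$ interacting with the sign change of $x+p-1$ at $x=-(p-1)$; the positivity of $x^{p-1}-(x-1)^{p-2}(x+p-2)$ on $(1,\infty)$ is also an elementary but nonobvious one-variable fact that has to be established separately, for instance by writing $(x-1)^{p-2}=x^{p-2}(1-1/x)^{p-2}$ and expanding.
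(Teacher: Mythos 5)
Your overall strategy — reduce both estimates to pointwise scalar inequalities and then analyze a single real variable — is exactly the paper's: the theorem is deduced in the paper from Propositions~\ref{STIME: PROP: STIMA DEL PRIMO TIPO} and \ref{STIME: PROP: STIMA DEL SECONDO TIPO}, which are the pointwise versions. Your treatment of the second-type estimate is correct and matches the paper's Lemma~\ref{STIME: LEMMA: 1: STIMA DEL SECONDO TIPO} (your $\eta(t)\geq 0$ is precisely that lemma, split over $t\geq 0$ and $t<0$).

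For the first-type estimate, however, there is a genuine gap in your treatment of the region $(-\infty,0]$. With $\psi(x)=|x|^p+(1-x)^{p-1}(x+p-1)$ (this formula is valid for all $x\leq 1$), the two observations you cite for $x\in[-(p-1),0]$ — that both summands are nonnegative and that $(1-x)^{p-1}(x+p-1)\geq x+p-1$ — only yield $\psi(x)\geq |x|^p + x + (p-1)$. For $x\in(-1,0)$ one has $|x|^p+x<0$ (since $|x|^{p-1}<1$), so this does \emph{not} give $\psi(x)\geq p-1$. The stated conclusion is true, but your derivation does not establish it on $(-1,0)$. The paper sidesteps this entirely by not expanding $w^2$: since $\psi(x)=w^2(p,x,x-1)=p(p-1)\int_0^1 s\,|x-s|^{p-2}\,ds$, the integrand is monotone in $x$ once $x$ leaves $[0,1]$ (for $x\leq 0$, $|x-s|=s-x\geq s$), so the infimum over $\mathbb R$ is attained on $[0,1]$ immediately, and for $x\leq 0$ one gets $\psi(x)\geq p(p-1)\int_0^1 s^{p-1}\,ds = p-1$ in one line. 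If you wish to stick with the closed form, you need a different argument on $(-1,0)$; the integral representation is the clean way.

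Two smaller points. First, your derivative argument on $[0,1/2]$ (decreasing $\psi$) and the factorization on $[1,\infty)$ both require $p\geq 3$; for $p=2$ the derivative vanishes identically, so that case must be handled separately as you do — make this split explicit. Second, the bracketing: the endpoint value is $\psi(1/2)=p/2^{p-1}$, not $p/2^p$, so what you ``read off'' is $\lambda_p\leq p/2^{p-1}$. The theorem's stated interval $\left[\tfrac{1}{2^p},\tfrac{p}{2^p}\right]$ is a typo in the paper (it is inconsistent with $\lambda_2=1$); Proposition~\ref{STIME: PROP: STIMA DEL PRIMO TIPO} carries the correct interval $\left[\tfrac{1}{2^p},\tfrac{p}{2^{p-1}}\right]$, and your argument actually proves that one.
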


The theorem is an obvious corollary of (\ref{2: EQ: Idenità L^p In LD}) and the following two propositions, which are proven in Appendix \ref{Appendice B}.

\begin{proposition}\label{STIME: PROP: STIMA DEL PRIMO TIPO}
For any \(p \geq 2\), there exists a constant \(\lambda_p \in \left[\frac{1}{2^p}, \frac{p}{2^{p-1}}\right]\) such that, for any \(x, y \in \mathbb{R}\),
\[
w^2(p, x, y)
=
p(p-1) \int_0^1 s |sy + (1-s)x|^{p-2}\, ds
\geq
\lambda_p |x-y|^{p-2}.
\]
\end{proposition}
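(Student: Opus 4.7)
The plan is to reduce the two-variable inequality to a one-variable minimization by scaling and a sign symmetry, and then bound the resulting function below via explicit piecewise formulas and elementary calculus. Both sides of the desired inequality are positively homogeneous of degree $p-2$ in $(x,y)$, and $w^{2}(p,x,y)$ is invariant under $(x,y)\mapsto(-x,-y)$. Rescaling to $|x-y|=1$ and reflecting if necessary, it suffices to treat $(x,y)=(a,a+1)$ with $a\in\erre$. The substitution $u=sy+(1-s)x$ then gives
\[
\psi(a):=w^{2}(p,a,a+1)=p(p-1)\int_{a}^{a+1}(u-a)\,|u|^{p-2}\,du,
\]
so the proposition reduces to $\psi(a)\ge\la_{p}$ for every $a\in\erre$, with $\la_{p}:=\inf_{a\in\erre}\psi(a)$.

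Splitting the integral at $u=0$ one obtains a piecewise polynomial expression for $\psi$; the formula on the middle region is
\[
\psi(a)=(-a)^{p}+(a+1)^{p-1}(p-1-a)\quad\text{for }a\in[-1,0],
\]
with analogous closed forms on $[0,\infty)$ and on $(-\infty,-1]$ making $\psi\in C^{1}(\erre)$. Direct evaluation yields the key values $\psi(-1)=1$ and $\psi(-1/2)=p/2^{p-1}$.

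The central step is to show that $\psi$ attains its infimum inside $[-1,-1/2]$. On $[0,\infty)$, $\psi'(a)=p[a^{p-1}+(a+1)^{p-2}(p-2-a)]\ge 0$: the case $a\le p-2$ is immediate, while for $a\ge p-2$ the Bernoulli-type bound $(a+1)^{p-1}-a^{p-1}=(p-1)\int_{a}^{a+1}s^{p-2}\,ds\le(p-1)(a+1)^{p-2}$ rearranges exactly to $a^{p-1}\ge(a+1)^{p-2}(a+2-p)$. Analogously, with $w=-a\ge 1$, on $(-\infty,-1]$ the same Bernoulli bound gives $w^{p-1}\ge(w-1)^{p-2}(w+p-2)$, so $\psi$ is nonincreasing in $a$ and $\psi(a)\ge\psi(-1)=1$. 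The delicate region is $[-1/2,0]$, where the two terms of $\psi'$ carry opposite signs for $p>2$: setting $t=-a\in[0,1/2]$, $\psi'\ge 0$ becomes $(1-t)^{p-2}(p-2+t)\ge t^{p-1}$. Defining $h(t):=(1-t)^{p-2}(p-2+t)/t^{p-1}$, its logarithmic derivative $-\tfrac{p-2}{1-t}+\tfrac{1}{p-2+t}-\tfrac{p-1}{t}$ is nonpositive on $(0,1/2]$ for $p\ge 2$ (since $\tfrac{p-1}{t}\ge\tfrac{1}{t}\ge\tfrac{1}{p-2+t}$), so $h$ is decreasing and $h(t)\ge h(1/2)=2p-3\ge 1$. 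Combining these three monotonicity statements, $\psi(a)\ge\min_{a\in[-1,-1/2]}\psi(a)=:\la_{p}$.

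The prescribed bounds on $\la_{p}$ are then immediate: $\la_{p}\le\psi(-1/2)=p/2^{p-1}$ gives the upper bound, and on $[-1,-1/2]$ the middle-region formula yields $\psi(a)\ge(-a)^{p}\ge(1/2)^{p}=1/2^{p}$, the lower bound. The main obstacle is the monotonicity on $[-1/2,0]$; the logarithmic-derivative reformulation above handles it uniformly in $p\ge 2$.
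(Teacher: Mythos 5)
Your proof is correct and follows essentially the same strategy as the paper: both reduce the two--variable inequality to minimizing a one--variable function, derive an explicit piecewise polynomial expression, localize the minimum to a length-$1/2$ subinterval by a monotonicity argument, and then read off the two bounds directly from the explicit formula. Concretely, your $\psi(a)$ on $[-1,0]$ is the paper's $h(-a)$, $\psi(-1)=1$ matches the paper's limit $g(z)\to 1$ as $|z|\to\infty$, and your localization to $[-1,-1/2]$ corresponds to the paper's Lemma~\ref{STIME: LEMMA: 3: STIMA DEL PRIMO TIPO}. Where you genuinely diverge is in the details of the reduction and the monotonicity proofs. You normalize by $|x-y|=1$ directly and use the reflection invariance, while the paper divides by $|y|^{p-2}$ and passes through $z=x/y$ and then $a=z/(z-1)$. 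More substantively, the paper dispatches the restriction to $a\in[0,1]$ with the phrase ``for obvious geometric reasons,'' whereas you prove the corresponding monotonicity of $\psi$ on $[0,\infty)$ and on $(-\infty,-1]$ explicitly via a Bernoulli-type bound on $(a+1)^{p-1}-a^{p-1}$; this is a nice tightening. For the delicate middle interval, the paper bounds $h'(a)=pa^{p-1}+pk(a)$ from above using an auxiliary Lemma~\ref{STIME: LEMMA: 1: STIMA DEL PRIMO TIPO} for $p\geq 3$ and handles $p=2$ separately, while you rewrite the sign condition as $h(t)=(1-t)^{p-2}(p-2+t)/t^{p-1}\geq 1$ and prove $h$ decreasing by a logarithmic-derivative estimate that works uniformly for all $p\geq 2$. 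Both approaches yield $\lambda_p\in[1/2^p,\,p/2^{p-1}]$ with the same boundary evaluations $\psi(-1/2)=p/2^{p-1}$ and $\psi(a)\geq(-a)^p\geq 1/2^p$ on $[-1,-1/2]$.
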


\begin{proposition}\label{STIME: PROP: STIMA DEL SECONDO TIPO}
For any \(p \geq 2\) and for any \(x, y \in \mathbb{R}\),
\[
w^2(p, x, y)
=
p(p-1) \int_0^1 s |sy + (1-s)x|^{p-2}\, ds
\geq
\frac{p}{2} |y|^{p-2}.
\]
\end{proposition}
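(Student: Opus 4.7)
The plan is to recast $w^2(p,x,y)$ in closed form via Taylor's theorem and then reduce the inequality to a one-variable estimate that falls out of Young's inequality.

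First, I would apply Taylor's formula with integral remainder to $F(s) := |sy+(1-s)x|^p$, which lies in $C^2([0,1])$ for $p\geq 2$ (since $|t|^{p-2}$ extends continuously to $t=0$), with $F''(s) = p(p-1)(y-x)^2|sy+(1-s)x|^{p-2}$. Expanding $F(0)$ about $s=1$ gives $|x|^p = F(1) - F'(1) + \int_0^1 s F''(s)\,ds$, and, after handling separately the trivial case $x=y$, this rearranges into the identity
\[
w^2(p,x,y) \;=\; \frac{|x|^p - |y|^p + p|y|^{p-2}y(y-x)}{(y-x)^2}.
\]
The claim therefore becomes $|x|^p - |y|^p + p|y|^{p-2}y(y-x) \geq \frac{p}{2}|y|^{p-2}(y-x)^2$.

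Next, since both sides are homogeneous of degree $p$ in $(x,y)$ and invariant under $(x,y)\mapsto(-x,-y)$, I would restrict to $y>0$ (with $y=0$ trivial). Setting $r := x/y$ and dividing through by $y^p$ collapses the statement to the one-variable inequality
\[
\psi(r) \;:=\; |r|^p - \frac{p}{2}r^2 + \frac{p-2}{2} \;\geq\; 0 \qquad (r\in\mathbb{R}).
\]
I expect this to come out of Young's inequality applied to $1\cdot|r|^2$ with conjugate exponents $\frac{p}{p-2}$ and $\frac{p}{2}$, which yields $|r|^2 \leq \frac{p-2}{p} + \frac{2}{p}|r|^p$; multiplying by $p/2$ is precisely $\psi(r)\geq 0$. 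Equality holds iff $|r|=1$, matching the equality case $x=-y$ that one can check directly in the original integral.

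I do not anticipate any serious obstacle: the Taylor expansion is clean for $p\geq 2$ because the integrand $|sy+(1-s)x|^{p-2}$ is continuous (vanishing at its zero when $p>2$, and constant when $p=2$), and the reduction to the convex one-variable inequality is automatic. Conceptually this just formalises the ``Bregman divergence'' interpretation of $w^2$ attached to the convex function $t\mapsto|t|^p/p$, which is the underlying reason Proposition \ref{STIME: PROP: STIMA DEL SECONDO TIPO} ought to hold with the sharp constant $p/2$.
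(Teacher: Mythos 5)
Your proof is correct, and it reaches the same closed-form quantity as the paper by a somewhat different and more unified route. The paper first normalizes by dividing by $|y|^{p-2}$, then computes the integral explicitly via the substitution $t = x + (1-x)s$ and has to split into the cases $x\geq 0$ and $x<0$ (because the integrand $|t|^{p-2}$ changes analytic form across $0$), arriving at $\frac{x^p-px+(p-1)}{(1-x)^2}$ and $\frac{z^p+pz+(p-1)}{(1+z)^2}$ respectively; it then proves the two one-variable estimates $x^p-px+(p-1)\geq (p-1)(1-x)^2$ and $x^p+px+(p-1)\geq\frac{p}{2}(1+x)^2$ by elementary calculus (factoring and locating the minimum of $x^{p-1}-(p-1)x+(p-2)$ at $x=1$). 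Your Taylor-with-integral-remainder derivation of
\[
w^2(p,x,y) = \frac{|x|^p - |y|^p + p|y|^{p-2}y(y-x)}{(y-x)^2}
\]
produces both of the paper's closed forms at once and makes the Bregman-divergence structure explicit, while Young's inequality with exponents $\frac{p}{p-2}$ and $\frac{p}{2}$ delivers the resulting one-variable bound $\psi(r)=|r|^p-\frac{p}{2}r^2+\frac{p-2}{2}\geq 0$ in a single stroke, with the equality case $|r|=1$ falling out automatically. The two proofs are equivalent in substance — after dividing the paper's two lemma inequalities by $(1-x)^2$ and $(1+x)^2$ both reduce exactly to $\psi\geq 0$ — but yours avoids the sign case-split and replaces the ad hoc derivative analysis with a standard convexity tool, which is arguably the cleaner presentation.
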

\vspace{0.5cm}


\begin{proof}[Proof of Theorem \ref{4: THM: Hardy Improvement of type I}]
A simple calculation in the spirit of \cite{LD} shows that if we choose in (\ref{2: EQ: Idenità L^p In LD}):

\[
\begin{cases}
\dyle
f = \frac{\nabla_\L u}{d^{\theta-1}}\cdot \frac{\nabla_\L d}{|\nabla_\L d|^{\frac{p-\al}{p}}} \in L^p(B_R^d)\\[15pt] 
\dyle
g = -\left(\frac{Q - p\theta}{p}\right)\, \frac{u}{d^\theta}|\nabla_\L d|^{\frac{p+\al}{p}} \in L^p(B_R^d) 
\end{cases}
\]

then we have:

\[ 
\int_{B_R^d} \left|\nabla_\L u \cdot \frac{\nabla_\L d}{|\nabla_\L d|}\right|^p\frac{|\nabla_\L d|^\al}{d^{p(\theta-1)}} dx - \left| \frac{Q - p\theta}{p} \right|^p \int_{B_R^d} \frac{|u|^p}{d^{p\theta}}|\nabla_\L d|^{\al+p} dx = \left\| w(p,f,g)(f-g) \right\|_{L^2(B_R^d)}^2. 
\]

Using the lower bounds provided by (\ref{4: EQ: THM: Stima del primo tipo}), we have:
\[
\begin{split}
&\left\| w(p,f,g)(f-g) \right\|_{L^2} \geq \lambda_p \int_{B_R^d} \left| \frac{\nabla_\L u}{d^{\theta-1}}\cdot \frac{\nabla_\L d}{|\nabla_\L d|^{\frac{p-\al}{p}}} + \left(\frac{Q - p\theta}{p}\right)\, \frac{u}{d^\theta}|\nabla_\L d|^{\frac{p+\al}{p}} \right|^p dx =\\
\end{split}
\]
\[
=\lambda_p \int_{B_R^d} \frac{|\nabla_\L d|^\al}{d^{Q-p}} \left| \nabla_\L \left(u d^{\frac{Q-p\theta}{p}}\right) \cdot \frac{\nabla_\L d}{| \nabla_\L d|} \right|^p dx.
\]

Now, we can use Poincaré's inequality (\ref{2: EQ: THM: Weighted Poincaré Inequality}), obtaining

\[ 
\left\| w(p,f,g)(f-g) \right\|_{L^2}
\geq \lambda_p \frac{\nu_1(p,p)^p}{R^p} \int_{B_R^d} \frac{|u|^p d^{Q-p \theta}}{d^{Q-p}}|\nabla_\L d|^{\al+p} dx = \lambda_p \frac{\nu_1(p,p)^p}{R^p} \int_{B_R^d} \frac{|u|^p}{d^{p(\theta-1)}} |\nabla_\L d|^{\al+p}dx. 
\]

This completes the proof of the theorem.
\end{proof}


\begin{proof}[Proof of Theorem \ref{4: THM: Hardy Improvement of Type II}]
Let's proceed as in the proof of Theorem \ref{4: THM: Hardy Improvement of type I}. With the choice 

\[
\begin{cases}
\dyle
f = \frac{\nabla_\L u}{d^{\theta-1}}\cdot \frac{\nabla_\L d}{|\nabla_\L d|^{\frac{p-\al}{p}}} \in L^p(B_R^d)\\[15pt] 
\dyle
g = -\left(\frac{Q - p\theta}{p}\right)\, \frac{u}{d^\theta}|\nabla_\L d|^{\frac{p+\al}{p}} \in L^p(B_R^d) 
\end{cases}
\]

in (\ref{2: EQ: Idenità L^p In LD}) and from (\ref{4: EQ: THM: Stima del secondo tipo}), we have 

\[
\begin{aligned}
&\int_{B_R^d} \left|\nabla_\L u \cdot \frac{\nabla_\L d}{|\nabla_\L d|}\right|^p\frac{|\nabla_\L d|^\al}{d^{p(\theta-1)}} dx - \left| \frac{Q - p\theta}{p} \right|^p \int_{B_R^d} \frac{|u|^p}{d^{p\theta}}|\nabla_\L d|^{\al+p} dx = \left\| w(p,f,g)(f-g) \right\|_{L^2(B_R^d)}^2\\[10pt]
&
\geq \frac{p}{2}\int_{B_R^d}|g|^{p-2}(f-g)^2 dx 
= \frac{p}{2} \left| \frac{Q - p\theta}{p} \right|^{p-2} \int_{B_R^d} \frac{|u|^{p-2}}{d^{p\theta-2}} \left( \nabla_\L u \cdot \frac{\nabla_\L d}{|\nabla_\L d|} + \frac{Q - p\theta}{p} \frac{u}{d}|\nabla_\L d| \right)^2 |\nabla_\L d|^{\al+p-2} dx \\[10pt]
&= \frac{p}{2} \left| \frac{Q - p\theta}{p} \right|^{p-2} \int_{B_R^d} \frac{|\nabla_\L d|^{\al+p-2}}{d^{Q-2}}\left| u d^{\frac{Q-p\theta}{p}} \right|^{p-2} \left( \nabla_\L \left( u d^{\frac{Q-p\theta}{p}} \right) \cdot \frac{\nabla_\L d}{|\nabla_\L d|} \right)^2 dx \\[10pt]
&= \frac{2}{p} \left| \frac{Q - p\theta}{p} \right|^{p-2} \int_{B_R^d} \frac{|\nabla_\L d|^{\al+p-2}}{d^{Q-2}} \left[ \nabla_\L \left| u d^{\frac{Q-p\theta}{p}} \right|^{\frac{p}{2}} \cdot \frac{\nabla_\L d}{|\nabla_\L d|} \right]^2 dx \\[10pt]
&\geq\frac{2}{p} \left| \frac{Q - p\theta}{p} \right|^{p-2} \frac{z_0^2}{R^2} \int_{B_R^d} \frac{|u|^p}{d^{p\theta-2}} |\nabla_\L d |^{\al+p}dx.
\end{aligned}
\]

Here, we first use the fact that \(|v|^{p-2}|\nabla_\L v\cdot \frac{\nabla_\L d}{|\nabla_\L d|}|^2 = \frac{4}{p^2}|\nabla_\L|v|^{\frac{p}{2}}\cdot\frac{\nabla_\L d}{|\nabla_\L d|}|^2\), and then the Poincaré inequality (\ref{2: EQ: THM: Weighted Poincaré Inequality}).
\end{proof}

\section{Poincaré inequalities and Hardy improvements for some sub-elliptic operators}\label{SEC: Applicazioni particolari}
In this section, we will apply the previous result to some specific operators.

\subsection{Euclidean Laplacian}
In \(\mathbb{R}^N\), let's consider the vector field \(\nabla_\mathcal{L} = \nabla\) and the corresponding p-Laplacian operator \(\Delta_p\). Let \(d(x) = |x|\) be the Euclidean distance from the origin, and let \(\delta_\lambda\) be the family of dilations \(\delta_\lambda(x) = (\lambda x_1, \ldots, \lambda x_N)\). In this case, where \(Q = N\), it's immediate to verify that both \(\nabla\) and \(|x|\) are homogeneous of degree one with respect to \(\delta_\lambda\).
Furthermore, for \(p \geq 2\), 

\[
\begin{cases} 
\Delta_p |x|^{\frac{p-N}{p-1}}=0 &\text{ in } \mathbb{R}^N \setminus \{0\},  \text{ if } p \neq N \\
\Delta_N(-\ln|x|)=0 &\text{ in } \mathbb{R}^N \setminus \{0\}, \text{ if } p = N.
\end{cases}
\]
Additionally, note that \(|\nabla d|=|\nabla |x|| = 1\).\\

The results from Section \ref{SEC: Main Results} are applicable in this context, we have:

\begin{theorem}[Weighted Poincaré inequality]\label{6Euc: THM: Disuguaglianza di Poicnarè pesata}
Let \( p \geq 2 \) and \( \theta \geq 1 \) be fixed. For every \( u \in W_0^{1,p}(B_R, |x|^{\theta-N}) \), the following inequalities hold:
\begin{equation}\label{6Euc: EQ: THM: Weighted Poincaré Inequality}
\left(\frac{\nu_1(p,\theta)}{R}\right)^p\int_{B_R} \frac{|u|^p}{|x|^{N-\theta}} \,dx \leq \int_{B_R} \left|\nabla u \cdot\frac{x}{|x|}\right|^p \frac{1}{|x|^{N-\theta}}\,dx \leq \int_{B_R} \frac{|\nabla u|^p}{|x|^{N-\theta}} \,dx
\end{equation}
Moreover, the chain of inequalities is sharp since the function
\( u = \varphi\left(\frac{\nu_1(p,\theta)}{R}|x|\right) \in W_0^{1,p}(B_R, |x|^{\theta-N}) \)
attains both equalities.
\end{theorem}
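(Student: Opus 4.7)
The plan is to recognize this theorem as a direct specialization of the general Weighted Poincaré inequality (Theorem \ref{2: THM: Disuguaglianza di Poicnarè pesata}) to the Euclidean setting, so the entire work consists in verifying that the Euclidean data fit the abstract framework of Section \ref{SEC: Main Results} and then applying the general result with $\alpha=0$.

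First, I would set up the dictionary between the Euclidean and abstract settings. Take $h=N$ and $X_i=\partial_{x_i}$, so that $\sigma$ is the identity matrix, $\nabla_\L=\nabla$, and $\L_p=\Delta_p$. The dilation $\delta_\lambda(x)=(\lambda x_1,\dots,\lambda x_N)$ makes $\nabla$ homogeneous of degree one, and the homogeneous dimension is $Q=\beta_1+\dots+\beta_N=N$. Next I would verify the four conditions on $d(x)=|x|$: continuity and smoothness away from $0$, strict positivity off the origin, degree-one homogeneity under $\delta_\lambda$, and the $p$-harmonicity property $\Delta_p|x|^{(p-N)/(p-1)}=0$ (for $p\neq N$) respectively $\Delta_N(-\ln|x|)=0$ (for $p=N$). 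These last identities are standard direct computations using $\nabla|x|=x/|x|$ and the chain rule applied to radial functions.

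Second, the key simplification in the Euclidean case is that $|\nabla_\L d|=|\nabla |x||=1$ identically on $\mathbb{R}^N\setminus\{0\}$. Consequently $|\nabla_\L d|^\alpha\equiv 1$ for every $\alpha\geq 0$, and the weighted Sobolev norm (\ref{INTRO: EQ: Norma Sobolev}) with weight $|\nabla_\L d|^\alpha d^{\theta-Q}=|x|^{\theta-N}$ collapses to the norm defining $W_0^{1,p}(B_R,|x|^{\theta-N})$. In particular, the three weight-dependent quantities in the chain (\ref{2: EQ: THM: Weighted Poincaré Inequality}) reduce exactly to the three integrals appearing in (\ref{6Euc: EQ: THM: Weighted Poincaré Inequality}), with $Q$ replaced by $N$.

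Finally, I would apply Theorem \ref{2: THM: Disuguaglianza di Poicnarè pesata} with the choice $\alpha=0$. The chain of inequalities follows immediately, and the sharpness is inherited from the general statement: the function $u=\varphi\!\left(\frac{\nu_1(p,\theta)}{R}|x|\right)$, which is $C^1(\overline{B_R})$ and vanishes on $\partial B_R$ (since $\varphi$ has its first zero at $\nu_1(p,\theta)$), lies in $W_0^{1,p}(B_R,|x|^{\theta-N})$ and saturates both inequalities, because it is radial (so $|\nabla u|^p=|\nabla u\cdot x/|x||^p$) and the Poincaré identity (\ref{2: EQ: DIMO: Identità Varphi}) forces equality. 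There is essentially no obstacle here: the only non-trivial step, namely the verification of the $p$-harmonicity hypothesis on $d$, is a textbook computation, and everything else is bookkeeping. The theorem thus appears simply as the Euclidean instance of the general result, with the explicit value $\nu_1(2,\theta)=z_{(\theta-2)/2}$ (first zero of the Bessel function $J_{(\theta-2)/2}$) when $p=2$, as recorded in Section \ref{SEC EXAMPLES}.
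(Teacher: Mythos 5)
Your proposal is correct and matches the paper's approach exactly: the paper presents Theorem \ref{6Euc: THM: Disuguaglianza di Poicnarè pesata} as a direct specialization of Theorem \ref{2: THM: Disuguaglianza di Poicnarè pesata} to the Euclidean data $\nabla_\L=\nabla$, $d(x)=|x|$, $Q=N$, after checking the $p$-harmonicity of $|x|^{(p-N)/(p-1)}$ and noting $|\nabla|x||=1$ (so the $\alpha$-weight is trivial). Your observation that any $\alpha\geq 0$ would work because $|\nabla_\L d|\equiv 1$ is a small but accurate refinement of the paper's bookkeeping.
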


An immediate corollary of Theorem \ref{6Euc: THM: Disuguaglianza di Poicnarè pesata} is the following:
\begin{corollary}
The first eigenvalue of the \(p\)-Laplacian operator in \(B_R \subset \mathbb{R}^N\) is \(\lambda_1 = \left(\frac{\nu_1(p,\theta)}{R}\right)^p\), and the corresponding eigenfunction is \(\varphi\left(\frac{\nu_1(p,\theta)}{R}|x|\right)\).
\end{corollary}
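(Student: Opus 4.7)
The plan is to use the variational characterization of the first Dirichlet eigenvalue of the $p$-Laplacian on $B_R$ together with the sharp weighted Poincaré inequality specialized to $\theta = N$. With this choice the weight $|x|^{\theta - N}$ reduces to $1$, so $W_0^{1,p}(B_R, |x|^{\theta - N})$ coincides with the standard space $W_0^{1,p}(B_R)$, and Theorem \ref{6Euc: THM: Disuguaglianza di Poicnarè pesata} reads
\[
\left(\frac{\nu_1(p,N)}{R}\right)^p \int_{B_R} |u|^p \, dx \leq \int_{B_R} |\nabla u|^p \, dx \quad \forall u \in W_0^{1,p}(B_R).
\]

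First I would recall the Rayleigh-type characterization
\[
\lambda_1 = \inf_{u \in W_0^{1,p}(B_R) \setminus \{0\}} \frac{\int_{B_R} |\nabla u|^p \, dx}{\int_{B_R} |u|^p \, dx},
\]
so that the Poincaré inequality above immediately yields $\lambda_1 \geq (\nu_1(p,N)/R)^p$. Next I would test this lower bound with the candidate eigenfunction $u_1(x) = \varphi\!\left(\frac{\nu_1(p,N)}{R}|x|\right)$. By Proposition \ref{1: PROP: Esistenza Unicità e Oscillazioni}, $u_1 \in C^1(\overline{B_R})$ with $u_1 > 0$ in $B_R$ and $u_1 = 0$ on $\partial B_R$; in particular $u_1 \in W_0^{1,p}(B_R)$. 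The sharpness part of Theorem \ref{6Euc: THM: Disuguaglianza di Poicnarè pesata} (equivalently identity \eqref{2: EQ: DIMO: Identità Varphi} applied with $\theta = N$) shows that $u_1$ realizes equality, giving the matching upper bound $\lambda_1 \leq (\nu_1(p,N)/R)^p$.

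To identify $u_1$ as the corresponding eigenfunction, I would then verify directly that $-\Delta_p u_1 = \lambda_1 |u_1|^{p-2} u_1$ in $B_R$. Using the radial formula $\Delta_p (f(|x|)) = r^{1-N} (r^{N-1} |f'|^{p-2} f')'$, substituting $f(r) = \varphi(\mu r)$ with $\mu = \nu_1(p,N)/R$, and invoking the ODE $(r^{N-1}\phi_p(\varphi'))' + r^{N-1}\phi_p(\varphi) = 0$ from problem \textbf{(P)} with $\theta = N$, the change of variables $s = \mu r$ produces exactly $-\Delta_p u_1 = \mu^p \phi_p(u_1) = \lambda_1 |u_1|^{p-2}u_1$, which is the eigenvalue equation. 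The main point requiring care is this last algebraic manipulation together with checking that $u_1$ enters the correct Sobolev space up to the boundary; everything else is either standard variational theory or an immediate application of results already established in the paper.
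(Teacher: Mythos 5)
Your proof is correct and follows essentially the same route as the paper's: specialize the weighted Poincaré inequality of Theorem \ref{6Euc: THM: Disuguaglianza di Poicnarè pesata} to \(\theta = N\) so that the weight is trivial, obtain the lower bound on \(\lambda_1\) from the Rayleigh-quotient characterization, and obtain the upper bound from sharpness of the inequality at \(u_1 = \varphi(\nu_1(p,N)|x|/R)\). Your added direct verification that \(-\Delta_p u_1 = \lambda_1 |u_1|^{p-2}u_1\) via the radial form \(\Delta_p(f(|x|)) = r^{1-N}(r^{N-1}\phi_p(f'))'\) and the ODE \textbf{(P)} is a correct but redundant check, since minimizing the Rayleigh quotient already identifies \(u_1\) as a first eigenfunction; note also that the corollary's statement should read \(\nu_1(p,N)\) rather than \(\nu_1(p,\theta)\), as you implicitly and correctly interpret.
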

\begin{proof}
Setting \(\theta = N\) in (\ref{6Euc: EQ: THM: Weighted Poincaré Inequality}) and recalling the variational characterization of the optimal Poincaré constant, the statement is straightforward.
\end{proof}

\begin{theorem}[Hardy Improvement of Type I]\label{6Euc: THM: Hardy Improvement of type I}
Let \(p \geq 2\), \(\theta \in \mathbb{R}\), and \(\lambda_p\) be the constant defined in Theorem \ref{4: THM: Stime dal basso}. For every \(u \in C^\infty_c(B_R \setminus \{0\})\), we have:
\[
\int_{B_R} \frac{|\nabla u|^p}{|x|^{p(\theta - 1)}} \, dx \geq \int_{B_R} \left|\nabla u \cdot \frac{x}{|x|}\right|^p\frac{1}{|x|^{p(\theta-1)}} \, dx
\]
\begin{equation}\label{6Euc: EQ: THM: Hardy Improvement of Type I}
\geq \left|\frac{Q - p\theta}{p}\right|^p \int_{B_R} \frac{|u|^p}{|x|^{p\theta}} \, dx + \lambda_p \frac{\nu_1(p,p)^p}{R^p} \int_{B_R} \frac{|u|^p}{|x|^{p(\theta - 1)}} \, dx.
\end{equation}

\end{theorem}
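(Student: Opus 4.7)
The plan is to obtain this theorem as a direct specialization of the general Hardy Improvement of Type I (Theorem \ref{4: THM: Hardy Improvement of type I}) to the Euclidean framework set up at the beginning of this subsection. Hence the bulk of the argument consists in checking that all structural hypotheses of the general theorem are satisfied here, and then reading off the resulting inequality.

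First I would fix the data: take $\nabla_\L = \nabla$, $\L_p = \Delta_p$, the dilations $\delta_\lambda(x) = (\lambda x_1,\ldots,\lambda x_N)$, which gives homogeneous dimension $Q = N$, and the homogeneous norm $d(x) = |x|$, together with $\alpha = 0$. I would then verify that $d(x) = |x|$ satisfies the four properties required in Section \ref{SEC: Main Results}: it is continuous on $\mathbb{R}^N$ and smooth away from the origin, strictly positive except at $0$, homogeneous of degree one under $\delta_\lambda$, and it satisfies
\[
\Delta_p |x|^{\frac{p-N}{p-1}} = 0 \quad\text{in }\mathbb{R}^N\setminus\{0\} \text{ if } p\neq N, \qquad \Delta_N(-\ln|x|) = 0 \quad\text{in } \mathbb{R}^N\setminus\{0\} \text{ if } p = N,
\]
which is exactly the classical computation already recalled at the start of this subsection.

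Next I would invoke Theorem \ref{4: THM: Hardy Improvement of type I}. The key simplification is that $|\nabla_\L d| = |\nabla|x|| \equiv 1$, so every factor $|\nabla_\L d|^{\alpha}$, $|\nabla_\L d|^{\alpha + p}$ or $|\nabla_\L d|^{\alpha + p - 2}$ appearing in (\ref{4: EQ: THM: Hardy Improvement of Type I}) reduces to $1$, and $\nabla_\L d / |\nabla_\L d| = x/|x|$. Substituting these identities (together with $Q = N$ implicit in the Euclidean setting, so that $\left|\frac{Q-p\theta}{p}\right|^p = \left|\frac{N-p\theta}{p}\right|^p$) into the general inequality yields precisely (\ref{6Euc: EQ: THM: Hardy Improvement of Type I}). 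The constant $\lambda_p$ is inherited verbatim from the general theorem via the characterization $\lambda_p = \min_{x\in[1/2,1]}\{x^p + (1-x)^{p-1}(x+p-1)\}$.

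There is no genuine obstacle to overcome here, since the entire analytic content (the $L^p$ identity of Lemma \ref{2: Lemma: Proposizione identità Lp in LD}, the lower bound of Theorem \ref{4: THM: Stime dal basso}, and the weighted Poincaré inequality of Theorem \ref{2: THM: Disuguaglianza di Poicnarè pesata}) has already been established in the required generality. The only point worth emphasizing in the write-up is the collapse $|\nabla_\L d| \equiv 1$, which is precisely what makes the Euclidean statement look simpler than the general one while remaining a particular instance of it.
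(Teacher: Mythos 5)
Your proposal is correct and coincides with the paper's own treatment: in Section~\ref{SEC: Applicazioni particolari} the Euclidean Hardy Improvement of Type~I is stated, without a separate argument, as the specialization of Theorem~\ref{4: THM: Hardy Improvement of type I} to $\nabla_\L=\nabla$, $d(x)=|x|$, $Q=N$, $\alpha=0$, after noting $|\nabla|x||\equiv 1$ and the $\L_p$-harmonicity of $|x|^{(p-N)/(p-1)}$ (resp.\ $-\ln|x|$). Your write-up just makes explicit the routine verification that the paper leaves implicit, so there is nothing to correct.
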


\vspace{0.2cm}

\begin{theorem}[Hardy Improvement of Type II]\label{6Euc: THM: Hardy Improvement of Type II}
Let \( p \geq 2 \), \( \theta \in \mathbb{R} \), and \( z_0 \) be the first zero of the Bessel function \( J_0(r) \). Then, for every \( u \in C^\infty_c(B_R \setminus \{0\}) \), we have:
\[
\int_{B_R} \frac{|\nabla u|^p}{|x|^{p(\theta - 1)}} \, dx \geq \int_{B_R} \left|\nabla u \cdot \frac{x}{|x|}\right|^p\frac{1}{|x|^{p(\theta-1)}} \, dx
\]
\begin{equation}\label{6Euc: EQ: THM: Hardy improvement of type II}
\geq \left|\frac{N - p\theta}{p}\right|^p \int_{B_R} \frac{|u|^p}{|x|^{p\theta}} \, dx + \frac{2} {p} \left| \frac{N-p\theta}{p} \right|^{p-2} \frac{z_0^2}{R^2} \int_{B_R} \frac{|u|^p}{|x|^{p\theta - 2}} \, dx.
\end{equation}
\end{theorem}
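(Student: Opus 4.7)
The statement is simply the Euclidean instantiation of the general Hardy Improvement of Type II (Theorem~\ref{4: THM: Hardy Improvement of Type II}) for the choice $\nabla_\L=\nabla$, $d(x)=|x|$, and $\alpha=0$. So the entire task reduces to checking that the Euclidean setting fits the abstract framework of Section~\ref{SEC: Main Results} and then reading off the specialization.

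First I would recast the Euclidean gradient in the sub-Riemannian formalism: take $\sigma$ to be the $N\times N$ identity matrix, so that $\nabla_\L=\nabla$, and choose the isotropic dilations $\delta_\lambda(x)=\lambda x$, for which $\beta_1=\dots=\beta_N=1$ and therefore the homogeneous dimension satisfies $Q=N$. With these dilations $\nabla$ is homogeneous of degree one, as required, and the associated operator $\L_p$ is the standard $p$-Laplacian.

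Next I would verify that $d(x)=|x|$ meets the four hypotheses imposed on the distance-like function. Continuity on $\mathbb{R}^N$, smoothness and positivity on $\mathbb{R}^N\setminus\{0\}$, and the $\delta_\lambda$-homogeneity $|\lambda x|=\lambda|x|$ are all immediate. The nontrivial condition is that $|x|^{(p-N)/(p-1)}$ is $p$-harmonic away from the origin when $p\neq N$, and that $-\ln|x|$ is $N$-harmonic when $p=N$; both are classical and follow from the pointwise computation using $\nabla|x|=x/|x|$ and $|\nabla|x||=1$.

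Finally I would invoke Theorem~\ref{4: THM: Hardy Improvement of Type II} with $\alpha=0$. Since $|\nabla_\L d|\equiv 1$ on $\mathbb{R}^N\setminus\{0\}$, every weight $|\nabla_\L d|^\beta$ collapses to $1$, the exponent $Q$ becomes $N$, and the projection $\nabla_\L u\cdot\nabla_\L d/|\nabla_\L d|$ becomes $\nabla u\cdot x/|x|$. Term by term the general inequality reduces to exactly~\eqref{6Euc: EQ: THM: Hardy improvement of type II}. There is no genuine obstacle in this step: all of the analytic content --- the $L^p$ identity of Lemma~\ref{2: Lemma: Proposizione identità Lp in LD}, the pointwise weight estimate of Proposition~\ref{STIME: PROP: STIMA DEL SECONDO TIPO}, and the weighted Poincaré inequality of Theorem~\ref{2: THM: Disuguaglianza di Poicnarè pesata} --- has already been absorbed into the abstract theorem.
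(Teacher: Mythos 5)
Your proof is correct and matches the paper's own treatment: in Section~\ref{SEC: Applicazioni particolari} the paper simply records the Euclidean setup ($\sigma=I_N$, $\delta_\lambda(x)=\lambda x$, $Q=N$, $d(x)=|x|$, $|\nabla|x||=1$, and the classical $p$-harmonicity of $|x|^{(p-N)/(p-1)}$ and of $-\ln|x|$) and then states Theorem~\ref{6Euc: THM: Hardy Improvement of Type II} as the instantiation of the general Theorem~\ref{4: THM: Hardy Improvement of Type II} with $\alpha=0$. You verify the same hypotheses and perform the same specialization, so nothing is missing.
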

\vspace{1cm}

\begin{remark}
It is interesting to note that for \( p=2 \), the Type I and Type II improvements coincide with the weighted version of the Brezis-Vázquez inequality. Specifically, we have:

\[
\int_{B_R} \frac{|\nabla u|^2}{|x|^{2(\theta - 1)}} \, dx \geq \int_{B_R} \left|\nabla u \cdot \frac{x}{|x|}\right|^2\frac{1}{|x|^{2(\theta-1)}} \, dx
\geq \left(\frac{N - 2\theta}{2}\right)^2 \int_{B_R} \frac{|u|^2}{|x|^{2\theta}} \, dx + \frac{z_0^2}{R^2} \int_{B_R} \frac{|u|^2}{|x|^{2(\theta - 1)}} \, dx.
\]

For \(\theta = 1\), this matches exactly with equation (\ref{INTRO: EQ: Brezis-Vazquez}).
\end{remark}

\subsection{Baouendi-Grushin Operator}
Let \(\mathbb{R}^N\) be split into \((x, y) \in \mathbb{R}^n \times \mathbb{R}^k\). Fix \(\gamma \geq 0\) and consider the vector field \(\nabla_\gamma = (\nabla_x, (1+\gamma)|x|^\gamma \nabla_y)\). The corresponding linear operator \(\mathcal{L} = \mathcal{L}_2\) is the so-called Baouendi-Grushin operator \(\Delta_\gamma = \Delta_x + (1+\gamma)^2 |x|^{2\gamma} \Delta_y\), while \(\mathcal{L}_p\) is given by \(\mathcal{L}_p = \operatorname{div}_\mathcal{L} (|\nabla_\gamma u|^{p-2} \nabla_\gamma u)\).\\

Note that for \(k = 0\) or \(\gamma = 0\), \(\mathcal{L}\) and \(\mathcal{L}_p\) correspond to the usual Euclidean Laplacian and p-Laplacian, respectively.\\

Let \(\rho(x,y)\) be the following distance from the origin in \(\mathbb{R}^N\):
\[
\rho(x, y) = \left( |x|^{2(1+\gamma)} + |y|^2 \right)^{\frac{1}{2(1+\gamma)}}.
\]

The vector field \(\nabla_\gamma\) and the function \(\rho(x, y)\) are homogeneous of degree one with respect to the family of dilations \(\delta_\lambda = (\lambda x, \lambda^{1+\gamma} y)\) for \(\lambda > 0\). Let \(Q = n + (1+\gamma)k\) be the so-called homogeneous dimension, a straightforward calculation shows that:

\[
\begin{cases} 
\mathcal{L}_p \rho^{\frac{p-Q}{p-1}} = 0 & \text{in } \mathbb{R}^N \setminus \{0\}\,\,\, \text{ if } p \neq Q \\
\mathcal{L}_Q(-\ln \rho) = 0 & \text{in } \mathbb{R}^N \setminus \{0\}\,\,\, \text{ if } p = Q
\end{cases}
\]

Finally, we observe that \(|\nabla_\gamma \rho|^2 = \frac{|x|^{2\gamma}}{\rho^{2\gamma}} \leq 1\).\\

All the results from Section \ref{SEC: Main Results} hold, and thus we can summarize them as follows.

\begin{theorem}[Weighted Poincaré inequality]\label{6Grushin: THM: Disuguaglianza di Poicnarè pesata}
Let \( p \geq 2 \), \(\al\geq 0\) and \( \theta \geq 1 \) be fixed. For every \( u \in W_0^{1,p}(B^\rho_R, |\nabla_\gamma \rho|^\al \rho^{\theta-Q}) \), the following inequalities hold:
\begin{equation}\label{6Grushin: EQ: THM: Weighted Poincaré Inequality}
\left(\frac{\nu_1(p,\theta)}{R}\right)^p\int_{B_R^\rho} \frac{|u|^p}{\rho^{Q-\theta}}|\nabla_\gamma \rho|^{\alpha+p}\,dx 
\leq \int_{B_R^\rho} \left|\nabla_\gamma u \cdot\frac{\nabla_\gamma \rho}{|\nabla_\gamma \rho|}\right|^p \frac{|\nabla_\gamma \rho|^\alpha}{\rho^{Q-\theta}}\,dx
\leq \int_{B_R^\rho} \frac{|\nabla_\gamma u|^p}{\rho^{Q-\theta}}|\nabla_\gamma \rho |^\alpha\,dx \end{equation}
Moreover, the chain of inequalities is sharp since the function
\( u=\varphi\left(\frac{\nu_1(p,\theta)}{R}\rho\right) \in W_0^{1,p}(B_R^\rho, |\nabla_\gamma \rho|^\al \rho^{\theta-Q}) \)
attains both equalities.
\end{theorem}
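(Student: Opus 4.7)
The plan is to deduce this theorem as a direct instantiation of the general weighted Poincaré inequality (Theorem \ref{2: THM: Disuguaglianza di Poicnarè pesata}) applied with the choice $\nabla_\L = \nabla_\gamma$ and $d(x,y) = \rho(x,y)$. Accordingly, the proof reduces to verifying that $\rho$ and the Grushin gradient fit the abstract framework introduced in Section \ref{SEC: Main Results}, and then quoting the general statement verbatim.

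First I would go through the four structural hypotheses on $d$ one by one. Regularity and positivity: $\rho \in C(\mathbb{R}^N) \cap C^\infty(\mathbb{R}^N \setminus \{0\})$, with $\rho > 0$ off the origin, is immediate from the formula $\rho = (|x|^{2(1+\gamma)} + |y|^2)^{1/(2(1+\gamma))}$. Degree-one homogeneity with respect to $\delta_\lambda(x,y) = (\lambda x, \lambda^{1+\gamma} y)$ follows by a one-line substitution, and the same family of dilations makes each component of $\nabla_\gamma$ homogeneous of degree one, with homogeneous dimension $Q = n + (1+\gamma)k$. The $\mathcal{L}_p$-harmonicity of $\rho^{(p-Q)/(p-1)}$ (respectively, of $-\ln \rho$ when $p = Q$) has already been recorded in the preceding paragraph as a direct calculation, and can be verified by computing $\nabla_\gamma \rho = \rho^{-(1+2\gamma)}(|x|^{2\gamma}x, (1+\gamma)^{-1}\cdot (1+\gamma)|x|^\gamma\, y \cdot |x|^\gamma)$, whence $|\nabla_\gamma \rho|^2 = |x|^{2\gamma}/\rho^{2\gamma}$, and then inserting into $\Div_\L(|\nabla_\gamma \cdot|^{p-2}\nabla_\gamma \cdot)$.

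Once these four properties are in hand, Theorem \ref{2: THM: Disuguaglianza di Poicnarè pesata} applies literally and produces the chain of inequalities (\ref{6Grushin: EQ: THM: Weighted Poincaré Inequality}); likewise, it asserts that $\varphi(\nu_1(p,\theta) \rho / R)$ belongs to the weighted Sobolev space and saturates both inequalities, which is precisely the sharpness statement here.

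I do not anticipate a genuine obstacle: the only nontrivial verification is the $\mathcal{L}_p$-harmonicity of the fundamental-solution-type function $\rho^{(p-Q)/(p-1)}$, and this is a well-known computation for the Grushin vector field that the paper has already stated as a hypothesis fulfilled in this setting. In particular, one must note the auxiliary identities of Lemma \ref{NEW: LEMMA: p-Laplaciano di d} (especially $\nabla_\gamma |\nabla_\gamma \rho| \cdot \nabla_\gamma \rho = 0$), which are automatic consequences of condition (4) and therefore available without extra work. The rest of the argument is purely a citation.
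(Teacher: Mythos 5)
Your proposal matches the paper's approach exactly: the Grushin case is obtained by verifying that $\rho(x,y)=(|x|^{2(1+\gamma)}+|y|^2)^{1/(2(1+\gamma))}$ and $\nabla_\gamma$ satisfy the four structural hypotheses of Section~\ref{SEC: Main Results} and then citing Theorem~\ref{2: THM: Disuguaglianza di Poicnarè pesata} verbatim, which is precisely what the paper does in its Baouendi--Grushin subsection. One small slip: your intermediate formula for $\nabla_\gamma\rho$ has an extra factor of $|x|^\gamma$ in the $y$-component (it should read $\nabla_\gamma\rho = \rho^{-1-2\gamma}(|x|^{2\gamma}x,\, |x|^\gamma y)$), but the quantity you actually use, $|\nabla_\gamma\rho|^2=|x|^{2\gamma}/\rho^{2\gamma}$, is nevertheless correct and agrees with the paper.
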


\begin{theorem}[Hardy Improvement of type I]\label{6Grushin: THM: Hardy Improvement of type I}
Let \(p \geq 2\), \(\alpha \geq 0\), \(\theta \in \mathbb{R}\), and \(\lambda_p\) be the constant defined in Theorem \ref{4: THM: Stime dal basso}. For every \(u \in C^\infty_c(B^\rho_R \setminus \{0\})\), we have:

\[
\int_{B^\rho_R} \frac{|\nabla_\gamma u|^p}{\rho^{p(\theta - 1)}}|\nabla_\gamma \rho|^\alpha \, dx \geq \int_{B^\rho_R} \left|\nabla_\gamma u \cdot \frac{\nabla_\gamma \rho}{|\nabla_\gamma \rho|}\right|^p \frac{|\nabla_\gamma \rho|^\alpha}{\rho^{p(\theta-1)}} \, dx
\]
\begin{equation}\label{6Grushin: EQ: THM: Hardy Improvement of Type I}
\geq \left|\frac{Q - p\theta}{p}\right|^p \int_{B^\rho_R} \frac{|u|^p}{\rho^{p\theta}}|\nabla_\gamma \rho|^{\alpha + p} \, dx + \lambda_p \frac{\nu_1(p,\theta)^p}{R^p} \int_{B^\rho_R} \frac{|u|^p}{\rho^{p(\theta - 1)}}|\nabla_\gamma \rho |^{\alpha + p} \, dx.
\end{equation}
\end{theorem}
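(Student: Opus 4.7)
The plan is to verify that the Baouendi-Grushin setting fits into the general framework of Section \ref{SEC: Main Results}, so that Theorem \ref{6Grushin: THM: Hardy Improvement of type I} follows as a direct specialization of Theorem \ref{4: THM: Hardy Improvement of type I}. Concretely, I need to check that the function $\rho(x,y) = \bigl(|x|^{2(1+\gamma)} + |y|^2\bigr)^{1/(2(1+\gamma))}$ satisfies properties (1)--(4) listed before the definition of $B^d_R$, with $\nabla_\L$ replaced by $\nabla_\gamma$, $\L_p$ by the Grushin $p$-Laplacian, and the dilations taken as $\delta_\lambda(x,y) = (\lambda x, \lambda^{1+\gamma} y)$.

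Properties (1) and (2) are immediate from the explicit formula: $\rho$ is smooth away from the origin, continuous everywhere, strictly positive off the origin, and vanishes only at $0$. Property (3) is a direct algebraic check: substituting $(\lambda x, \lambda^{1+\gamma} y)$ inside the defining expression and pulling $\lambda^{2(1+\gamma)}$ out of both terms yields $\rho(\delta_\lambda(x,y)) = \lambda \rho(x,y)$. Property (4), namely $\L_p \rho^{(p-Q)/(p-1)} = 0$ for $p\neq Q$ and $\L_Q(-\ln\rho) = 0$ for $p=Q$, is the standard computation for the Grushin $p$-Laplacian on its fundamental solution; this is already asserted in the excerpt as a straightforward calculation and can be verified by a direct differentiation using $|\nabla_\gamma \rho|^2 = |x|^{2\gamma}/\rho^{2\gamma}$.

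With these four properties in hand, the general theory of Section \ref{SEC: Main Results} applies verbatim. In particular, all three lemmas of Section \ref{SEC POINCARE} (the identities involving $\L_p d$ and $\nabla_\L|\nabla_\L d|\cdot\nabla_\L d = 0$, the compactness of $\{d \leq R\}$, and the volume/coarea formulas) hold with $d = \rho$ and $\nabla_\L = \nabla_\gamma$. Consequently, the weighted Poincaré inequality of Theorem \ref{6Grushin: THM: Disuguaglianza di Poicnarè pesata} is available, and the proof of the general Theorem \ref{4: THM: Hardy Improvement of type I} goes through unchanged: one plugs $d = \rho$ into the choice
\[
f = \frac{\nabla_\gamma u}{\rho^{\theta-1}}\cdot \frac{\nabla_\gamma \rho}{|\nabla_\gamma \rho|^{(p-\al)/p}}, \qquad g = -\frac{Q-p\theta}{p}\,\frac{u}{\rho^\theta}|\nabla_\gamma\rho|^{(p+\al)/p},
\]
uses identity (\ref{2: EQ: Idenità L^p In LD}) together with Lemma \ref{NEW: LEMMA: p-Laplaciano di d} (now for $\rho$), applies the lower bound (\ref{4: EQ: THM: Stima del primo tipo}), and closes with the weighted Poincaré inequality (\ref{6Grushin: EQ: THM: Weighted Poincaré Inequality}) applied to $u\rho^{(Q-p\theta)/p}$.

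The main (and essentially only) technical obstacle is the explicit verification of property (4), i.e. that the radial power $\rho^{(p-Q)/(p-1)}$ is $\L_p$-harmonic off the origin in the Grushin geometry; all other steps are mechanical substitutions into the general proof. Once (4) is secured, no further analysis is needed and the theorem is obtained as a corollary of the general Hardy improvement of type I.
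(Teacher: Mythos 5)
Your argument is correct and is exactly the paper's route: check the structural hypotheses for $(\nabla_\gamma,\rho,\delta_\lambda,Q)$ — homogeneity of $\rho$ and $\nabla_\gamma$, positivity, and $\L_p$-harmonicity of $\rho^{(p-Q)/(p-1)}$ — and then read the result off the general Theorem \ref{4: THM: Hardy Improvement of type I}, whose proof goes through verbatim once Lemma \ref{NEW: LEMMA: p-Laplaciano di d} and the weighted Poincaré inequality (Theorem \ref{6Grushin: THM: Disuguaglianza di Poicnarè pesata}) are available. One detail worth flagging: the remainder constant your argument actually produces is $\lambda_p\,\nu_1(p,p)^p/R^p$, since the Poincaré inequality is applied to $u\rho^{(Q-p\theta)/p}$ against the weight $\rho^{p-Q}|\nabla_\gamma\rho|^\alpha$, i.e.\ with Poincaré parameter equal to $p$ regardless of $\theta$; the $\nu_1(p,\theta)$ in the printed statement should therefore read $\nu_1(p,p)$, in agreement with Theorem \ref{4: THM: Hardy Improvement of type I} and its Euclidean specialization Theorem \ref{6Euc: THM: Hardy Improvement of type I}.
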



\begin{theorem}[Hardy Improvement of Type II]\label{6Grushin: THM: Hardy Improvement of Type II}
Let \( p \geq 2 \), \(\al\geq 0\), \( \theta \in \mathbb{R} \), and \( z_0 \) be the first zero of the Bessel function \( J_0(r) \). Then, for every \( u \in C^\infty_c(B_R^\rho \setminus \{0\}) \), we have:

\[
\int_{B^\rho_R} \frac{|\nabla_\gamma u|^p}{\rho^{p(\theta - 1)}}|\nabla_\gamma \rho|^\alpha \, dx \geq \int_{B^\rho_R} \left|\nabla_\gamma u \cdot \frac{\nabla_\gamma \rho}{|\nabla_\gamma \rho|}\right|^p \frac{|\nabla_\gamma \rho|^\alpha}{\rho^{p(\theta-1)}} \, dx
\]
\begin{equation}\label{6Grushin: EQ: THM: Hardy improvement of type II}
\geq \left|\frac{Q - p\theta}{p}\right|^p \int_{B^\rho_R} \frac{|u|^p}{\rho^{p\theta}}|\nabla_\gamma \rho|^{\alpha + p} \, dx + \frac{2}{p} \left| \frac{Q - p\theta}{p} \right|^{p-2} \frac{z_0^2}{R^2} \int_{B^\rho_R} \frac{|u|^p}{\rho^{p\theta - 2}}|\nabla_\gamma \rho|^{\alpha + p} \, dx
\end{equation}
\end{theorem}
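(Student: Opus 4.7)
The strategy is to verify that the Baouendi--Grushin setting is an instance of the abstract framework of Section \ref{SEC: Main Results} and then invoke Theorem \ref{4: THM: Hardy Improvement of Type II} directly. There is no new analytic content to produce; all the work is in the bookkeeping of the hypotheses on the triple $(\nabla_\gamma,\delta_\lambda,\rho)$.

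First I would record the underlying data. The vector field $\nabla_\gamma = (\nabla_x,(1+\gamma)|x|^\gamma\nabla_y)$ is written as $\sigma\nabla$ with $\sigma$ the continuous block matrix $\mathrm{diag}(I_n,(1+\gamma)|x|^\gamma I_k)$, and its components together with their column-divergences are continuous on $\mathbb{R}^N$, so the formal adjoint $\nabla_\gamma^*$ and the operators $\mathcal{L}$, $\mathcal{L}_p$ are well defined. Under the dilations $\delta_\lambda(x,y) = (\lambda x,\lambda^{1+\gamma}y)$ an elementary chain-rule computation shows that $\nabla_\gamma$ is $\delta_\lambda$-homogeneous of degree $1$, giving the homogeneous dimension $Q = n+(1+\gamma)k$.

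Next I would verify the four properties required of the gauge $d$. For $\rho(x,y)=(|x|^{2(1+\gamma)}+|y|^2)^{1/(2(1+\gamma))}$ the continuity on $\mathbb{R}^N$ and smoothness on $\mathbb{R}^N\setminus\{0\}$ are immediate, as are the positivity statement and the vanishing at the origin. Degree-one $\delta_\lambda$-homogeneity of $\rho$ follows from the identity $|\lambda x|^{2(1+\gamma)}+|\lambda^{1+\gamma}y|^2 = \lambda^{2(1+\gamma)}(|x|^{2(1+\gamma)}+|y|^2)$. The key computation is $|\nabla_\gamma\rho|^2 = |x|^{2\gamma}/\rho^{2\gamma}$, from which one obtains by a direct differentiation that $\mathcal{L}_p\rho^{(p-Q)/(p-1)}=0$ for $p\neq Q$ and $\mathcal{L}_Q(-\ln\rho)=0$ for $p=Q$ on $\mathbb{R}^N\setminus\{0\}$; this calculation is standard and is already tacitly used in Lemma \ref{NEW: LEMMA: p-Laplaciano di d}.

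Once these checks are complete, the Grushin balls $B_R^\rho=\{\rho\leq R\}$ play the role of $B_R^d$, and Theorem \ref{4: THM: Hardy Improvement of Type II} applied with $d=\rho$ and $\nabla_\L=\nabla_\gamma$ gives exactly the chain of inequalities (\ref{6Grushin: EQ: THM: Hardy improvement of type II}) for every $u\in C^\infty_c(B_R^\rho\setminus\{0\})$. The only mildly delicate step is the $\mathcal{L}_p$-harmonicity computation for $\rho^{(p-Q)/(p-1)}$, because of the anisotropic weight $|x|^{2\gamma}$ hidden in $\sigma^T\sigma$; I would handle it by first computing $\nabla_\gamma\rho$ and $|\nabla_\gamma\rho|$ explicitly in $(x,y)$-coordinates, then applying $\mathrm{div}_\gamma=\Div(\sigma^T\cdot)$ and collecting terms, separating the cases $p\neq Q$ and $p=Q$ as in the general Lemma \ref{NEW: LEMMA: p-Laplaciano di d}. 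Everything else is a mechanical transcription.
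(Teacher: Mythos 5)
Your proposal matches the paper's own route exactly: Section \ref{SEC: Applicazioni particolari} introduces the Grushin data ($\sigma$, $\delta_\lambda$, $Q=n+(1+\gamma)k$, $\rho$, the $\mathcal{L}_p$-harmonicity of $\rho^{(p-Q)/(p-1)}$ and $|\nabla_\gamma\rho|^2=|x|^{2\gamma}/\rho^{2\gamma}$), then records that all hypotheses of the abstract framework are met and states the three theorems as direct instances of Theorems \ref{2: THM: Disuguaglianza di Poicnarè pesata}, \ref{4: THM: Hardy Improvement of type I}, and \ref{4: THM: Hardy Improvement of Type II}. Your bookkeeping is correct and complete, so there is nothing to add.
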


\subsection{Heisenberg Sub-Laplacian}

Let \( N = 2n + 1 \) and let \( (z,t) = (x,y,t) \in \mathbb{R}^n \times \mathbb{R}^n \times \mathbb{R} = \mathbb{H}_n \). We define the vector field \(\nabla_\mathbb{H} = (X_1, \ldots, X_n, Y_1, \ldots, Y_n) \), where 
\[ X_i = \frac{\partial}{\partial x_i} + 2y_i \frac{\partial}{\partial t} \quad \text{and} \quad Y_i = \frac{\partial}{\partial y_i} - 2x_i \frac{\partial}{\partial t}. \]
Note that this is equivalent to choosing the matrix \(\sigma\) equal to 
\[ \begin{pmatrix} I_n & 0 & 2y\\ 0 & I_n & -2x \end{pmatrix}. \]

The corresponding Heisenberg sub-Laplacian is given by \[\Delta_\mathbb{H}u = 
\Delta_z u+4|z|^2 \frac{\partial^2}{\partial t^2} u +4\frac{\partial}{\partial t}(Tu), \quad \text{where } T=\sum_{i=1}^n y_i\frac{\partial}{\partial x_i}-x_i\frac{\partial}{\partial y_i}.\]
There exists a natural family of dilations on \(\mathbb{H}_n\), \(\delta_\lambda : \mathbb{R}^{2n+1}\to \mathbb{R}^{2n+1}\), \(\delta_\lambda(x,y,t)=(\lambda x , \lambda y, \lambda^2 t)\), with respect to which the vector field \(\nabla_\mathbb{H}\) is homogeneous of degree one.
Let \(Q = 2n + 2\) be the homogeneous dimension, and \(\rho(z,t) = (|z|^4 + t^2)^{\frac{1}{4}}\) denote the Koranyi norm. It's straightforward to verify that \(\rho\) is homogeneous of degree one with respect to \(\delta_\lambda\).
Furthermore, considering the Heisenberg \(p\)-sub-Laplacian, \(\Delta_{p,\mathbb{H}} = \text{div}_\mathbb{H}(|\nabla_\mathbb{H} u|^{p-2}\nabla_\mathbb{H} u)\), it is well known that (see \cite{BT})
\[
\begin{cases} 
\Delta_{p,\mathbb{H}} \rho^{\frac{p-Q}{p-1}} = 0 \text{ in } \mathbb{H}_n & \text{if } p \neq Q \\
\Delta_{Q,\mathbb{H}} (-\ln \rho) = 0 \text{ in } \mathbb{H}_n & \text{if } p = Q.
\end{cases}
\]

For the sake of completeness, we also observe that \(|\nabla_\mathbb{H}\rho|^2 = \frac{|z|^2}{\rho^2}\).
We can apply all the results from Section \ref{SEC: Main Results} to this context.

\begin{theorem}[Weighted Poincaré inequality]\label{6Heisenebrg: THM: Disuguaglianza di Poicnarè pesata}
Let \( p \geq 2 \), \(\al\geq 0\) and \( \theta \geq 1 \) be fixed. For every \(u\in W_0^{1,p}(B^\rho_R, |\nabla_\mathbb{H} \rho|^\al \rho^{\theta-Q})\), the following inequalities hold:
\begin{equation}\label{6heisenebrg: EQ: THM: Weighted Poincaré Inequality}
\left(\frac{\nu_1(p,\theta)}{R}\right)^p\int_{B_R^\rho} \frac{|u|^p}{\rho^{Q-\theta}}|\nabla_\mathbb{H} \rho|^{\alpha+p}\,dx 
\leq \int_{B_R^\rho} \left|\nabla_\mathbb{H} u \cdot\frac{\nabla_\mathbb{H} \rho}{|\nabla_\mathbb{H} \rho|}\right|^p \frac{|\nabla_\mathbb{H} \rho|^\alpha}{\rho^{Q-\theta}}\,dx
\leq \int_{B_R^\rho} \frac{|\nabla_\mathbb{H} u|^p}{\rho^{Q-\theta}}|\nabla_\mathbb{H} \rho |^\alpha\,dx
\end{equation}
Moreover, the chain of inequalities is sharp since the function
\( u=\varphi\left(\frac{\nu_1(p,\theta)}{R}\rho\right) \in W_0^{1,p}(B_R^\rho, |\nabla_\mathbb{H} \rho|^\al \rho^{\theta-Q}) \)
attains both equalities.
\end{theorem}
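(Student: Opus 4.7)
The plan is to recognize that this theorem is not a new result to be proved from scratch, but a direct specialization of the abstract Theorem \ref{2: THM: Disuguaglianza di Poicnarè pesata} to the particular geometric setting of the Heisenberg group $\mathbb{H}_n$. The only real work is to verify that the Koranyi norm $\rho(z,t) = (|z|^4 + t^2)^{1/4}$ satisfies the four structural hypotheses imposed on the function $d$ in Section \ref{SEC: Main Results}, after which the conclusion follows verbatim.

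First I would check the regularity and positivity: $\rho \in C(\mathbb{H}_n) \cap C^\infty(\mathbb{H}_n \setminus \{0\})$ is immediate from the formula (the radicand is smooth and strictly positive off the origin, and the outer fourth root is smooth away from zero), and $\rho(z,t) = 0$ iff $(z,t) = 0$. Next I would verify degree-one homogeneity with respect to the anisotropic dilation $\delta_\lambda(z,t) = (\lambda z, \lambda^2 t)$: a direct computation gives $\rho(\delta_\lambda(z,t)) = (\lambda^4 |z|^4 + \lambda^4 t^2)^{1/4} = \lambda \rho(z,t)$. The compatibility of $\nabla_\mathbb{H}$ with these dilations as a vector field of degree one is a standard fact already recalled in the excerpt.

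The only nontrivial hypothesis is condition (4), the $\mathcal{L}_p$-harmonicity of the fundamental-solution type expressions $\rho^{(p-Q)/(p-1)}$ (for $p\neq Q$) and $-\ln \rho$ (for $p=Q$) on $\mathbb{H}_n \setminus \{0\}$. This, however, is precisely the classical computation of the fundamental solution of the Heisenberg $p$-sub-Laplacian, for which the paper already cites \cite{BT}, and which is explicitly quoted just above the statement. Thus condition (4) holds.

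With all four hypotheses on $d := \rho$ verified, Theorem \ref{2: THM: Disuguaglianza di Poicnarè pesata} applies directly and yields the full chain of inequalities in (\ref{6heisenebrg: EQ: THM: Weighted Poincaré Inequality}), together with the sharpness statement and the explicit extremizer $u = \varphi\bigl(\tfrac{\nu_1(p,\theta)}{R}\rho\bigr)$. There is no genuine obstacle here; the only point that deserves a brief mention is that the function space $W_0^{1,p}(B^\rho_R, |\nabla_\mathbb{H}\rho|^\alpha \rho^{\theta-Q})$ is defined exactly as in (\ref{INTRO: EQ: Norma Sobolev}) with the horizontal gradient $\nabla_\mathbb{H}$ in place of $\nabla_\mathcal{L}$, so no modification of the approximation argument in the proof of Theorem \ref{2: THM: Disuguaglianza di Poicnarè pesata} is required.
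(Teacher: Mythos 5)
Your proposal is correct and matches the paper's own treatment exactly: the paper likewise presents this theorem as a direct specialization of Theorem \ref{2: THM: Disuguaglianza di Poicnarè pesata}, after verifying (as you do) that the Koranyi norm $\rho$ satisfies the regularity, positivity, degree-one homogeneity, and $\mathcal{L}_p$-harmonicity hypotheses, the last of which is quoted from \cite{BT}. No further argument is needed.
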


\begin{theorem}[Hardy Improvement of type I]\label{6Heisenebrg: THM: Hardy Improvement of type I}
Let \(p \geq 2\), \(\alpha \geq 0\), \(\theta \in \mathbb{R}\), and \(\lambda_p\) be the constant defined in Theorem \ref{4: THM: Stime dal basso}. For every \(u \in C^\infty_c(B^\rho_R \setminus \{0\})\), we have:

\[
\int_{B^\rho_R} \frac{|\nabla_\mathbb{H} u|^p}{\rho^{p(\theta - 1)}}|\nabla_\mathbb{H} \rho|^\alpha \, dx \geq \int_{B^\rho_R} \left|\nabla_\mathbb{H} u \cdot \frac{\nabla_\mathbb{H} \rho}{|\nabla_\mathbb{H} \rho|}\right|^p \frac{|\nabla_\mathbb{H} \rho|^\alpha}{\rho^{p(\theta-1)}} \, dx
\]
\begin{equation}\label{6Heisenebrg: EQ: THM: Hardy Improvement of Type I}
\geq \left|\frac{Q - p\theta}{p}\right|^p \int_{B^\rho_R} \frac{|u|^p}{\rho^{p\theta}}|\nabla_\mathbb{H} \rho|^{\alpha + p} \, dx + \lambda_p \frac{\nu_1(p,\theta)^p}{R^p} \int_{B^\rho_R} \frac{|u|^p}{\rho^{p(\theta - 1)}}|\nabla_\mathbb{H} \rho |^{\alpha + p} \, dx.
\end{equation}
\end{theorem}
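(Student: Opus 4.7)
The strategy is to recognize that this theorem is a specialization of the general Hardy Improvement of Type I (Theorem \ref{4: THM: Hardy Improvement of type I}), and that the proof reduces to verifying that the Heisenberg setting fits into the abstract framework set up in Section \ref{SEC: Main Results}. So the plan is not to redo the inequality-identity-Poincaré chain, but simply to check each structural hypothesis required of \((\nabla_\L, \delta_\lambda, d)\), and then invoke the general theorem verbatim with \(\nabla_\L = \nabla_{\hn}\) and \(d = \rho\) (the Koranyi norm).

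First I would go through the list of assumptions on \(d\) given just before Theorem \ref{2: THM: Disuguaglianza di Poicnarè pesata}. The continuity \(\rho \in C(\mathbb{R}^N) \cap C^\infty(\mathbb{R}^N\setminus\{0\})\), the positivity with \(\rho(x,y,t) = 0\) iff \((x,y,t)=0\), and the degree-one homogeneity \(\rho(\delta_\lambda(z,t)) = \lambda \rho(z,t)\) are immediate from the explicit form \(\rho = (|z|^4 + t^2)^{1/4}\) and from the dilation structure \(\delta_\lambda(z,t) = (\lambda z, \lambda^2 t)\). For property (4), the identities \(\Delta_{p,\mathbb{H}}\rho^{(p-Q)/(p-1)} = 0\) in \(\hn\setminus\{0\}\) for \(p \neq Q\) and \(\Delta_{Q,\mathbb{H}}(-\ln\rho) = 0\) for \(p = Q\) are quoted in the paragraph preceding the theorem (with reference to \cite{BT}), so no new computation is needed here.

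With these hypotheses verified, I would also observe that \(\nabla_{\hn}\) is itself homogeneous of degree one with respect to \(\delta_\lambda\), which is a direct check on the two families of vector fields \(X_i = \partial_{x_i} + 2y_i\partial_t\) and \(Y_i = \partial_{y_i} - 2x_i\partial_t\). Hence the abstract apparatus of Lemma \ref{NEW: LEMMA: p-Laplaciano di d} and Lemma \ref{NEW Gen: Lemma: Misura volume e superficie palla} applies verbatim in \(\hn\), and in particular the sub-elliptic ball \(B_R^\rho = \{\rho \leq R\}\) is compact with smooth boundary (away from characteristic points this was handled via Remark \ref{Remark smooth boundary}; the generic \(R\) argument via Sard goes through since \(\rho\) is smooth off the origin).

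Once the setting is matched, the conclusion (\ref{6Heisenebrg: EQ: THM: Hardy Improvement of Type I}) is exactly the statement of Theorem \ref{4: THM: Hardy Improvement of type I} with \(\nabla_\L\) replaced by \(\nabla_{\hn}\), \(d\) replaced by \(\rho\), and \(Q = 2n+2\). The first inequality is the pointwise Cauchy–Schwarz estimate \(|\nabla_{\hn} u|^p \geq |\nabla_{\hn} u \cdot \nabla_{\hn}\rho/|\nabla_{\hn}\rho||^p\), and the second follows by inserting the specific \(f, g\) chosen in the proof of Theorem \ref{4: THM: Hardy Improvement of type I}, applying the first-type lower bound (\ref{4: EQ: THM: Stima del primo tipo}), and invoking the weighted Poincaré inequality (\ref{6heisenebrg: EQ: THM: Weighted Poincaré Inequality}). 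The only potential subtlety — and where I would be most careful — is ensuring that the boundary term arising in the integration by parts in the Heisenberg Poincaré identity vanishes in the limit as \(\varepsilon \to 0^+\); but this is controlled by the bound \(|\nabla_{\hn}\rho|^2 = |z|^2/\rho^2 \leq 1\) combined with the same \(\varepsilon\)-estimate used in the general proof of Theorem \ref{2: THM : Identità di Poincarè radiale}, so no new ingredient is required.
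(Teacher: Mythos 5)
Your approach is exactly what the paper does: Section~\ref{SEC: Applicazioni particolari} proves all the specialized theorems by verifying that the triple \((\nabla_\L, \delta_\lambda, d)\) in each example satisfies the structural hypotheses of Section~\ref{SEC: Main Results}, and then invoking Theorems~\ref{2: THM: Disuguaglianza di Poicnarè pesata}, \ref{4: THM: Hardy Improvement of type I}, \ref{4: THM: Hardy Improvement of Type II} verbatim. Your check-list (continuity and homogeneity of \(\rho\), homogeneity of \(\nabla_{\hn}\), the \(p\)-harmonicity of \(\rho^{(p-Q)/(p-1)}\) from \cite{BT}, and the bound \(|\nabla_{\hn}\rho|\leq 1\)) is complete and matches the paper.

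One point you should flag explicitly, though: your derivation (correctly following the proof of Theorem~\ref{4: THM: Hardy Improvement of type I}) produces the constant \(\lambda_p\,\nu_1(p,p)^p/R^p\) in the last term, because the Poincaré inequality is applied to \(v = u\,d^{(Q-p\theta)/p}\) with weight \(d^{-(Q-p)}\), forcing the parameter in \(\nu_1(p,\cdot)\) to be \(p\), not \(\theta\). The statement as printed has \(\nu_1(p,\theta)^p\), which does not match the general theorem (and is not even well-defined for \(\theta < 1\), while the theorem allows all \(\theta\in\mathbb{R}\)). This is a typo inherited from the paper's Heisenberg statement (the Euclidean version, Theorem~\ref{6Euc: THM: Hardy Improvement of type I}, correctly has \(\nu_1(p,p)\)). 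You say your conclusion is ``exactly the statement of Theorem~\ref{4: THM: Hardy Improvement of type I}'' without noticing the mismatch, so the argument as written would not yield the printed constant; it yields the correct one, \(\nu_1(p,p)\). A careful reading should record this discrepancy rather than gloss over it.
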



\begin{theorem}[Hardy Improvement of Type II]\label{6Heisnebrg: THM: Hardy Improvement of Type II}
Let \( p \geq 2 \), \(\al\geq 0\), \( \theta \in \mathbb{R} \), and \( z_0 \) be the first zero of the Bessel function \( J_0(r) \). Then, for every \( u \in C^\infty_c(B_R^\rho \setminus \{0\}) \), we have:

\[
\int_{B^\rho_R} \frac{|\nabla_\mathbb{H} u|^p}{\rho^{p(\theta - 1)}}|\nabla_\mathbb{H} \rho|^\alpha \, dx \geq \int_{B^\rho_R} \left|\nabla_\mathbb{H} u \cdot \frac{\nabla_\mathbb{H} \rho}{|\nabla_\mathbb{H} \rho|}\right|^p \frac{|\nabla_\mathbb{H} \rho|^\alpha}{\rho^{p(\theta-1)}} \, dx
\]
\begin{equation}\label{6Heisneberg: EQ: THM: Hardy improvement of type II}
\geq \left|\frac{Q - p\theta}{p}\right|^p \int_{B^\rho_R} \frac{|u|^p}{\rho^{p\theta}}|\nabla_\mathbb{H} \rho|^{\alpha + p} \, dx + \frac{2}{p} \left| \frac{Q - p\theta}{p} \right|^{p-2} \frac{z_0^2}{R^2} \int_{B^\rho_R} \frac{|u|^p}{\rho^{p\theta - 2}}|\nabla_\mathbb{H} \rho|^{\alpha + p} \, dx
\end{equation}
\end{theorem}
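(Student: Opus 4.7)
The strategy is to recognize that Theorem \ref{6Heisnebrg: THM: Hardy Improvement of Type II} is a direct specialization of the general Theorem \ref{4: THM: Hardy Improvement of Type II} to the Heisenberg setting. Hence, the entire task reduces to verifying that the Koranyi norm $\rho(z,t) = (|z|^4 + t^2)^{1/4}$ fulfills the four hypotheses placed on the function $d$ in Section \ref{SEC: Main Results}, with $\nabla_\L$ taken to be $\nabla_{\hn}$ and $\delta_\lambda(x,y,t) = (\lambda x, \lambda y, \lambda^2 t)$.

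First, I would verify conditions 1--3 directly: $\rho$ is continuous on $\hn$ and smooth off the origin, it is strictly positive except at $0$, and homogeneity of degree one with respect to $\delta_\lambda$ is the immediate computation
\[
\rho(\delta_\lambda(z,t)) = \bigl(\lambda^4|z|^4 + \lambda^4 t^2\bigr)^{1/4} = \lambda\,\rho(z,t).
\]
Condition 4 -- the fundamental-solution property $\Delta_{p,\hn}\rho^{(p-Q)/(p-1)} = 0$ in $\hn\setminus\{0\}$ for $p\ne Q$ and $\Delta_{Q,\hn}(-\ln\rho) = 0$ for $p = Q$ -- is precisely the classical identity recalled (with reference \cite{BT}) in the paragraph immediately preceding the theorem, so nothing new needs to be proved here.

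Once these four hypotheses are in place, Theorem \ref{4: THM: Hardy Improvement of Type II} applies verbatim with $d = \rho$ and $\nabla_\L = \nabla_{\hn}$, producing exactly the right-hand chain claimed in \eqref{6Heisneberg: EQ: THM: Hardy improvement of type II}. The first inequality in the chain (between $|\nabla_{\hn} u|^p$ and $|\nabla_{\hn} u \cdot \nabla_{\hn}\rho / |\nabla_{\hn}\rho||^p$) is simply the pointwise Cauchy--Schwarz bound integrated against the non-negative weight $|\nabla_{\hn}\rho|^\alpha \rho^{-p(\theta-1)}$, and needs no further argument.

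There is no real obstacle: the analytic heart of the result -- the identity \eqref{2: EQ: Idenità L^p In LD}, the radial Poincaré identity (Theorem \ref{2: THM : Identità di Poincarè radiale}), and the pointwise estimate of Proposition \ref{STIME: PROP: STIMA DEL SECONDO TIPO} -- has already been done in the abstract framework, and in this subsection the only task is the (standard) bookkeeping that the Koranyi gauge fits that framework. If one wished to write the argument in a self-contained way, the cleanest route would be to insert the choice
\[
f = \frac{\nabla_{\hn} u}{\rho^{\theta-1}}\cdot \frac{\nabla_{\hn}\rho}{|\nabla_{\hn}\rho|^{(p-\alpha)/p}}, \qquad g = -\frac{Q-p\theta}{p}\,\frac{u}{\rho^\theta}|\nabla_{\hn}\rho|^{(p+\alpha)/p}
\]
into the $L^p$-identity \eqref{2: EQ: Idenità L^p In LD}, use Lemma \ref{NEW: LEMMA: p-Laplaciano di d} (which, thanks to condition 4, gives $\Delta_{p,\hn}\rho = (Q-1)|\nabla_{\hn}\rho|^p/\rho$ and $\nabla_{\hn}|\nabla_{\hn}\rho|\cdot\nabla_{\hn}\rho = 0$) to recover the weighted Hardy constant $|(Q-p\theta)/p|^p$, and then invoke the weighted Poincaré inequality \eqref{6heisenebrg: EQ: THM: Weighted Poincaré Inequality} on the quantity $|ud^{(Q-p\theta)/p}|^{p/2}$ to produce the $z_0^2/R^2$ remainder term -- which is exactly the derivation already carried out in the proof of Theorem \ref{4: THM: Hardy Improvement of Type II}.
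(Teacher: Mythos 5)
Your proposal is correct and matches the paper's own approach: the paper simply verifies in the paragraph preceding the theorem that the Koranyi norm $\rho = (|z|^4+t^2)^{1/4}$, the horizontal gradient $\nabla_{\hn}$, and the dilations $\delta_\lambda(x,y,t)=(\lambda x,\lambda y,\lambda^2 t)$ satisfy the four structural hypotheses of Section \ref{SEC: Main Results} (citing \cite{BT} for the $\L_p$-harmonicity of $\rho^{(p-Q)/(p-1)}$), and then invokes Theorem \ref{4: THM: Hardy Improvement of Type II} verbatim with $d=\rho$. Your optional self-contained sketch also correctly retraces the proof of the general theorem, so there is no gap.
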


\subsection{The Heisenberg-Greiner operator}
In \(\mathbb{R}^{2n+1}\), let's consider the vector fields:

\begin{align*}
X_i &= \frac{\partial}{\partial x_i} + 2\gamma y_i |z|^{2\gamma - 2} \frac{\partial}{\partial t}\quad i=1,\ldots,n \\
Y_i &= \frac{\partial}{\partial y_i} - 2\gamma x_i |z|^{2\gamma - 2} \frac{\partial}{\partial t}
\end{align*}

Here, \(\gamma \geq 1\) is a fixed parameter, and \((z,t) = (x,y,t)\) represents a generic element of \(\mathbb{R}^{2n} \times \mathbb{R}\). It's noteworthy that for \(p=2\) and \(\gamma=1\), the operator \(\mathcal{L}_p\) corresponds to the Heisenberg sub-Laplacian \(\Delta_\mathbb{H}\). When \(p=2\) and \(\gamma > 1\), \(\mathcal{L}_p\) is a Greiner operator, \cite{G}.

Let us define the function \(\rho(z,t) = (|z|^{4\gamma} + t^2)^{\frac{1}{4\gamma}}\) and observe that both \(\rho\) and the vector fields \(X_i, Y_i\) are homogeneous of degree one with respect to the dilation family defined as follows:

\[
\delta_\lambda : \mathbb{R}^{2n+1} \to \mathbb{R}^{2n+1}, \quad \delta_\lambda(x,y,t) = (\lambda x, \lambda y, \lambda^{2\gamma} t).
\]

Let \(Q = 2n + 2\gamma\) denote the homogeneous dimension, and let \(\Gamma_p(z,t)\) be defined as:

\[
\Gamma_p(z,t) = \begin{cases} 
\rho^{\frac{p-Q}{p-1}} & \text{se } p \neq Q \\
-\ln \rho & \text{se } p = Q.
\end{cases}
\]

It is a well-known fact that \(\Gamma_p\) is \(\mathcal{L}_p\)-harmonic in \(\mathbb{R}^{2n+1}\) (see \cite{DA0} and \cite{ZN}). Moreover, a simple calculation shows that \(|\nabla_{\mathcal{L}} \rho| = \frac{|z|^{2\gamma-1}}{\rho^{2\gamma - 1}}\). We can now apply the results of Section \ref{SEC: Main Results}.

\begin{theorem}[Weighted Poincaré inequality]\label{6HeisenebrgGreiner: THM: Disuguaglianza di Poicnarè pesata}
Let \( p \geq 2 \), \(\al\geq 0\) and \( \theta \geq 1 \) be fixed. For every \( u \in W_0^{1,p}(B^\rho_R, |\nabla_\L \rho|^\al \rho^{\theta-Q}) \), the following inequalities hold:
\begin{equation}\label{6heisenebrgGreiner: EQ: THM: Weighted Poincaré Inequality}
\left(\frac{\nu_1(p,\theta)}{R}\right)^p\int_{B_R^\rho} \frac{|u|^p}{\rho^{Q-\theta}}|\nabla_\mathcal{L} \rho|^{\alpha+p}\,dx 
\leq \int_{B_R^\rho} \left|\nabla_\mathcal{L} u \cdot\frac{\nabla_\mathcal{L} \rho}{|\nabla_\mathcal{L} \rho|}\right|^p \frac{|\nabla_\mathcal{L} \rho|^\alpha}{\rho^{Q-\theta}}\,dx
\leq \int_{B_R^\rho} \frac{|\nabla_\mathcal{L} u|^p}{\rho^{Q-\theta}}|\nabla_\mathcal{L} \rho |^\alpha\,dx
\end{equation}
Moreover, the chain of inequalities is sharp since the function
\( u=\varphi\left(\frac{\nu_1(p,\theta)}{R}\rho\right) \in W_0^{1,p}(B_R^\rho, |\nabla_\L \rho|^\al \rho^{\theta-Q}) \)
attains both equalities.
\end{theorem}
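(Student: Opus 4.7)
The plan is to verify that the Heisenberg--Greiner setting fits exactly into the general framework of Section \ref{SEC: Main Results}, and then to invoke Theorem \ref{2: THM: Disuguaglianza di Poicnarè pesata} directly. There is essentially no new analytic content to produce: the work consists in checking the four structural hypotheses on $d=\rho$.

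First, I would record the properties of the ambient structure. The family $\delta_\lambda(x,y,t)=(\lambda x,\lambda y,\lambda^{2\gamma}t)$ is a group of dilations on $\mathbb{R}^{2n+1}$ with exponents $\beta_1=\dots=\beta_{2n}=1$ and $\beta_{2n+1}=2\gamma$, so the homogeneous dimension is $Q=2n+2\gamma$. A direct computation shows that each $X_i,Y_i$ is $\delta_\lambda$-homogeneous of degree one, hence $\nabla_\L=(X_1,\dots,X_n,Y_1,\dots,Y_n)$ fits the assumptions made before defining $\L$ and $\L_p$.

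Next, I would check the four properties required of the gauge $d:=\rho$. Continuity on $\mathbb{R}^{2n+1}$, positivity away from the origin, vanishing only at $0$, and $\delta_\lambda$-homogeneity of degree one follow immediately from the explicit formula $\rho(z,t)=(|z|^{4\gamma}+t^2)^{1/(4\gamma)}$. Smoothness on $\mathbb{R}^{2n+1}\setminus\{0\}$ is clear when $|z|\ne 0$; on the $t$-axis (where $|z|=0$ but $t\ne 0$), one uses that $\rho$ is a smooth function of $|z|^2$ and $t^2$ with $|z|^{4\gamma}$ a smooth function of $|z|^2$ for $\gamma\geq 1$, so $\rho$ is $C^\infty$ there as well. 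The crucial property (4), namely $\L_p\rho^{(p-Q)/(p-1)}=0$ for $p\ne Q$ and $\L_Q(-\ln\rho)=0$, is precisely the $\L_p$-harmonicity of $\Gamma_p$ in $\mathbb{R}^{2n+1}\setminus\{0\}$; the paper cites \cite{DA0} and \cite{ZN} for this, so it may be invoked as a known fact.

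With all hypotheses verified, the assertion is immediate: Theorem \ref{2: THM: Disuguaglianza di Poicnarè pesata} applied with $\nabla_\L=(X_1,\dots,X_n,Y_1,\dots,Y_n)$ and $d=\rho$ yields the chain of inequalities (\ref{6heisenebrgGreiner: EQ: THM: Weighted Poincaré Inequality}). For the sharpness part, the same theorem identifies the extremizer as $u=\varphi\!\left(\tfrac{\nu_1(p,\theta)}{R}d\right)$, which here becomes $\varphi\!\left(\tfrac{\nu_1(p,\theta)}{R}\rho\right)$; membership in the weighted Sobolev space $W_0^{1,p}(B_R^\rho,|\nabla_\L\rho|^\al\rho^{\theta-Q})$ and attainment of both equalities is established in the proof of Theorem \ref{2: THM: Disuguaglianza di Poicnarè pesata} and transfers without change. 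The only potential obstacle I anticipate is a careful justification of property (4) at points of the characteristic set $\{z=0\}$ where $|\nabla_\L\rho|$ vanishes; this is handled in the cited references via the explicit fundamental solution for the Heisenberg--Greiner operator, so I would simply appeal to those results rather than reprove them.
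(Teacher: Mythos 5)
Your proposal is correct and follows exactly the paper's (implicit) argument: verify the structural hypotheses of Section \ref{SEC: Main Results} for the gauge $\rho(z,t)=(|z|^{4\gamma}+t^2)^{1/(4\gamma)}$ and the Greiner vector fields, cite \cite{DA0} and \cite{ZN} for the $\L_p$-harmonicity of $\Gamma_p$, and then specialize Theorem \ref{2: THM: Disuguaglianza di Poicnarè pesata} to $d=\rho$. The concern you flag about the characteristic set $\{z=0\}$, where $|\nabla_\L\rho|$ vanishes, is a reasonable one, and the paper treats it the same way you propose, by deferring to the cited references.
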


\begin{theorem}[Hardy Improvement of type I]\label{6HeisenebrgGreiner: THM: Hardy Improvement of type I}
Let \(p \geq 2\), \(\alpha \geq 0\), \(\theta \in \mathbb{R}\), and \(\lambda_p\) be the constant defined in Theorem \ref{4: THM: Stime dal basso}. For every \(u \in C^\infty_c(B^\rho_R \setminus \{0\})\), we have:

\[
\int_{B^\rho_R} \frac{|\nabla_\mathcal{L} u|^p}{\rho^{p(\theta - 1)}}|\nabla_\mathcal{L} \rho|^\alpha \, dx \geq \int_{B^\rho_R} \left|\nabla_\mathcal{L} u \cdot \frac{\nabla_\mathcal{L} \rho}{|\nabla_\mathcal{L} \rho|}\right|^p \frac{|\nabla_\mathcal{L} \rho|^\alpha}{\rho^{p(\theta-1)}} \, dx
\]
\begin{equation}\label{6HeisenebrgGreiner: EQ: THM: Hardy Improvement of Type I}
\geq \left|\frac{Q - p\theta}{p}\right|^p \int_{B^\rho_R} \frac{|u|^p}{\rho^{p\theta}}|\nabla_\mathcal{L} \rho|^{\alpha + p} \, dx + \lambda_p \frac{\nu_1(p,\theta)^p}{R^p} \int_{B^\rho_R} \frac{|u|^p}{\rho^{p(\theta - 1)}}|\nabla_\mathcal{L} \rho |^{\alpha + p} \, dx.
\end{equation}
\end{theorem}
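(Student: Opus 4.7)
The plan is to reduce this theorem to the general Hardy Improvement of Type I (Theorem \ref{4: THM: Hardy Improvement of type I}) by verifying that the Heisenberg--Greiner setting fits into the abstract framework of Section \ref{SEC: Main Results}. Once the hypotheses on the vector fields $X_i, Y_i$ and on the homogeneous distance $\rho$ are checked, the inequality follows as a direct specialization.

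First, I would verify the structural assumptions on the framework. The vector fields $X_i$ and $Y_i$ defined here have coefficients $\sigma_{i,j}$ that are $C^\infty$ on $\mathbb{R}^{2n+1}\setminus\{0\}$ (due to the factor $|z|^{2\gamma-2}$ when $\gamma > 1$) and continuous on $\mathbb{R}^{2n+1}$, with $\partial_{x_j}\sigma_{i,j}$ continuous as well, so the formal adjoints $X_i^*, Y_i^*$ are well defined and the operator $\mathcal{L}_p = \mathrm{div}_{\mathcal{L}}(|\nabla_{\mathcal{L}} u|^{p-2}\nabla_{\mathcal{L}}u)$ is meaningful. The homogeneity of $\nabla_{\mathcal{L}}$ with respect to $\delta_\lambda(x,y,t) = (\lambda x, \lambda y, \lambda^{2\gamma}t)$ is a direct computation; in particular the homogeneous dimension is $Q = 2n + 2\gamma$.

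Second, I would check that the function $\rho(z,t) = (|z|^{4\gamma}+t^2)^{1/(4\gamma)}$ satisfies the four properties required of $d$: continuity on $\mathbb{R}^{2n+1}$, smoothness off the origin, strict positivity away from $0$, degree-one $\delta_\lambda$-homogeneity, and the crucial identity
\[
\mathcal{L}_p \rho^{\frac{p-Q}{p-1}} = 0 \ \text{ in } \mathbb{R}^{2n+1}\setminus\{0\} \ \text{ if } p\neq Q, \qquad \mathcal{L}_Q(-\ln\rho) = 0 \ \text{ if } p=Q.
\]
This last identity is precisely the $\mathcal{L}_p$-harmonicity of $\Gamma_p$ recalled just before the theorem statement, which is established in \cite{DA0, ZN}. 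The remaining properties are elementary verifications: continuity and smoothness follow from the explicit formula, positivity and vanishing only at the origin are immediate, and degree-one homogeneity follows from $|\delta_\lambda(z)|^{4\gamma} = \lambda^{4\gamma}|z|^{4\gamma}$ and $(\lambda^{2\gamma}t)^2 = \lambda^{4\gamma}t^2$.

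Having verified all the hypotheses, the conclusion is immediate: Theorem \ref{4: THM: Hardy Improvement of type I}, applied with $d = \rho$, $\nabla_{\mathcal{L}}$ the Heisenberg--Greiner horizontal gradient, and $Q = 2n+2\gamma$, yields exactly \eqref{6HeisenebrgGreiner: EQ: THM: Hardy Improvement of Type I}. The main (and only nontrivial) obstacle is checking the $\mathcal{L}_p$-harmonicity of the fundamental-solution-like function $\rho^{(p-Q)/(p-1)}$ (resp.\ $-\ln\rho$ when $p=Q$); since this is cited from the literature, the proof reduces to essentially a bookkeeping specialization of the general result, and no new calculation beyond invoking Theorem \ref{4: THM: Hardy Improvement of type I} is required.
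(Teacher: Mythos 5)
Your reduction is the same as the paper's: the theorem is stated as a direct specialization of Theorem~\ref{4: THM: Hardy Improvement of type I}, and the only real content is checking that $\nabla_{\mathcal{L}}$ and $\rho$ fit the framework, with the $\mathcal{L}_p$-harmonicity of $\rho^{(p-Q)/(p-1)}$ (resp.\ $-\ln\rho$) cited from the literature exactly as the paper cites \cite{DA0, ZN}. One small caveat worth flagging: a literal application of Theorem~\ref{4: THM: Hardy Improvement of type I} produces the constant $\nu_1(p,p)^p$ in the remainder term, not $\nu_1(p,\theta)^p$ as written in the displayed formula of the Heisenberg--Greiner statement; the remark following Theorem~\ref{4: THM: Hardy Improvement of type I} (which stresses that the constant depends only on $p$ and $R$, not on $\theta$) and the Euclidean version (Theorem~\ref{6Euc: THM: Hardy Improvement of type I}, which correctly has $\nu_1(p,p)^p$) indicate that the $\nu_1(p,\theta)$ here is a typo rather than a deliberate change, so your proof actually establishes the corrected statement.
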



\begin{theorem}[Hardy Improvement of Type II]\label{6HeisnebrgGreiner: THM: Hardy Improvement of Type II}
Let \( p \geq 2 \), \(\al\geq 0\), \( \theta \in \mathbb{R} \), and \( z_0 \) be the first zero of the Bessel function \( J_0(r) \). Then, for every \( u \in C^\infty_c(B_R^\rho \setminus \{0\}) \), we have:

\[
\int_{B^\rho_R} \frac{|\nabla_\mathcal{L} u|^p}{\rho^{p(\theta - 1)}}|\nabla_\mathcal{L} \rho|^\alpha \, dx \geq \int_{B^\rho_R} \left|\nabla_\mathcal{L} u \cdot \frac{\nabla_\mathcal{L} \rho}{|\nabla_\mathcal{L} \rho|}\right|^p \frac{|\nabla_\mathcal{L} \rho|^\alpha}{\rho^{p(\theta-1)}} \, dx
\]
\begin{equation}\label{6HeisnebergGreiner: EQ: THM: Hardy improvement of type II}
\geq \left|\frac{Q - p\theta}{p}\right|^p \int_{B^\rho_R} \frac{|u|^p}{\rho^{p\theta}}|\nabla_\mathcal{L} \rho|^{\alpha + p} \, dx + \frac{2}{p} \left| \frac{Q - p\theta}{p} \right|^{p-2} \frac{z_0^2}{R^2} \int_{B^\rho_R} \frac{|u|^p}{\rho^{p\theta - 2}}|\nabla_\mathcal{L} \rho|^{\alpha + p} \, dx
\end{equation}
\end{theorem}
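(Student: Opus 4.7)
The statement to prove is the Type II Hardy improvement specialized to the Heisenberg--Greiner setting. Since the general Theorem \ref{4: THM: Hardy Improvement of Type II} has already been established in the abstract framework of Section \ref{SEC: Main Results}, the plan is to reduce the claim to a verification that the Heisenberg--Greiner data fit that framework, and then invoke the general result directly.

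\textbf{Step 1: Check the structural assumptions on the vector fields.} I would first verify that the vector fields $X_i, Y_i$ defined above fall into the class considered at the start of Section \ref{SEC: Main Results}. Their coefficients $\sigma_{i,j}$ are smooth on $\rN$ (since $\gamma\geq 1$, the factor $|z|^{2\gamma-2}$ is at least continuous, and its relevant derivatives are continuous), so the formal adjoints $X_i^*, Y_i^*$ are well defined. Next, I would check that $\nabla_\L = (X_1,\dots,X_n,Y_1,\dots,Y_n)$ is homogeneous of degree one with respect to the anisotropic dilation $\delta_\lambda(x,y,t)=(\lambda x, \lambda y, \lambda^{2\gamma} t)$; this is the standard computation using that $|z|^{2\gamma-2}\,\partial_t$ transforms as $\lambda^{2\gamma-2}\cdot\lambda^{-2\gamma} = \lambda^{-2}$, matching the $\lambda\cdot\lambda^{-1}$ from $y_i\,\partial/\partial x_i$. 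This yields homogeneous dimension $Q = 2n+2\gamma$.

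\textbf{Step 2: Check the four assumptions on the gauge $d=\rho$.} I would confirm that $\rho(z,t)=(|z|^{4\gamma}+t^2)^{1/(4\gamma)}$ satisfies properties (1)--(4) in the general framework. Continuity on $\rN$ and smoothness on $\rN\setminus\{0\}$ are immediate; positivity away from the origin and the vanishing only at $0$ are clear; degree-one $\delta_\lambda$-homogeneity follows from a direct computation. Property (4) is the only non-trivial point, but it has been cited above: the fundamental solution $\Gamma_p$ of the Heisenberg--Greiner $p$-Laplacian is exactly $\rho^{(p-Q)/(p-1)}$ (resp.\ $-\ln\rho$ for $p=Q$), so $\L_p \rho^{(p-Q)/(p-1)}=0$ (resp.\ $\L_Q(-\ln\rho)=0$) in $\rN\setminus\{0\}$, invoking the references already cited.

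\textbf{Step 3: Apply the general Theorem.} Once the framework is in place, the conclusion of Theorem \ref{6HeisnebrgGreiner: THM: Hardy Improvement of Type II} is the verbatim instance of Theorem \ref{4: THM: Hardy Improvement of Type II} with $d=\rho$, $\nabla_\L = (X_1,\dots,X_n,Y_1,\dots,Y_n)$, and $Q=2n+2\gamma$. The first inequality in the displayed chain is the trivial Cauchy--Schwarz-type bound $|\nabla_\L u|\geq|\nabla_\L u\cdot \nabla_\L\rho/|\nabla_\L\rho||$, while the second is exactly \eqref{4: EQ: THM: Hardy improvement of type II}. No additional obstacle arises, as $C^\infty_c(B^\rho_R\setminus\{0\})$ is the natural test function class and the compactness of $B^\rho_R$ follows from the lemma in Section \ref{SEC POINCARE}.

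\textbf{Main obstacle.} The only genuinely substantial point is verification of assumption (4) for $\rho$, i.e.\ the $\L_p$-harmonicity of the fundamental solution. For $p=2$ and $\gamma=1$ this is the classical Folland result; for $p=2$ and general $\gamma\geq 1$ it is due to Greiner; for general $p$ it is the content of \cite{DA0}, \cite{ZN}. Once this is quoted, the rest is a direct translation from the general theorem.
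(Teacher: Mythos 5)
Your proposal matches the paper exactly: in Section~\ref{SEC: Applicazioni particolari} the paper does not give a separate proof of this theorem at all — it verifies that the Heisenberg--Greiner data ($\rho=(|z|^{4\gamma}+t^2)^{1/(4\gamma)}$, $\delta_\lambda(x,y,t)=(\lambda x,\lambda y,\lambda^{2\gamma}t)$, $Q=2n+2\gamma$, and the $\L_p$-harmonicity of $\Gamma_p$ cited from \cite{DA0}, \cite{ZN}) satisfy the standing assumptions of Section~\ref{SEC: Main Results}, and then states the result as an instance of Theorem~\ref{4: THM: Hardy Improvement of Type II}. Your three steps make this reduction explicit and identify the single non-trivial ingredient (assumption (4) on the gauge) correctly.
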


\subsection{Sub-Laplacian on Homogeneous Carnot group}
We begin by quoting some preliminary facts about the homogeneous Carnot group and refer the interested reader to \cite{BLU} for more detailed information on this subject.

\begin{definition}
 We say that a Lie group on \(\rN\), 
\(G=(\rN,\circ)\), is a (homogeneous) Carnot group  if the following properties hold: 
\begin{enumerate}
\item \(\rN\) can be split as \(\rN=\erre^{N_1}\times\ldots\times\erre^{N_r}\),  and the dilation \(\delta_\la\colon \rN\to\rN\)
\[
\delta_\la(x)=\delta_\la(x^{(1)},\ldots,x^{(r)})
=
(\lambda x^{(1)},\lambda^2 x^{(2)},\ldots,\lambda^r x^{(r)})
\quad x^{(i)}\in\erre^{N_i},
\]
is an automorphism of the group \(G\) for every \(\la>0\).
\item  If \(N_1\) is as above, let \(X_1,\ldots,X_{N_1}\) 
be the left invariant vector fields on \(G\) such
that \(\dyle X_j(0)\tl=\tl\frac{\pa}{\pa x_j}\biggl|_0\) for \(j=1,\ldots, N_1.\) Then
\[
\mathrm{rank}\left(\mathrm{Lie}\{X_1,\ldots,X_{N_1}\}(x)\right)=N\quad\text{  for every } x\in\rN.
\]
\end{enumerate}
\end{definition}

We denote by \(\nabla_\mathcal{L}\) the vector field \(\nabla_\mathcal{L} = (X_1, \ldots, X_{N_1})\). The canonical sub-Laplacian on \(G\) is the second-order differential operator defined by \(\mathcal{L}_2 = \sum_{i=1}^{N_1} X_i^2\), and for \(p \geq 2\), the \(p\)-sub-Laplacian operator is given by \(\mathcal{L}_p u = \sum_{i=1}^{N_1} X_i(|\nabla_\mathcal{L} u|^{p-2} X_i u) = \text{div}_\mathcal{L} (|\nabla_\mathcal{L} u|^{p-2} \nabla_\mathcal{L} u)\).

Some important properties of homogeneous Carnot groups are as follows: The Lebesgue measure on \(\mathbb{R}^N\) coincides with the bi-invariant Haar measure on \(G\). It is easy to verify that, denoting by \(Q = N_1 + 2N_2 + \ldots + rN_r\) the homogeneous dimension of \(G\), for any measurable subset \(E \subseteq \mathbb{R}^N\), we have \(|\delta_\lambda(E)| = \lambda^Q|E|\), here \(|E|\) represents the Lebesgue measure of \(E\). Furthermore, the vector fields \(X_1, \ldots, X_{N_1}\) are homogeneous of degree one with respect to \(\delta_\lambda\).\\

Special examples of Carnot groups are the Euclidean spaces \(\mathbb{R}^N\). A non-trivial example of a Carnot group is the Heisenberg group. If \(Q < 3\), then any Carnot group is the ordinary Euclidean space \(\mathbb{R}^Q\). From now on, we can assume \(Q \geq 3\).\\

We call a homogeneous norm on \(G\) every continuous function \(\rho: G \to [0, +\infty)\) such that 
\begin{align*}
&\rho(\delta_\lambda(x)) = \lambda \rho(x) \quad \text{for every} \ \lambda > 0 \ \text{and} \ x \in G \\
&\rho(x) > 0 \quad \text{if and only if} \ x \neq 0.
\end{align*}

While it is a well-known fact that there always exists a homogeneous norm \(\rho_2 \in C^\infty(G\setminus\{0\})\) such that \(\Gamma(x) = \rho_2^{2-Q}(x)\) is the fundamental solution for \(-\mathcal{L}_2\), it is not guaranteed that the same \(\rho_2\) is also a fundamental solution for \(\mathcal{L}_p\). We give the following definition (see \cite{DA0}).

\begin{definition}
We say that a Carnot group G is \textit{polarizable} if 
\[
\begin{cases} 
\L_p \rho_2^{\frac{p-Q}{p-1}} = 0 \text{ in } G\setminus\{0\} & \text{if } p \neq Q \\
\L_Q(-\ln \rho_2) = 0 \text{ in } G\setminus\{0\} & \text{if } p = Q.
\end{cases}
\]
\end{definition}

Examples of polarizable Carnot groups include the usual Euclidean spaces, the Heisenberg group, and H-type groups. This is proved in \cite{BT}.

\begin{theorem}[Weighted Poincaré inequality]\label{6Carnot: THM: Disuguaglianza di Poicnarè pesata}
Let \(G\) be a polarizable Carnot group and
let \( p \geq 2 \), \(\al\geq 0\) and \( \theta \geq 1 \) be fixed. For every \( u \in W_0^{1,p}(B^{\rho_2}_R, |\nabla_\L \rho_2|^\al \rho_2^{\theta-Q}) \), the following inequalities hold:
\begin{equation}\label{6Carnot: EQ: THM: Weighted Poincaré Inequality}
\left(\frac{\nu_1(p,\theta)}{R}\right)^p\int_{B_R^{\rho_2}} \frac{|u|^p}{\rho_2^{Q-\theta}}|\nabla_\mathcal{L} \rho_2|^{\alpha+p}\,dx 
\leq \int_{B_R^{\rho_2}} \left|\nabla_\mathcal{L} u \cdot\frac{\nabla_\mathcal{L} \rho_2}{|\nabla_\mathcal{L} \rho_2|}\right|^p \frac{|\nabla_\mathcal{L} \rho_2|^\alpha}{\rho_2^{Q-\theta}}\,dx
\leq \int_{B_R^{\rho_2}} \frac{|\nabla_\mathcal{L} u|^p}{\rho_2^{Q-\theta}}|\nabla_\mathcal{L} \rho_2 |^\alpha\,dx
\end{equation}
Moreover, the chain of inequalities is sharp since the function
\( u=\varphi\left(\frac{\nu_1(p,\theta)}{R}\rho_2\right) \in W_0^{1,p}(B_R^{\rho_2}, |\nabla_\L \rho_2|^\al \rho_2^{\theta-Q}) \)
attains both equalities.
\end{theorem}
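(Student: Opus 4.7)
The plan is to deduce this statement as a direct specialization of the general Theorem \ref{2: THM: Disuguaglianza di Poicnarè pesata}. Since the chain of inequalities (\ref{6Carnot: EQ: THM: Weighted Poincaré Inequality}) is literally the statement of (\ref{2: EQ: THM: Weighted Poincaré Inequality}) with the function $d$ replaced by $\rho_2$ and $\nabla_\L$ interpreted as the horizontal gradient on $G$, it suffices to verify that the polarizable Carnot setting fits into the abstract framework of Section \ref{SEC: Main Results}.

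First, I would check the structural hypotheses on the vector fields. On a homogeneous Carnot group $G = (\mathbb{R}^N, \circ)$, the left-invariant vector fields $X_1, \ldots, X_{N_1}$ that generate the first stratum have polynomial coefficients, hence are $C^\infty$, and by the definition of homogeneous Carnot group they are $\delta_\lambda$-homogeneous of degree one. Setting $h = N_1$, this is exactly the data needed to define $\nabla_\L$ and $\L_p$ as in Section \ref{SEC: Main Results}, with homogeneous dimension $Q = N_1 + 2N_2 + \ldots + rN_r$ provided by the group structure.

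Next, I would verify that the homogeneous norm $\rho_2$ satisfies the four properties required of $d$. Property (3), degree-one homogeneity, is built into the definition of a homogeneous norm. Property (2), positivity off the origin, is likewise part of the definition. Property (1), namely $\rho_2 \in C(\mathbb{R}^N) \cap C^\infty(\mathbb{R}^N \setminus \{0\})$, follows from the known regularity of the fundamental solution of $-\L_2$: since $\rho_2^{2-Q}$ is (up to a constant) the $\L_2$-fundamental solution, standard hypoellipticity results of Folland give $\rho_2 \in C^\infty(G\setminus\{0\})$, while continuity at $0$ is automatic from degree-one homogeneity. Property (4) is, by construction, the very definition of a polarizable Carnot group.

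Once all four hypotheses are in place, Theorem \ref{2: THM: Disuguaglianza di Poicnarè pesata} applies verbatim and yields both the chain of inequalities and the fact that $u = \varphi\!\left(\tfrac{\nu_1(p,\theta)}{R}\rho_2\right)$ saturates both inequalities (note that Lemma \ref{NEW: LEMMA: p-Laplaciano di d} and the membership of this function in $W_0^{1,p}(B_R^{\rho_2}, |\nabla_\L \rho_2|^\al \rho_2^{\theta-Q})$ were already established abstractly under assumptions (1)--(4), so no new verification is needed here). I do not expect any substantive obstacle: the only potentially delicate point is the $C^\infty$ regularity of $\rho_2$ off the origin, but this is a well-documented consequence of the polarizability assumption (see, e.g., \cite{BT}), so the proof reduces to a remark that the general framework is satisfied.
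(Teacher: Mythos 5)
Your proposal is correct and matches the paper's approach: the paper sets up the Carnot group subsection precisely so that $\nabla_\L = (X_1,\ldots,X_{N_1})$ and $d = \rho_2$ satisfy hypotheses (1)--(4) of the abstract framework (with (4) being exactly the polarizability assumption and the smoothness of $\rho_2$ off the origin following from hypoellipticity of $\L_2$), and then states the theorem as an instance of Theorem \ref{2: THM: Disuguaglianza di Poicnarè pesata}. The only cosmetic imprecision is the remark that ``continuity at $0$ is automatic from degree-one homogeneity''; strictly one also uses the two-sided comparability of $\rho_2$ with a reference homogeneous gauge, but since continuity on all of $G$ is built into the paper's definition of a homogeneous norm, nothing is actually missing.
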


\begin{theorem}[Hardy Improvement of type I]\label{6Carnot: THM: Hardy Improvement of type I}
Let \(G\) be a polarizable Carnot group.
Let \(p \geq 2\), \(\alpha \geq 0\), \(\theta \in \mathbb{R}\), and \(\lambda_p\) be the constant defined in Theorem \ref{4: THM: Stime dal basso}. For every \(u \in C^\infty_c(B^{\rho_2}_R \setminus \{0\})\), we have:

\[
\int_{B^{\rho_2}_R} \frac{|\nabla_\mathcal{L} u|^p}{\rho_2^{p(\theta - 1)}}|\nabla_\mathcal{L} \rho_2|^\alpha \, dx \geq \int_{B^{\rho_2}_R} \left|\nabla_\mathcal{L} u \cdot \frac{\nabla_\mathcal{L} \rho_2}{|\nabla_\mathcal{L} \rho_2|}\right|^p \frac{|\nabla_\mathcal{L} \rho_2|^\alpha}{\rho_2^{p(\theta-1)}} \, dx
\]
\begin{equation}\label{6Carnot: EQ: THM: Hardy Improvement of Type I}
\geq \left|\frac{Q - p\theta}{p}\right|^p \int_{B^{\rho_2}_R} \frac{|u|^p}{\rho_2^{p\theta}}|\nabla_\mathcal{L} \rho_2|^{\alpha + p} \, dx + \frac{2}{p} \left| \frac{Q - p\theta}{p} \right|^{p-2} \frac{z_0^2}{R^2} \int_{B^{\rho_2}_R} \frac{|u|^p}{\rho_2^{p\theta - 2}}|\nabla_\mathcal{L} \rho_2|^{\alpha + p} \, dx
\end{equation}
\end{theorem}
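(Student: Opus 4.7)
The plan is to observe that this theorem is a direct specialization of the general Hardy Improvement of Type II (Theorem \ref{4: THM: Hardy Improvement of Type II}) to the setting where the vector field is \(\nabla_\mathcal{L}\) on a polarizable Carnot group and the homogeneous norm is \(d = \rho_2\). The entire work therefore reduces to verifying that the four structural hypotheses imposed on \(d\) in Section \ref{SEC: Main Results} hold in this concrete context; after that, the inequality is obtained by invoking the general theorem verbatim.

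First, I would check the regularity and positivity hypotheses: \(\rho_2\in C(G)\cap C^\infty(G\setminus\{0\})\) is stated explicitly in the paragraph preceding the definition of polarizability, and \(\rho_2(x)>0\) with \(\rho_2(x)=0\) iff \(x=0\) is part of the definition of a homogeneous norm on \(G\). Second, the degree-one homogeneity \(\rho_2(\delta_\lambda(x))=\lambda\rho_2(x)\) is also part of that same definition. Third, and most importantly, the \(\mathcal{L}_p\)-harmonicity condition
\[
\mathcal{L}_p\rho_2^{\frac{p-Q}{p-1}}=0 \quad(p\neq Q),\qquad \mathcal{L}_Q(-\ln\rho_2)=0\quad(p=Q),
\]
in \(G\setminus\{0\}\) is precisely the definition of \(G\) being polarizable, which is assumed in the hypothesis.

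Thus all four properties required to build the general machinery are in force, and in particular Lemma \ref{NEW: LEMMA: p-Laplaciano di d} applies to \(\rho_2\), yielding
\[
\mathcal{L}_p\rho_2=\frac{(Q-1)|\nabla_\mathcal{L}\rho_2|^p}{\rho_2},\qquad \nabla_\mathcal{L}|\nabla_\mathcal{L}\rho_2|\cdot\nabla_\mathcal{L}\rho_2=0,
\]
which are the only structural identities used in the proof of the abstract Theorem \ref{4: THM: Hardy Improvement of Type II}. Consequently, for any \(u\in C^\infty_c(B_R^{\rho_2}\setminus\{0\})\), choosing in the \(L^p\) identity (\ref{2: EQ: Idenità L^p In LD}) the functions
\[
f=\frac{\nabla_\mathcal{L} u}{\rho_2^{\theta-1}}\cdot\frac{\nabla_\mathcal{L}\rho_2}{|\nabla_\mathcal{L}\rho_2|^{(p-\alpha)/p}},\qquad g=-\left(\frac{Q-p\theta}{p}\right)\frac{u}{\rho_2^\theta}|\nabla_\mathcal{L}\rho_2|^{(p+\alpha)/p},
\]
and using the lower bound from Proposition \ref{STIME: PROP: STIMA DEL SECONDO TIPO} followed by the Weighted Poincaré inequality (Theorem \ref{6Carnot: THM: Disuguaglianza di Poicnarè pesata}) applied with \(\theta=1\) to the function \(u\,\rho_2^{(Q-p\theta)/p}\), yields the claimed inequality.

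The only mild point of attention is the pointwise Cauchy--Schwarz step
\(|\nabla_\mathcal{L} u\cdot\nabla_\mathcal{L}\rho_2/|\nabla_\mathcal{L}\rho_2||\le|\nabla_\mathcal{L} u|\), which gives the first inequality of the chain. No genuine obstacle arises beyond this verification of hypotheses; the proof is essentially a transcription of that of Theorem \ref{4: THM: Hardy Improvement of Type II} with \(d\) replaced by \(\rho_2\) and \(\nabla_\mathcal{L}\) the horizontal gradient on \(G\).
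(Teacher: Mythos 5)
Your overall strategy mirrors the paper's: check that $\rho_2$ on a polarizable Carnot group satisfies the four structural properties required of $d$ in Section \ref{SEC: Main Results} (continuity and smoothness off the origin, positivity and vanishing only at $0$, degree-one $\delta_\lambda$-homogeneity, and the $\L_p$-harmonicity of $\rho_2^{(p-Q)/(p-1)}$, the last being precisely the definition of polarizability), and then invoke the abstract theorem with $d=\rho_2$. You also correctly observed that, despite the label ``Type I,'' the displayed inequality is in fact the Type II inequality (it carries the $\frac{2}{p}\left|\frac{Q-p\theta}{p}\right|^{p-2}\frac{z_0^2}{R^2}$ coefficient and the $\rho_2^{p\theta-2}$ power, and $\lambda_p$ never appears), so you rightly specialized Theorem \ref{4: THM: Hardy Improvement of Type II} rather than Theorem \ref{4: THM: Hardy Improvement of type I}; the paper's statement is evidently a copy-paste slip, and your reading resolves it correctly.

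One detail is misstated. The final Poincar\'e step is not ``with $\theta=1$ to the function $u\,\rho_2^{(Q-p\theta)/p}$.'' In the proof of Theorem \ref{4: THM: Hardy Improvement of Type II}, after applying the second-type lower bound one rewrites $|v|^{p-2}\bigl(\nabla_\L v\cdot\nabla_\L\rho_2/|\nabla_\L\rho_2|\bigr)^2$ as $\frac{4}{p^2}\bigl(\nabla_\L|v|^{p/2}\cdot\nabla_\L\rho_2/|\nabla_\L\rho_2|\bigr)^2$ and the weighted Poincar\'e inequality is then invoked with exponent $2$, parameter $\theta=2$ (so that $\nu_1(2,2)=z_0$), weight exponent $\alpha+p-2$, applied to the function $\bigl|u\,\rho_2^{(Q-p\theta)/p}\bigr|^{p/2}$, not to $u\,\rho_2^{(Q-p\theta)/p}$ itself. (For comparison, the genuine Type I proof does apply Poincar\'e directly to $u\,\rho_2^{(Q-p\theta)/p}$, but with parameter $\theta=p$, giving $\nu_1(p,p)$, not $\theta=1$.) With that correction your argument is a verbatim transcription of the abstract proof, as intended.
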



\begin{theorem}[Hardy Improvement of Type II]\label{6Carnot: THM: Hardy Improvement of Type II}
Let \(G\) be a polarizable Carnot group and
let \( p \geq 2 \), \( \alpha \geq 0\), \( \theta \in \mathbb{R} \), and \( z_0 \) be the first zero of the Bessel function \( J_0(r) \). Then, for every \( u \in C^\infty_c(B_R^{\rho_2} \setminus \{0\}) \), we have:

\[
\int_{B^{\rho_2}_R} \frac{|\nabla_\mathcal{L} u|^p}{\rho_2^{p(\theta - 1)}}|\nabla_\mathcal{L} \rho_2|^\alpha \, dx \geq \int_{B^{\rho_2}_R} \left|\nabla_\mathcal{L} u \cdot \frac{\nabla_\mathcal{L} \rho_2}{|\nabla_\mathcal{L} \rho_2|}\right|^p \frac{|\nabla_\mathcal{L} \rho_2|^\alpha}{\rho_2^{p(\theta-1)}} \, dx
\]
\begin{equation}\label{6Carnot: EQ: THM: Hardy improvement of type II}
\geq \left|\frac{Q - p\theta}{p}\right|^p \int_{B^{\rho_2}_R} \frac{|u|^p}{\rho_2^{p\theta}}|\nabla_\mathcal{L} \rho_2|^{\alpha + p} \, dx + \frac{2}{p} \left| \frac{Q - p\theta}{p} \right|^{p-2} \frac{z_0^2}{R^2} \int_{B^{\rho_2}_R} \frac{|u|^p}{\rho_2^{p\theta - 2}}|\nabla_\mathcal{L} \rho_2|^{\alpha + p} \, dx.
\end{equation}
\end{theorem}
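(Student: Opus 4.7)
The plan is to derive this theorem as an immediate specialization of the general Hardy Improvement of Type II (Theorem \ref{4: THM: Hardy Improvement of Type II}) applied with the choice $d = \rho_2$, where $\rho_2$ is the homogeneous norm associated with the fundamental solution of $-\mathcal{L}_2$. Once we verify that $\rho_2$ fulfills the four structural hypotheses imposed on $d$ in Section \ref{SEC: Main Results}, there is nothing left to prove: Theorem \ref{4: THM: Hardy Improvement of Type II} gives precisely inequality (\ref{6Carnot: EQ: THM: Hardy improvement of type II}).

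The verification of the four properties proceeds as follows. Property (2), namely $\rho_2 > 0$ off the origin and $\rho_2(0) = 0$, and property (3), the degree-one $\delta_\lambda$-homogeneity, are part of the definition of a homogeneous norm on $G$. Property (1) has two components: continuity on all of $G$, which is again part of the definition, and smoothness on $G \setminus \{0\}$, which follows from Folland's theorem that the fundamental solution $\Gamma = \rho_2^{2-Q}$ of $-\mathcal{L}_2$ on a stratified Lie group is $C^\infty$ away from its pole; since $\Gamma$ is smooth and strictly positive on $G \setminus \{0\}$, so is $\rho_2 = \Gamma^{1/(2-Q)}$. Property (4), which is the $\mathcal{L}_p$-harmonicity of $\rho_2^{(p-Q)/(p-1)}$ for $p \neq Q$ and of $-\ln \rho_2$ for $p = Q$, is exactly the definition of polarizability of $G$.

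With these hypotheses in hand, the general framework of Section \ref{SEC: Main Results} applies verbatim, and in particular Theorem \ref{4: THM: Hardy Improvement of Type II} yields the chain of inequalities claimed, with $d$ replaced by $\rho_2$ and $\nabla_\L$ interpreted as the horizontal gradient $(X_1,\ldots,X_{N_1})$ on $G$.

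The main (and only) potential obstacle is the smoothness of $\rho_2$ on $G \setminus \{0\}$, since the other properties are built into the definitions. This relies on a nontrivial regularity result for the fundamental solution of the sub-Laplacian, but for polarizable groups such as Euclidean spaces, the Heisenberg group, and H-type groups (cited in \cite{BT}) this is established and can be invoked directly. Thus the proof reduces to a single line: apply Theorem \ref{4: THM: Hardy Improvement of Type II} with $d = \rho_2$.
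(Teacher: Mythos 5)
Your proof is correct and matches the paper's (implicit) argument: the paper states these Carnot-group theorems as direct specializations of the general framework of Section~\ref{SEC: Main Results}, relying on the smoothness of $\rho_2$ off the origin and on polarizability to supply hypotheses (1)--(4) for $d=\rho_2$, exactly as you verify. Invoking Theorem~\ref{4: THM: Hardy Improvement of Type II} with $d=\rho_2$ then gives the claimed chain of inequalities.
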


\vspace{0.5cm}

\appendix

\section{Appendix}\label{Appendice A}
In this appendix, we provide a detailed proof of Proposition \ref{1: PROP: Esistenza Unicità e Oscillazioni} and Proposition \ref{1: THM: Teorema Fondamentale}.\\\\
From now on, we fix \(p \geq 2\). We recall the notation \(\phi_p(x) = |x|^{p-2}x\) for \(x \in \mathbb{R}\). 
%
It's easily verified that \(\phi_p(x)\in C^1(\erre)\), \(|\phi_p(x)| = \phi_p(|x|)\) and \(\frac{\pa}{\pa x}\phi_p(x)=(p-1)|x|^{p-2}\). 
If \(\phi_p(x)=y\), then \(x=\phi_{p'}(y)\) where \(\frac{1}{p}+\frac{1}{p'}=1\). Note that since \(p\geq 2\), \(\phi_{p'}(x)\) is not differentiable at \(x=0\), but \(\phi_{p'}(x) \in C^1(\mathbb{R}\diff0)\). Finally, both \(\phi_p(x)\) and \(\phi_{p'}(x)\) are increasing functions.

%
%
%
%
%
%

Here is the first Lemma.
\begin{lemma}\label{1: Lemma: Regolarità in 0}
Let \(\theta > 1\) and suppose we have a function \(h \in C^1[0,a]\), with \(a > 0\), such that \(r^{\theta-1}\phi_p(h') \in C^1(0,a)\) and \(h\) solves:
\begin{equation}\label{1: EQ: Lemma: Regolarità in 0}
(r^{\theta-1}\phi_p(h'))' + \la r^{\theta-1}\phi_p(h) = 0 \,\,\,\text{in } (0,a), \text{ for some \(\lambda > 0\)},
\end{equation}
then 
\begin{enumerate}
\item \(h'(0)=0\),

\item \(r^{\theta-1}\phi_p(h')\in C^1[0,a)\),

\item \((r^{\theta-1}\phi_p(h'))'(0)=0\).
\end{enumerate}
\end{lemma}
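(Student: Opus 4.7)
The plan is to rewrite the ODE in integrated form and read off all three conclusions from it. The only place $\theta > 1$ will be used is to make a boundary term at $0$ vanish.

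First I would integrate the ODE $(r^{\theta-1}\phi_p(h'))' = -\lambda r^{\theta-1}\phi_p(h)$ from $\varepsilon>0$ to $r\in(0,a)$, yielding
\[
r^{\theta-1}\phi_p(h'(r)) - \varepsilon^{\theta-1}\phi_p(h'(\varepsilon))
= -\lambda \int_\varepsilon^r s^{\theta-1}\phi_p(h(s))\,ds.
\]
Because $h\in C^1[0,a]$, the derivative $h'$ is bounded on $[0,a]$, so $|\varepsilon^{\theta-1}\phi_p(h'(\varepsilon))|\le \varepsilon^{\theta-1}\|h'\|_\infty^{p-1}\to 0$ as $\varepsilon\to 0^+$ (here I use $\theta>1$). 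The integrand on the right is continuous on $[0,a]$, so letting $\varepsilon\to 0^+$ gives the key identity
\[
r^{\theta-1}\phi_p(h'(r)) = -\lambda \int_0^r s^{\theta-1}\phi_p(h(s))\,ds \qquad\text{for all } r\in(0,a). \tag{$\ast$}
\]

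From $(\ast)$, conclusion (1) follows easily: bounding the right-hand side by $\tfrac{\lambda}{\theta}\|h\|_\infty^{p-1}\, r^\theta$ and dividing by $r^{\theta-1}$ yields $|\phi_p(h'(r))|\le \tfrac{\lambda}{\theta}\|h\|_\infty^{p-1}\,r$, hence $|h'(r)|\to 0$ as $r\to 0^+$. Since $h\in C^1[0,a]$, $h'$ is continuous at $0$, and therefore $h'(0)=0$.

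For (2) and (3), I would set
\[
F(r) := -\lambda \int_0^r s^{\theta-1}\phi_p(h(s))\,ds.
\]
By the fundamental theorem of calculus $F\in C^1[0,a)$ with $F'(r) = -\lambda r^{\theta-1}\phi_p(h(r))$, and in particular $F'(0)=0$ (again thanks to $\theta>1$). The identity $(\ast)$ says $r^{\theta-1}\phi_p(h'(r)) = F(r)$ on $(0,a)$, and the extension by $0$ at $r=0$ is consistent because $h'(0)=0$. Hence $r^{\theta-1}\phi_p(h')$ agrees with the $C^1[0,a)$ function $F$ on all of $[0,a)$, which gives (2), and the value of its derivative at $0$ is $F'(0)=0$, which gives (3).

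There is no real obstacle here: the whole argument hinges on the fact that $\theta>1$ makes both the boundary term $\varepsilon^{\theta-1}\phi_p(h'(\varepsilon))$ and the factor $r^{\theta-1}$ at the origin vanish. The only small technical point is justifying the integration step, which requires only that $(r^{\theta-1}\phi_p(h'))'$ is the pointwise classical derivative on $(0,a)$ (given by hypothesis) and that this derivative is continuous, so the fundamental theorem of calculus applies on each $[\varepsilon,r]\subset(0,a)$.
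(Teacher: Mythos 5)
Your proof is correct and follows essentially the same strategy as the paper: integrate the ODE, use boundedness of $h'$ together with $\theta>1$ to kill the boundary term at the origin, read off $h'(0)=0$ from the resulting growth estimate $\phi_p(h'(r))=O(r)$, and then obtain the derivative at $0$ from the integral representation. Your write-up is slightly more careful on two points the paper leaves tacit (passing $\varepsilon\to 0^+$ explicitly, and observing that $r^{\theta-1}\phi_p(h')$ coincides with the manifestly $C^1$ function $F$, which also yields continuity of the derivative at $0$), but the underlying argument is the same.
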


\begin{remark}\label{1: Osservazione: Il caso theta=1}
The conclusions 1. and 3. of lemma \ref{1: Lemma: Regolarità in 0} are no longer true if \(\theta = 1\). For example, if \(p=2\), the function \(h(r) = \sin(r)\) solves the equation \(h''(r) + h(r) = 0\) in a neighborhood of the origin (\(\la=1\)), but \(h'(0) = 1\).
In any case, conclusion 2. remains true. If \(h \in C^1[0,a]\) and \(|h'|^{p-2}h'\in C^1(0,a)\) 
then, by integrating the equation from 0 to \(r\) and dividing by \(r\), we obtain
\[
(|h'|^{p-2}h')'(0) := \lim_{r \to 0^+} \frac{|h'(r)|^{p-2}h'(r) - |h'(0)|^{p-2}h'(0)}{r} = \lim_{r \to 0^+} \frac{\la}{r} \int_0^r |h|^{p-2}h(s)\,ds =\la |h(0)|^{p-2}h(0).
\]
\end{remark}

\begin{proof}
Integrating equation (\ref{1: EQ: Lemma: Regolarità in 0}) from 0 to \(r\), we have:
\[
r^{\theta-1}\phi_p(h')(r) + \lambda \int_0^r s^{\theta-1}\phi_p(h)(s)\,ds = 0.
\]
Applying the change of variable \(t=s/r\) to the integral, we arrive at the expression:
\[
\frac{|h'|^{p-2}h'(r)}{r} = -\lambda \int_0^1 t^{\theta-1}\phi_p(h)(rt)\,dt.
\]
Since \(h \in C^1 [0,a]\), the right-hand side remains bounded, therefore,
\(\dyle
\frac{|h'|^{p-2}h'(r)}{r} \quad \text{remains bounded as } r \to 0^+.
\)
From this, we can conclude \(h'(0) = 0\).
For points 2. and 3.,  it is sufficient to observe that
\[
(r^{\theta-1}\phi_p(h'))'(0) := \lim_{r \to 0^+} \frac{r^{\theta-1}\phi_p(h')(r)}{r} = \lim_{r \to 0^+} -\frac{\lambda}{r} \int_0^r s^{\theta-1}\phi_p(h)(s)\,ds = 0
.
\]
\end{proof}

Let's now proceed to study the nonlinear differential equation \((r^{\theta-1}\phi_p(h'))' + r^{\theta-1}\phi_p(h) = 0 \quad \text{for } r \geq 0.\)
\begin{theorem}[Existence and uniqueness]\label{1: THM: Esistenza e Unicità}
For fixed \(r_0 \geq 0\) and \(\theta \geq 1\), there exists a unique solution \(\varphi(r) \in C^1[r_0, +\infty)\), with \(r^{\theta-1}\phi_p(\varphi') \in C^1[r_0, +\infty)\), to the problem:
\begin{equation}\label{1: EQ: THM: Esistenza e Unicità}
\quad
\begin{cases} 
(r^{\theta-1}\phi_p(h'))' + r^{\theta-1}\phi_p(h) = 0  & \text{for } r \geq r_0\\ 
h(r_0) = h_0, \quad h'(r_0) = h'_0 
\end{cases}
\end{equation}
where \(h'_0 = 0\) if \(r_0 = 0\).
\end{theorem}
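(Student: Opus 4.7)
The plan is to rewrite the degenerate second-order ODE as a Volterra integral equation, prove local existence and uniqueness by a Banach fixed-point argument, and then extend the solution globally by means of an energy estimate.

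First, I would introduce the ``generalized momentum'' $v(r) := r^{\theta-1}\phi_p(h'(r))$, turning the ODE into the first-order system
\[
h'(r) = \phi_{p'}\!\bigl(r^{1-\theta} v(r)\bigr), \qquad v'(r) = -r^{\theta-1}\phi_p(h(r)),
\]
where $\phi_{p'}$ (with $\tfrac{1}{p}+\tfrac{1}{p'}=1$) is the inverse of $\phi_p$. Setting $v_0 := r_0^{\theta-1}\phi_p(h_0')$ and integrating, this system is equivalent to the fixed-point equation
\[
h(r) = h_0 + \int_{r_0}^{r} \phi_{p'}\!\Bigl( t^{1-\theta}\bigl(v_0 - {\textstyle\int_{r_0}^{t}} s^{\theta-1}\phi_p(h(s))\,ds\bigr)\Bigr)\,dt.
\]
When $r_0=0$ the hypothesis $h_0'=0$ forces $v_0=0$, and the singular factor $t^{1-\theta}$ is tamed because for $h$ bounded near $0$ the inner integral is $O(t^\theta)$; hence the integrand of the outer integral extends continuously up to $t=0$.

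For local existence and uniqueness on $[r_0, r_0+\delta]$ I would apply Banach's fixed-point theorem to the operator $T$ defined by the right-hand side above, acting on a small closed ball around the constant $h_0$ in $C([r_0,r_0+\delta])$. Since $p\ge 2$, $\phi_p$ is $C^1$ and locally Lipschitz, so the inner integral depends Lipschitz-continuously on $h$; the delicate point is that for $p>2$ the outer map $\phi_{p'}$ is only H\"older of exponent $1/(p-1)$ at zero. This is the principal obstacle, and it is particularly visible in the singular case $r_0=0$: the failure of Lipschitz regularity of $\phi_{p'}$ at the origin must be handled either via an Osgood-type criterion (the modulus of continuity of $\phi_{p'}$ satisfies Osgood's integrability condition), or via the monotonicity inequality $(\phi_p(a)-\phi_p(b))(a-b)\ge 0$ applied to the difference of two putative solutions together with a Gronwall-type argument on the integral formulation. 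For $r_0>0$, the weight $t^{1-\theta}$ is bounded and a direct contraction estimate on a sufficiently small interval suffices.

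Finally, to globalize the local solution I would derive the following energy identity: multiplying the equation by $h'$, using $(\phi_p(h'))' = -\phi_p(h) - \tfrac{\theta-1}{r}\phi_p(h')$, and rearranging, one obtains
\[
\frac{d}{dr}\!\left[\frac{p-1}{p}|h'(r)|^p + \frac{1}{p}|h(r)|^p\right] = -\frac{\theta-1}{r}|h'(r)|^p \le 0 \qquad (r>0,\;\theta\ge 1).
\]
Hence the energy $E(r) := \tfrac{p-1}{p}|h'(r)|^p + \tfrac{1}{p}|h(r)|^p$ is non-increasing, providing a uniform a priori $L^\infty$ bound on both $h$ and $h'$. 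This precludes finite-time blow-up, and the standard continuation argument extends the unique local solution to a unique $h \in C^1[r_0,+\infty)$ with $r^{\theta-1}\phi_p(h') \in C^1[r_0,+\infty)$; in the case $r_0=0$ the regularity at the origin follows from Lemma \ref{1: Lemma: Regolarità in 0}.
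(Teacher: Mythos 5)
Your overall blueprint---integral reformulation, fixed point, globalization---is aligned with the paper, and the energy identity you derive for globalization,
\[
\frac{d}{dr}\!\left[\frac{p-1}{p}|h'|^p + \frac{1}{p}|h|^p\right] = -\frac{\theta-1}{r}|h'|^p \le 0,
\]
is correct and provides a uniform a priori bound; this is genuinely a more elegant route to global extension than the paper's (the paper instead observes that the local step length $\delta_1$ can be chosen independently of the data, namely $\delta_1^{p'}\le 1/2$, and iterates). However, the local existence-and-uniqueness part contains a real gap.

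\emph{Existence.} The paper does \emph{not} use Banach's fixed-point theorem; it uses Schauder. This is not incidental: for $p>2$, $\phi_{p'}$ fails to be Lipschitz precisely at $0$, and the operator $T$ is therefore not a contraction on a ball around the constant $h_0$ when the argument of $\phi_{p'}$ starts at zero---which happens whenever $h'_0 = 0$, \emph{including for $r_0 > 0$}. Your assertion that ``for $r_0>0$\ldots\ a direct contraction estimate suffices'' misidentifies the source of the obstruction: it is not the weight singularity $t^{1-\theta}$, but the vanishing argument of $\phi_{p'}$. Schauder (equiboundedness plus equicontinuity) sidesteps this entirely.

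\emph{Uniqueness.} Of your two proposed fixes, the Osgood claim is factually wrong. Osgood requires $\int_{0^+} ds/\omega(s) = \infty$, whereas the modulus $\omega(s)=s^{1/(p-1)}$ of $\phi_{p'}$ for $p>2$ gives a \emph{convergent} integral (this is exactly the regime of the standard $y'=|y|^\alpha$, $0<\alpha<1$ non-uniqueness example). The monotonicity-plus-Gronwall alternative is too vague to assess: the inequality $(\phi_p(a)-\phi_p(b))(a-b)\ge0$ gives clean uniqueness for boundary-value problems where the bilinear pairing vanishes at both endpoints, but for an initial-value problem the sign of the resulting boundary term is indeterminate. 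What the paper actually does is a case analysis on $(h_0,h'_0)$ at the initial point $\bar r$: when $h'_0\ne 0$ the argument of $\phi_{p'}$ stays away from $0$ and local Lipschitz control applies; when $h'_0=0$ but $h_0\ne 0$ one factors out $(t-\bar r)^{1/(p-1)}$ and observes the remaining argument of $\phi_{p'}$ converges to $\phi_p(h_0)/\theta\ne0$ (or $\phi_p(h_0)$ when $\bar r>0$), again recovering Lipschitz control; and when $h_0=h'_0=0$ a direct sup-estimate shows the solution is identically zero. Your proposal needs this case analysis, or a genuine substitute, to close the argument; as written, both suggested uniqueness mechanisms fall short.
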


\begin{remark}
As stated in lemma \ref{1: Lemma: Regolarità in 0}, when \(\theta > 1\), the condition \(h'_0 = 0\) for \(r_0 = 0\) is necessary to ensure the existence of a solution. However, for \(\theta = 1\), we impose this condition.
\end{remark}

\begin{proof}
We split the proof into four steps.\\\\
\textit{Step 1) Local Existence.}\\
We prove that there exists a local solution in a right interval of the form \([r_0, r_0 + \delta_1]\), with \(\delta_1\) to be chosen later, depending only on \(h_0\) and \(h'_0\), and independent of \(r_0\).\\

Integrating equation (\ref{1: EQ: THM: Esistenza e Unicità}) from \(r_0\) to \(r\), we obtain:
\begin{equation}\label{1: EQ: DIM: THM: Locale Esistenza: Prima relazione integrale}
r^{\theta-1}\phi_p(h')(r)-r_0^{\theta-1}\phi_p(h'_0)+\int_{r_0}^r s^{\theta-1}\phi_p(h)(s)\,ds=0,\quad\text{in other words,}
\end{equation}
\[
\phi_p(h')(r) = \left(\frac{r_0}{r}\right)^{\theta-1} \phi_p(h'_0) - \int_{r_0}^{r} \left(\frac{s}{r}\right)^{\theta-1} \phi_p(h)(s)\,ds.
\]
From this, we can express \(h'(r)\) as
\[
h'(r) = \phi_{p'}\left( \left(\frac{r_0}{r}\right)^{\theta-1} \phi_p(h'_0) - \int_{r_0}^{r} \left(\frac{s}{r}\right)^{\theta-1} \phi_p(h)(s)\,ds \right),
\]
and, with another integration from \(r_0\) to \(r\), we have:
\[ 
h(r) = h_0 + \int_{r_0}^{r} \phi_{p'}\left( \left(\frac{r_0}{t}\right)^{\theta-1} \phi_p(h'_0) - \int_{r_0}^{t} \left(\frac{s}{t}\right)^{\theta-1} \phi_p(h)(s)\,ds \right)dt.
\]

We define the operator \(T\) as follows:
\[ 
T(h)(r) := h_0 + \int_{r_0}^{r} \phi_{p'}\left( \left(\frac{r_0}{t}\right)^{\theta-1} \phi_p(h'_0) - \int_{r_0}^{t} \left(\frac{s}{t}\right)^{\theta-1} \phi_p(h)(s)\,ds \right)dt.
\]
We can summarize by saying that \(h\) is a solution to the differential equation if and only if \(h\) is a fixed point of the operator \(T\).
Our strategy is the following: consider \(X = C[r_0,r_0+\delta_1]\) as a Banach space, and note that \(T: X \to X\). We define \(S\) the closed, bounded, convex, and non-empty subset of \(X\) as \(S = \overline{B_R(0)}\subset C[r_0, r_0 + \delta_1]\). Our goal is to select \(\delta_1\) and \(R>0\) such that \(T: S \to S\) becomes a compact operator. By applying the Schauder fixed-point theorem, we can then conclude that \(T\) has at least one fixed point \(h \in S\).\\

Given \(h \in C[r_0,r_0+\delta_1]\), we have:
\[
\begin{split}
|T(h)(r)|
&
\leq |h_0| + \int_{r_0}^{r} \phi_{p'}\left( \left| \left(\frac{r_0}{t}\right)^{\theta-1} \phi_p(h'_0) - \int_{r_0}^{t} \left(\frac{s}{t}\right)^{\theta-1} \phi_p(h)(s)\,ds \right| \right)\,dt  \\
& 
\leq |h_0| + \int_{r_0}^{r} \phi_{p'}\left( |h'_0|^{p-1} + \int_{r_0}^{t} \left(\frac{s}{t}\right)^{\theta-1} |h|^{p-1}(s)\,ds \right)\,dt  \\
& 
\leq |h_0| + \int_{r_0}^{r} \phi_{p'}\left( |h'_0|^{p-1}+\left( \sup_{[r_0,r_0+\delta_1]} |h|\right)^{p-1} \frac{t^\theta-r_0^\theta}{\theta t^{\theta-1}}\right)\,dt \\
& 
= |h_0| + \int_{r_0}^{r} \left(|h'_0|^{p-1}+\left( \sup_{[r_0,r_0+\delta_1]} |h|\right)^{p-1} \frac{t^\theta-r_0^\theta}{\theta t^{\theta-1}}\right)^{\frac{1}{p-1}}dt
\end{split}
\]
For \(\theta \geq 1\), it is easy to verify that \(\dyle \frac{t^\theta-r_0^\theta}{\theta t^{\theta-1}} \leq t-r_0\) using Lagrange theorem. Moreover, for \(\alpha \in (0,1]\), and thus in particular for \(\alpha = \frac{1}{p-1}\), the triangle inequality \(|x+y|^\alpha \leq |x|^\alpha+|y|^\alpha\) holds. Using these two observations, we can write:
\[
\begin{split}
|T(h)(r)| & \leq |h_0|+\int_{r_0}^{r} \left(|h'_0|+\delta_1^{\frac{1}{p-1}} \sup_{[r_0, r_0+\delta_1]}|h|\right) 
\leq |h_0|+|h'_0|+\delta_1^{p'} \sup_{[r_0, r_0+\delta_1]}|h|.
\end{split}
\]
Denoting with \(\|h\|\) the norm in \(C[r_0, r_0+\delta_1]\), we have found
\[
\|T(h)\| \leq C + \delta_1^{p'} \|h\| \quad \text{where } C = |h_0| + |h'_0|.
\]
Now let's consider \(R =2 C\) and \(S = \overline{B_R(0)} \subset C[r_0, r_0+\delta_1]\). It is easy to verify that
\begin{equation}\label{1: EQ: DIM: THM: Locale Esistenza: scelta delta}
\text{if }\,\delta_1^{p'} \leq \frac{R-C}{R}=\frac{1}{2}\,\text{ then }\,T: S \to S.
\end{equation}
From Ascoli-Arzelà theorem, we can conclude that \(T|_S\) is a compact operator. 
Indeed, let's suppose we have a bounded sequence \(h_n \in C[r_0, r_0+\delta_1]\) with \(\|h_n\| \leq M\). Then:

- \textit{Equiboundedness:} \(\|T(h_n)\| \leq C + \delta_1^{p'} M \leq M'\).
  
- \textit{Equicontinuity:} With calculations similar to those done earlier, we can easily verify that the sequence \(T(h_n)\) is equicontinuous.


From the Schauder fixed-point theorem, we can conclude that there exists a function \(h \in C[r_0, r_0+\delta_1]\) such that \(h(r) = T(h)(r)\). In particular, from this identity, we also have \(h \in C^1[r_0, r_0+\delta_1]\), and going back to equation (\ref{1: EQ: DIM: THM: Locale Esistenza: Prima relazione integrale}) 
we can conclude that \(r^{\theta-1}\phi_p(h') \in C^1[r_0, r_0+\delta_1]\). This is the local solution we were looking for.\\

\textit{Step 2) Extension of the solution, global existence.}\\
Let \(h\) be the solution in \([r_0, r_0+\delta_1]\) provided in the previous step, and consider the new problem 
\[
\begin{cases} (r^{\theta-1}\phi_p(u'))'+r^{\theta-1}\phi_p(u)=0 \text{ in } [r_1, r_1+\delta_2]\\ u(r_1)=h_1, \quad u'(r_1)=h'_1
\end{cases}
\]
where \(r_1=r_0+\delta_1\), \(h_1=h(r_0+\delta_1)\), and \(h'_1=h'(r_0+\delta_1)\).\\

Using the same arguments as in Step 1, we find a local solution, which we denote as \(\overline{h}\), in the interval \([r_1, r_1 + \delta_2]\), where \(\delta_2\) satisfies 
\[ 
\delta_2^{p'} \leq \frac{\tilde{R} - |h_1| - |h'_1|}{\tilde{R}} =\frac{1}{2}, \quad\text{ if we choose } \tilde{R}=2(|h_1| + |h'_1|).
\]
Therefore, we can assume \(\delta_2 = \delta_1\). Consequently, 
the function
\[
\tilde{h}(r) =
\begin{cases} h(r) & \text{for } r \in [r_0, r_0+\delta_1], \\
\overline{h}(r) & \text{for } r \in [r_0+\delta_1, r_0+2\delta_1],
\end{cases}
\]
is a \(C^1\) function, with \(r^{\theta-1}\phi_p(\tilde{h}') \in C^1\), solving our problem over the entire interval \([r_0, r_0+2\delta_1]\).\\

Iterating this process and advancing by a step of \(\delta_1\) each time, we can find a solution \(\varphi(r) \in C^1[r_0,+\infty)\) with \(r^{\theta-1}\phi_p(\varphi') \in C^1[r_0,+\infty)\).\\

\textit{Step 3) Local uniqueness.}\\
Suppose that \( h_1 \) and \( h_2 \) are two local solutions with the same initial data at a point \( \overline{r} \geq 0 \), meaning \( h_1 \) and \( h_2 \) solve
\[ 
\begin{cases} 
(r^{\theta-1}\phi_p(h'_i))' + r^{\theta-1}\phi_p(h_i) = 0 & \text{in } [\overline{r}, \overline{r} + \delta] \,\,\,\,\text{ for } i = 1, 2 \\
 h_i(\overline{r}) = h_0, \quad h'_i(\overline{r}) = h'_0 
& \text{\( h'_0 = 0 \) if \( \overline{r} = 0 \),}
\end{cases} 
\]
we want to prove that \( h_1 \equiv h_2 \) in \([\overline{r}, \overline{r} + \delta]\).\\

Let's assume, for the sake of contradiction, that \( h_1 \neq h_2 \) and proceed to analyze several different situations.\\

\textit{Case A) \( \overline{r} > 0 \).}\\
The following relation holds
\[
h_i(r) = h_0 + \int_{\overline{r}}^r \phi_{p'}\left(\left(\frac{\overline{r}}{t}\right)^{\theta-1}\phi_p(h'_0) - \int_{\overline{r}}^t \left(\frac{s}{t}\right)^{\theta-1} \phi_p(h_i)(s)\, ds\right) dt.
\]
Let \(\dyle g_i(t) = \left(\frac{\overline{r}}{t}\right)^{\theta-1}\phi_p(h'_0) - \int_{\overline{r}}^t \left(\frac{s}{t}\right)^{\theta-1} \phi_p(h_i)(s)\, ds \). It is easy to verify that \(\dyle \lim_{{t \to \overline{r}}^+} g_i(t) = \phi_p(h'_0) \), indeed:
\[
\left| \int_{\overline{r}}^t \left(\frac{s}{t}\right)^{\theta-1} \phi_p(h_i(s))\,ds \right| \leq \int_{\overline{r}}^t \left(\frac{s}{t}\right)^{\theta-1}|h_i|^{p-1}\,ds \leq \left(\sup_{[\overline{r}, \overline{r}+\delta]} |h_i|\right)^{p-1} \frac{t^{\theta}-\overline{r}^{\theta}}{\theta t^{\theta-1}} \longrightarrow 0 \quad\text{as }\, t\to\overline{r}.  
\]

\textit{Subcase A.1) \( h'_0 \neq 0. \)}\\
In this case, where \( \phi_p(h'_0) \neq 0 \), the continuity of \( g_i \) alongside the assumption of a sufficiently small \( \delta \) guarantee that \( g_i(r) \) maintains non-zero values within the interval \([\overline{r},\overline{r}+\delta]\). Consequently, we can utilize the \( C^1 \) smoothness of \( \phi_{p'} \) and its local Lipschitz property,
\begin{equation}\label{1: EQ: DIM: THM: Locale Unicità: Lip di phi_p'}
|h_1 - h_2| \leq C_1 \int_{\overline{r}}^r |g_1(t) - g_2(t)|\,dt 
\end{equation}
On the other hand, since the function \( \phi_p \) is of class \( C^1 \) and particularly Lipschitz in a neighborhood of \( h_0 \), we have:

\[
|g_1(t) - g_2(t)| \leq \int_{\overline{r}}^t \left(\frac{s}{t}\right)^{\theta-1} |\phi_p(h_1) - \phi_p(h_2)|\,ds \leq C \int_{\overline{r}}^t \left(\frac{s}{t}\right)^{\theta-1} |h_1(s) - h_2(s)|\,ds 
\]
\[ 
\leq C \left(\sup_{[\overline{r}, \overline{r}+\delta]} |h_1 - h_2|\right) \frac{t^{\theta}-\overline{r}^{\theta}}{\theta t^{\theta-1}} \leq C \delta \sup_{[\overline{r}, \overline{r}+\delta]} |h_1 - h_2| 
\]

Substituting this into the previous inequality, we get:
\[
|h_1 - h_2| \leq C_2 \delta \sup_{[\overline{r}, \overline{r}+\delta]} |h_1 - h_2| 
\]
where \( C_2 = C_1 C \).
Since we can assume \( \delta \) to be sufficiently small, for example \( \delta < \frac{1}{C_2} \), we have arrived at a contradiction. Indeed, we would have
\[\forall r \in [\overline{r}, \overline{r}+\delta],\quad |h_1 - h_2| < \sup_{[\overline{r}, \overline{r}+\delta]} |h_1 - h_2| .
\]

\textit{Subcase A.2) \( h'_0 = 0 = h_0 \).}\\
In this case, it is easy to show that the only solution is the trivial one. We have
\[
h(r) = -\int_{\overline{r}}^r \phi_{p'} \left( \int_{\overline{r}}^t \left(\frac{s}{t}\right)^{\theta-1}\phi_p(h)(s) \, ds \right) dt.
\]
If we assume, for contradiction, that \( h \neq 0 \) in \([\overline{r}, \overline{r}+\delta]\), then
\[ 
\begin{split}
|h(r)| &
\leq \int_{\overline{r}}^r \left(\int_{\overline{r}}^t \left(\frac{s}{t}\right)^{\theta-1} |h|^{p-1}\,ds\right)^{p'-1} dt 
\leq \left(\sup_{[\overline{r}, \overline{r}+\delta]} |h|\right)^{(p-1)(p'-1)} \int_{\overline{r}}^r \left(\frac{t^{\theta}-\overline{r}^{\theta}}{\theta t^{\theta-1}}\right) dt \\
& 
\leq \left(\sup_{[\overline{r}, \overline{r}+\delta]} |h|\right)^{(p-1)(p'-1)} \delta^{p'-1} \int_{\overline{r}}^r 1\,dt 
\leq \delta^{p'} \left(\sup_{[\overline{r}, \overline{r}+\delta]} |h|\right).
\end{split}
\]
Just as before, if we consider \( \delta \) to be sufficiently small, \(\delta<1\), this leads to the contradiction
\[ 
|h| < \sup_{[\overline{r}, \overline{r}+\delta]} |h| \quad  \forall r \in [\overline{r}, \overline{r}+\delta]. 
\]

\textit{Subcase A.3) \( h'_0 = 0 \) and \( h_0 \neq 0 \).}\\
In this case, the functions \( g_i(t) \), \( i = 1, 2 \), tend to 0 as \( t \to \overline{r} \). Therefore, we cannot use the local Lipschitz property of \( \phi_{p'} \) as we did in subcase A.1. Instead, a simple calculation shows that
\[
\lim_{t \to \overline{r}} \frac{1}{t-\overline{r}} \int_{\overline{r}}^t \left(\frac{s}{t}\right)^{\theta-1} \phi_p(h_i)(s) \, ds = \phi_p(h_0) \neq 0 .
\]

We can use the local Lipschitz property of \( \phi_{p'} \) in a neighborhood of \( \phi_p(h_0) \).

If \( h_1 \) and \( h_2 \) are two distinct solutions, we can write:
\[ 
|h_1 - h_2| \leq \int_{\overline{r}}^r \left| \phi_{p'} \left( \frac{t-\overline{r}}{t-\overline{r}}\int_{\overline{r}}^t \left(\frac{s}{t}\right)^{\theta-1} \phi_p(h_1) \right) - \phi_{p'} \left( \frac{t-\overline{r}}{t-\overline{r}} \int_{\overline{r}}^t \left(\frac{s}{t}\right)^{\theta-1} \phi_p(h_2) \right) \right| dt 
\]
\[ 
=\int_{\overline{r}}^r |t-\overline{r}|^{\frac{1}{p-1}} \left| \phi_{p'} \left( \frac{1}{t-\overline{r}}\int_{\overline{r}}^t \left(\frac{s}{t}\right)^{\theta-1} \phi_p(h_1) \right) - \phi_{p'} \left( \frac{1}{t-\overline{r}}\int_{\overline{r}}^t \left(\frac{s}{t}\right)^{\theta-1} \phi_p(h_2) \right) \right| dt 
\]
\[ 
\leq C_1 \int_{\overline{r}}^r |t-\overline{r}|^{\frac{1}{p-1}} \frac{1}{t-\overline{r}}\int_{\overline{r}}^t \left(\frac{s}{t}\right)^{\theta-1} |\phi_p(h_1)-\phi_p(h_2)| dsdt 
\]
\[ 
\leq C_1 C \int_{\overline{r}}^r (t-\overline{r})^{\frac{1}{p-1}-1}\int_{\overline{r}}^t \left(\frac{s}{t}\right)^{\theta-1} |h_1-h_2|dsdt 
\]
\[ 
\leq C_1 C \sup_{[\overline{r},\overline{r}+\delta]}|h_1-h_2| \int_{\overline{r}}^r (t-\overline{r})^{\frac{1}{p-1}} dt 
\]
\[ 
\leq C_1 C \delta^{p'} \sup_{[\overline{r},\overline{r}+\delta]}|h_1-h_2| 
\]
This inequality leads to a contradiction because if \( h_1 \neq h_2 \), unless we choose \( \delta \) sufficiently small such that \( C_1 C \delta^{p'} < 1 \), we get:
\[ 
|h_1-h_2| < \sup_{[\overline{r},\overline{r}+\delta]} |h_1-h_2|. 
\]

\textit{Case B) \( \overline{r} = 0 \), \( h'_0 = 0 \).}\\

\textit{Subcase B.1) \( h_0 = 0 \).}\\
This case is treated similarly to case A.2.\\

\textit{Subcase B.2) \( h_0 \neq 0 \).}\\
Let's implement an argument similar to case A.3.\\
Let \( h_1 \) and \( h_2 \) be as usual, two distinct solutions. We observe that for \( i = 1, 2 \)
\[ 
\left| \frac{1}{t} \int_0^t \left(\frac{s}{t}\right)^{\theta-1} \phi_p(h_i)ds - \frac{\phi_p(h_0)}{\theta } \right| \leq \frac{1}{t^{\theta}} \left| \int_0^t s^{\theta-1}|\phi_p(h_i)-\phi_p(h_0)|ds \right| \leq C \sup_{[0,t]} |h_i-h_0|. 
\]

Therefore, we have:
\[ 
\lim_{t \to 0} \frac{1}{t} \int_0^t \left(\frac{s}{t}\right)^{\theta-1}\phi_p(h_i)ds = \frac{\phi_p(h_0)}{\theta} \neq 0. 
\]
We can use the local Lipschitz property of \( \phi_{p'} \) in a neighborhood of \( \frac{\phi_p(h_0)}{\theta} \). We have
\[ 
|h_1 - h_2| \leq \int_0^r \left| \phi_{p'} \left( \frac{t}{t} \int_0^t \left(\frac{s}{t}\right)^{\theta-1} \phi_p(h_1) \right) - \phi_{p'} \left( \frac{t}{t} \int_0^t \left(\frac{s}{t}\right)^{\theta-1} \phi_p(h_2) \right) \,ds\right| dt 
\]
\[ 
\leq C_1 \int_0^r t^{p'-1} \left| \frac{1}{t} \int_0^t \left(\frac{s}{t}\right)^{\theta-1} (\phi_p(h_1)-\phi_p(h_2))ds \right| dt 
\]
\[ 
\leq C_1 C \int_0^r t^{p'-2} \int_0^t \left(\frac{s}{t}\right)^{\theta-1} |h_1-h_2| dsdt 
\]
\[ 
\leq C_1 C \sup_{[0,\delta]} |h_1-h_2| \int_0^r t^{p'-1} dt \]
\[ 
= C_2 \delta^{p'} \sup_{[0,\delta]} |h_1-h_2| 
\]

As usual, assuming \( \delta \) to be sufficiently small such that \( C_2 \delta^{p'} < 1 \), we arrive at a contradiction.\\\\

\textit{Step 4) Global Uniqueness.}\\
Let's assume that \( h_1 \) and \( h_2 \) are two solutions of the problem
\[ 
\begin{cases} (r^{\theta-1}\phi_p(h'))' + r^{\theta-1}\phi_p(h) = 0, & r \geq 0 \\
h(r_0) = h_0, \,\,\, h'(r_0) = h'_0 & \text{(\( h'_0 = 0 \) if \( r_0 = 0 \)).}
\end{cases} 
\]

We claim that \( h_1 \equiv h_2 \) in \([r_0, +\infty)\). Suppose, for the sake of contradiction, that this is not the case and let
\[ 
\tilde{r} := \inf\{r \geq r_0 : h_1(r) \neq h_2(r)\} 
\]

Then we have \( h_1(\tilde{r}) = h_2(\tilde{r}) \) and \( h'_1(\tilde{r}) = h'_2(\tilde{r}) \).

If \( \tilde{r} = r_0 \), then by our assumption, \( h_1(r_0) = h_2(r_0) \) and \( h'_1(r_0) = h'_2(r_0) \).

Now, consider the case when \( \tilde{r} > r_0 \). Due to the continuity of \( h_1 \) and \( h_2 \), if \( h_1(\tilde{r}) \neq h_2(\tilde{r}) \), it would imply that \( h_1(r) \neq h_2(r) \) in a neighborhood of \( \tilde{r} \). However, this contradicts the definition of \( \tilde{r} \).

Furthermore, for \( i = 1,2 \), we have:
\[ 
h'_i(\tilde{r}) = \phi_{p'}\left( \left(\frac{r_0}{\tilde{r}}\right)^{\theta-1}\phi_p(h_0) - \int_{r_0}^{\tilde{r}} \left(\frac{s}{\tilde{r}}\right)^{\theta-1}\phi_p(h_i)ds \right) 
\]

Since \( h_1 \equiv h_2 \) in \([r_0,\tilde{r}]\), we also conclude that \( h'_1(\tilde{r}) = h'_2(\tilde{r}) \).\\

By the local uniqueness from step 3, it follows that \( h_1 \equiv h_2 \) in \([\tilde{r},\tilde{r}+\delta]\), which contradicts the definition of \( \tilde{r} \). Therefore, the set over which we calculate the infimum is empty, showing that \( h_1 \equiv h_2 \) for \( r \geq r_0 \).

\end{proof}

\begin{proof}[Proof of Proposition \ref{1: PROP: Esistenza Unicità e Oscillazioni}]
The existence of the function \( \varphi \) follows from Theorem \ref{1: THM: Esistenza e Unicità}. If there exist a point \( z_0 \) such that \( \varphi(z_0) = 0 = \varphi'(z_0) \), then by uniqueness, \( \varphi \equiv 0 \) in \( [z_0, +\infty) \). Using a similar argument as in subcase A.2 and step 4, we would then have \( \varphi \equiv 0 \) for every \( r > 0 \), which is absurd. Therefore, every zero of the function \( \varphi \) is simple. 
We are left to prove that \( \varphi \) is oscillatory.\\

Assume for contradiction that \( \varphi \) is not oscillatory: \(\exists\, r_0 > 0 \) such that \( \varphi(r) \neq 0 \)  \(\forall r \geq r_0 \). 
For \( r \geq r_0 \), the function \( \dyle h(r) = r^{\theta-1}\phi_p\left(\frac{\varphi'(r)}{\varphi(r)}\right) \) is well-defined.
Using the fact that
\[ 
\frac{(r^{\theta-1}\phi_p(\varphi'))'}{\phi_p(\varphi)} = h' + \frac{(p-1)r^{\theta-1}\phi_p(\varphi')|\varphi|^{p-2}\varphi'}{\phi_p(\varphi)^2} = h' + \frac{(p-1)}{r^{(\theta-1)(p'-1)}} |h|^{p'} 
\]
and using the equation satisfied by \( \varphi \), we find that \( h \) is a solution of
\[
h' + \frac{(p-1)}{r^{(\theta-1)(p'-1)}}|h|^{p'} + r^{\theta-1} = 0 \quad \text{in} \quad [r_0,+\infty). 
\]
Let's prove that this leads to a contradiction.
Integrating from \( r_0 \) to \( t > r_0 \), we find,

\[ 
h(t) - h(r_0) + (p-1)\int_{r_0}^{t} \frac{|h|^{p'}}{s^{(\theta-1)(p'-1)}}\,ds + \frac{t^{\theta}}{\theta} - \frac{r_0^{\theta}}{\theta} = 0, \]

This yields the following system of equations:
\[
\begin{cases}
w(t) = \int_{r_0}^{t} \frac{|h|^{p'}}{s^{(\theta-1)(p'-1)}}\,ds \\
h(t) + (p-1)w(t) = -\frac{t^{\theta}}{\theta} + \frac{r_0^{\theta}}{\theta} + h(r_0).
\end{cases}
\]

\begin{enumerate}
\item \( w \geq 0 \). In particular, for \( t \) sufficiently large, \( h(t) \leq- ct^{\theta} \), so \( |h(t)| = -h(t) \geq ct^{\theta} \).
\item \(\dyle w(t) = \int_{r_0}^{t} \frac{|h|^{p'}}{s^{(\theta-1)(p'-1)}}\,ds \geq c_1 \int_{r_0}^{t} s^{\theta + p' - 1}\,ds = c_2 t^{\theta + p'} \) for \( t \) sufficiently large.
\item For \( t \) sufficiently large, \( h(t) + (p-1)w(t) \leq 0 \implies (p-1)w(t) \leq |h(t)| \) for \( t \gg r_0 \).
\item \(\dyle w'(t) = \frac{|h|^{p'}}{t^{(\theta-1)(p'-1)}} \).
\item \(\dyle (p-1)^{p'}w^{p'} \leq |h|^{p'} \frac{t^{(\theta-1)(p'-1)}}{t^{(\theta-1)(p'-1)}} = t^{(\theta-1)(p'-1)}w'(t) \) for \( t \gg r_0 \).
\end{enumerate}
%
%
%
%

Integrating from \( t \) to \( s > t \) this last inequality, we distinguish two cases:\\

\textit{Case A) \( (\theta-1)(p'-1) \neq 1 \), i.e., \( \theta \neq p \).}\\
In this case, we obtain 
\[ 
\int_t^s \frac{w'(\tau)}{w^{p'}(\tau)} \,d\tau \geq (p-1)^{p'}\int_t^s \tau^{\theta+p'-\theta p'-1} d\tau= \frac{(p-1)^{p'}}{(\theta+p'-\theta p')} 
(s^{\theta+p'-\theta p'}-t^{ \theta+p'- \theta p'})
\]
Therefore, 
\[ 
\frac{1}{w(t)^{p'-1}} - \frac{1}{w(s)^{p'-1}} \geq \frac{(p-1)^{p'+1}}{(\theta+p'- \theta p')} (s^{\theta+p'-\theta p'}-t^{\theta+p'-\theta p'}).
\]

\vspace{0.5cm}
\textit{Sub-case A.1) \( \theta + p' - \theta p' > 0 \) i.e. \( \theta < p \).}\\
We obtain a contradiction because from point 2., for \( t \gg r_0 \):
\[ 
\frac{c}{t^{(p'-1)(\theta+p')}} \geq \frac{1}{{w(t)}^{p'-1}} \geq \tilde{c} \left(s^{\theta+p'-\theta p'} - t^{\theta+p'-\theta p'}\right) + \frac{1}{{w(s)}^{p'-1}} 
\]
The left-hand side does not depend on \( s \) (which is arbitrary as long as \( s > t \)), while the right-hand side does. Letting \( s \to+ \infty \) leads to a contradiction. 
\\

\textit{Sub-case A.2) \( \theta + p' - \theta p' < 0 \) i.e. \( \theta > p \).}\\
In this case, we have:
\[ 
\frac{1}{{w(t)}^{p'-1}} - \frac{1}{{w(s)}^{p'-1}} \geq \frac{(p-1)^{p'+1}}{{\theta p' - \theta - p'}} \left(\frac{1}{{t^{\theta p' - \theta - p'}}} - \frac{1}{{s^{\theta p' - \theta - p'}}}\right). 
\]
Letting \( s \to +\infty \) and recalling that \( \frac{1}{w(s)^{p'-1}} \to 0 \), we obtain, for \( t \gg r_0 \):
\[ 
\frac{c}{{t}^{(p'-1)(\theta+p')}} \geq \frac{1}{{w(t)}^{p'-1}} \geq \tilde{c} \frac{1}{{t}^{\theta p' - \theta - p'}}
\]

This chain of inequalities is incompatible because \((p'-1)(\theta+p') = \theta p' - \theta - p' + (p')^2 > \theta p' - \theta - p'\). Once again, we have reached a contradiction.\\

\textit{Case B) \( (\theta-1)(p'-1) = 1 \) i.e. \( \theta = p \).}\\
Going back to point 5. and integrating from \( t \) to \( s > t \), we obtain:
\[ \frac{1}{{w(t)}^{p'-1}} - \frac{1}{{w(s)}^{p'-1}} \geq (p-1)^{p'+1} (\ln{s} - \ln{t}) \]
As in the previous cases, letting \( s \to +\infty \) leads to a contradiction.\\
\end{proof}

\begin{proof}[Proof of Proposition \ref{1: THM: Teorema Fondamentale}]
It is easy to verify that the pair \( \lambda_k, h_k \) is a solution. Conversely, suppose that for some \( \lambda \in \mathbb{R} \), we have a solution \( h(r) \) of \textbf{(P1)}. By multiplying the equation by \( h \) and integrating from 0 to \( R \), we can readily verify that \( \lambda \) must be positive. Thus, the function
\[ k(r) = h\left(\frac{r}{\lambda^{1/p}}\right) \text{ in } \left[0, \lambda^{1/p} R\right] \]
is well-defined, and it is easy to observe that \( k(r) \) solves:

%
%
%
%
\[
\begin{cases} 
(r^{\theta-1}\phi_p(k'))' + r^{\theta-1}\phi_p(k) = 0 \quad \text{in } [0, \lambda^{1/p} R] \\
k'(0) = h'(0) = 0,\,\,\, 
k(\lambda^{1/p} R) = h(R) = 0 
\end{cases}
\]
\vspace{0.3cm}

We also note that \( k(0) = h(0) \). If \( h(0) = 0 \), then \( k(r) \) would be a local solution of the equation
\[
(r^{\theta-1}\phi_p(k'))' + r^{\theta-1}\phi_p(k) = 0 
\]

with zero initial data, \( k(0) = k'(0) = 0 \). By Theorem \ref{1: THM: Esistenza e Unicità} on existence and uniqueness, it would imply \( k \equiv 0 \), which means \( h \equiv 0 \) in \([0,R]\). 
Unless we exclude this trivial solution, we can assume \( h(0) \neq 0 \).\\
The function \( \dyle \tilde{k}(r) = \frac{1}{h(0)} k(r) \) solves
\[
\begin{cases} 
(r^{\theta-1}\phi_p(\tilde{k}'))' + r^{\theta-1}\phi_p(\tilde{k}) = 0 \quad \text{in } [0, \lambda^{1/p} R] \\
\tilde{k}(0) = 1, \quad \tilde{k}'(0) = 0.
\end{cases}
\]
By uniqueness, we have \( \tilde{k} \equiv \varphi(r) \), which implies \(\dyle \frac{1}{h(0)} h\left(\frac{r}{\lambda^{1/p}}\right) = \varphi(r) \). From this, we deduce \( h(r) = h(0) \varphi\left(\lambda^{1/p} r\right) \) over \([0,R]\). Since \( h(R) = 0 \), it follows that \( \varphi\left(\lambda^{1/p} R\right) = 0 \), implying \( \lambda^{1/p} R = \nu_k(p,\theta) \) for some \( k \geq 1 \). Consequently, we can conclude:
\[
\begin{cases} 
\lambda = \lambda_k = \left(\frac{\nu_k(p,\theta)}{R}\right)^p \\
h = h_k(r) = h(0) \varphi\left(\frac{\nu_k(p,\theta)}{R} r\right).
\end{cases}
\]
\end{proof}

\section{Appendix}\label{Appendice B}
In this appendix, we provide a proof of Proposition \ref{STIME: PROP: STIMA DEL PRIMO TIPO} and Proposition \ref{STIME: PROP: STIMA DEL SECONDO TIPO}.\\\\
In order to approach the proof of Proposition \ref{STIME: PROP: STIMA DEL PRIMO TIPO}, we need some preliminary lemmas.

\begin{lemma}\label{STIME: LEMMA: 1: STIMA DEL PRIMO TIPO}
Let \(p\geq 3\), \( a\in\left[0,\frac{1}{2}\right] \)and let \(k(a)\coloneqq (1-a)^{p-1}-(p-1)(1-a)^{p-2}\), then 
\[
\max_{a\in\left[0,\frac{1}{2}\right]} k(a)
=
k\left(\frac{1}{2}\right)
=
\frac{3-2p}{2^{p-1}}.
\]
\end{lemma}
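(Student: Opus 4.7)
The plan is to compute $k'(a)$ directly and verify that $k$ is non-decreasing on $[0, 1/2]$ for $p \geq 3$, so that the maximum is attained at the right endpoint $a = 1/2$, and then evaluate $k(1/2)$ to recognize the claimed value $(3-2p)/2^{p-1}$.

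Concretely, differentiating $k(a) = (1-a)^{p-1} - (p-1)(1-a)^{p-2}$ using the chain rule gives
\[
k'(a) = -(p-1)(1-a)^{p-2} + (p-1)(p-2)(1-a)^{p-3} = (p-1)(1-a)^{p-3}\bigl[(p-3) + a\bigr].
\]
The factor $(p-1)(1-a)^{p-3}$ is non-negative on $[0,1/2]$ for $p \geq 3$, and $(p-3) + a \geq 0$ on the same interval. Hence $k'(a) \geq 0$ on $[0, 1/2]$, so $k$ is non-decreasing and
\[
\max_{a \in [0,1/2]} k(a) = k(1/2).
\]

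Finally, I would compute
\[
k(1/2) = \frac{1}{2^{p-1}} - \frac{p-1}{2^{p-2}} = \frac{1 - 2(p-1)}{2^{p-1}} = \frac{3 - 2p}{2^{p-1}},
\]
which matches the claimed value. There is no real obstacle here: the only mild point to keep straight is that in the boundary case $p = 3$ the factor $(1-a)^{p-3} = 1$ so the formula for $k'$ remains valid and non-negative, and in the other boundary case $p = 3$, $a = 0$ one has $k'(0) = 0$ without altering monotonicity.
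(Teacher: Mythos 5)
Your proof is correct and follows exactly the route the paper indicates — the paper simply states ``studying the derivative, it is easily seen that the function is monotonically increasing in the interval $[0,1/2]$,'' and you have carried out that derivative computation explicitly, with the clean factorization $k'(a) = (p-1)(1-a)^{p-3}\bigl[(p-3)+a\bigr]$ and the evaluation at $a = 1/2$. The only cosmetic slip is in your final sentence, where you twice write ``$p=3$'' when the second instance was presumably meant to be a different boundary case; it has no bearing on the argument.
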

\begin{proof}
Studying the derivative, it is easily seen that the function is monotonically increasing in the interval \([0, 1/2]\).
\end{proof}

Let's define
\[
h(a)\coloneqq p(p-1)\int_0^1s\abs{a-s}^{p-2}\,ds\quad a\in[0,1].
\]

\begin{lemma}\label{STIME: LEMMA: 2: STIMA DEL PRIMO TIPO}
For every \(p\geq 2\),
\(
h(a)=a^p+(1-a)^{p-1}(a+p-1)\quad a\in[0,1].
\)
\end{lemma}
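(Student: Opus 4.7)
The plan is a direct computation: split the integral at the point where the absolute value changes sign, evaluate each piece explicitly, and add them up.

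Specifically, since $a \in [0,1]$, I would write
\[
h(a) = p(p-1)\int_0^a s(a-s)^{p-2}\,ds + p(p-1)\int_a^1 s(s-a)^{p-2}\,ds.
\]
For the first integral I would substitute $u = a-s$, turning it into $\int_0^a (a-u)u^{p-2}\,du = \frac{a^p}{p-1} - \frac{a^p}{p} = \frac{a^p}{p(p-1)}$, so the first piece contributes exactly $a^p$. For the second, I would substitute $u = s-a$, giving $\int_0^{1-a}(u+a)u^{p-2}\,du = \frac{(1-a)^p}{p} + \frac{a(1-a)^{p-1}}{p-1}$, so after multiplying by $p(p-1)$ the second piece contributes $(p-1)(1-a)^p + pa(1-a)^{p-1}$, which factors as $(1-a)^{p-1}\bigl[(p-1)(1-a)+pa\bigr] = (1-a)^{p-1}(a+p-1)$.

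Adding the two pieces gives the claimed identity. There is no real obstacle here; the only mild care needed is that the integrand $s|a-s|^{p-2}$ is a genuine (non-singular) integrable function for $p \geq 2$ even at the corner $s = a$, so the splitting is legitimate and the elementary antiderivatives apply without issue.
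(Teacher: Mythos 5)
Your computation is correct and matches the paper's own proof essentially verbatim: both split the integral at $s = a$, substitute to reduce each piece to elementary power integrals, and recombine. Nothing further is needed.
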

\begin{proof}
Since \(a \in [0,1]\), we can write
\[
\begin{split}
&h(a)
=
p(p-1)\int_0^a s(a-s)^{p-2}\,ds
+p(p-1)\int_a^1 s(s-a)^{p-2}\,ds\\
&
=
p(p-1)\int_0^a (a-t)t^{p-2}\,dt
+p(p-1)\int_0^{1-a} (a+t)t^{p-2}\,dt\\
&
=
p(p-1)\left[\frac{at^{p-1}}{p-1}-\frac{t^p}{p} \right]_0^a
+p(p-1)\left[\frac{at^{p-1}}{p-1}+\frac{t^p}{p} \right]_0^{1-a}\\
&
=
a^p+(1-a)^{p-1}(a+p-1)
\end{split}
\]
\end{proof}

\begin{lemma}\label{STIME: LEMMA: 3: STIMA DEL PRIMO TIPO}
Let \(p \geq 3\), then the minimum of the function \(h(a)\) is attained in the interval \([1/2,1]\):
\[
\min_{[0,1]}h(a)=\min_{[1/2,1]}h(a)
\]
\end{lemma}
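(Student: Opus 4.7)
The plan is to show that $h$ is monotonically non-increasing on $[0,1/2]$; this immediately yields
\[
\min_{[0,1/2]} h(a) = h(1/2) \geq \min_{[1/2,1]} h(a),
\]
and hence $\min_{[0,1]} h = \min_{[1/2,1]} h$, which is the claim.

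First I would compute $h'$ from the closed form of Lemma \ref{STIME: LEMMA: 2: STIMA DEL PRIMO TIPO}. Differentiating $h(a) = a^p + (1-a)^{p-1}(a+p-1)$ and simplifying the factor arising from the product $(1-a)^{p-1}(a+p-1)$, one obtains
\[
h'(a) = p\,a^{p-1} - p\,(1-a)^{p-2}(a+p-2).
\]
The key observation, which I would emphasize, is that the second summand is exactly $p\,k(a)$, where $k$ is the function studied in Lemma \ref{STIME: LEMMA: 1: STIMA DEL PRIMO TIPO}: indeed
\[
k(a) = (1-a)^{p-2}\bigl[(1-a)-(p-1)\bigr] = -(1-a)^{p-2}(a+p-2),
\]
so $h'(a) = p\,a^{p-1} + p\,k(a)$. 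This is the crucial bridge between the two lemmas.

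From here the bound is routine. For $a \in [0,1/2]$ the monomial $a^{p-1}$ is maximized at the right endpoint, giving $a^{p-1} \leq 1/2^{p-1}$, and Lemma \ref{STIME: LEMMA: 1: STIMA DEL PRIMO TIPO} furnishes $k(a) \leq k(1/2) = (3-2p)/2^{p-1}$. Summing these two estimates,
\[
h'(a) \leq \frac{p}{2^{p-1}} + \frac{p(3-2p)}{2^{p-1}} = \frac{2p(2-p)}{2^{p-1}} \leq 0 \quad \text{for } p \geq 2,
\]
in particular for $p \geq 3$, so $h$ is non-increasing on $[0,1/2]$ and the conclusion follows. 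I do not foresee a real obstacle: once the identification $h'(a) = p a^{p-1} + p k(a)$ is spotted, Lemma \ref{STIME: LEMMA: 1: STIMA DEL PRIMO TIPO} does all the work and the rest is a one-line inequality.
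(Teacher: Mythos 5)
Your proof is correct and takes essentially the same route as the paper's: compute $h'(a) = p\,a^{p-1} + p\,k(a)$, bound $a^{p-1}\le 2^{-(p-1)}$ and $k(a)\le k(1/2)$ on $[0,1/2]$ via Lemma \ref{STIME: LEMMA: 1: STIMA DEL PRIMO TIPO}, and conclude $h'\le \frac{2p(2-p)}{2^{p-1}}\le 0$. You merely spell out the derivative computation a bit more explicitly.
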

\begin{proof} If \(a\in (0,1/2)\), then
\( h'(a)=pa^{p-1}+p\,k(a)\leq \frac{2p}{2^{p-1}}(2-p)
<0. \)
In the interval \( \left(0,\frac{1}{2}\right)\), the function \(h(a)\) is strictly decreasing, thus it must attain the minimum in \( \left[\frac{1}{2}, 1\right]\).
\end{proof}

\begin{proof}[Proof of Proposition \ref{STIME: PROP: STIMA DEL PRIMO TIPO}]
%
If \( y=0 \) or \( x=y \), there is nothing to prove. Without loss of generality, we can assume \(x\neq y\) and \(y\neq 0\). Let \(z=x/y\), the goal is to prove that the function
\[
g(z)\coloneqq
p(p-1)\frac{\int_0^1 s|s+(1-s)z|^{p-2}\,ds}{|z-1|^{p-2}}
=
p(p-1)\int_0^1 s\left\lvert \frac{z}{z-1}-s\right\rvert^{p-2}\,ds\quad z\neq 1,
\]
is bounded below by a constant \(\lambda_p\) as in the thesis.\\

\textit{First Step:}
\(\lambda_p>0\).\\
Since \(g(z)>0\) in \(\mathbb{R}\setminus\{1\}\), the only potential problems could arise at \(z=1\) and at infinity.\\
If \(|z|\to+\infty\), then
\[
\lim_{|z|\to+\infty}g(z)
=
p(p-1)\int_0^1 s(1-s)^{p-2}\,ds
=
1>0.
\]
In particular, from here, we see that \(\inf_{z\neq 1}g(z)\leq 1\).\\
If \(z\to 1\),
\[
\lim_{z\to 1} g(z)=+\infty.
\]
Therefore, in a neighborhood of \(z=1\), \(g(z)\) is strictly positive.
We can conclude that \( 1\geq \lambda_p=\inf_{z\neq 1}g(z)>0.\)\\

\textit{Second Step:} \(\lambda_p \in \left[\frac{1}{2^p},\frac{p}{2^{p-1}}\right]\).\\
If \(p=2\), it is immediately verified that \(\lambda_p=1 \in[1/4,1]\). We can assume \(p\geq 3\).
 For obvious geometric reasons, the following chain of equalities holds:
\[
\lambda_p
=
\inf_{z\neq 1} p(p-1)\int_{0}^1 s\left\lvert \frac{z}{z-1}-s\right\rvert^{p-2}\,ds
=
\inf_{a\in\mathbb{R}} p(p-1)\int_0^1 s|a-s|^{p-2}\,ds
=
\min_{a\in[0,1]} p(p-1)\int_0^1 s|a-s|^{p-2}\,ds.
\]
From Lemmas \ref{STIME: LEMMA: 2: STIMA DEL PRIMO TIPO} and \ref{STIME: LEMMA: 3: STIMA DEL PRIMO TIPO}, we thus have \( \lambda_p=\min_{a\in[0,1]}h(a)=\min_{a\in\left[\frac{1}{2},1\right]} h(a).\)
On one hand, we can conclude that 
\[
\lambda_p\leq h\left(\frac{1}{2}\right)
=
\frac{1}{2^p}+\frac{1}{2^{p-1}}\left[p-\frac{1}{2}\right]
=\frac{p}{2^{p-1}}.
\]
On the other hand,
\[
\lambda_p
=
\min_{a\in\left[\frac{1}{2},1\right]} h(a)
=
\min_{a\in\left[\frac{1}{2},1\right]} \left( a^p 
+(1-a)^{p-1}(a+p-1)\right)
\geq \frac{1}{2^p}.
\]
\end{proof}

\vspace{0.6cm}
In order to prove Proposition \ref{STIME: PROP: STIMA DEL SECONDO TIPO}, we need a preliminary lemma.

\begin{lemma}\label{STIME: LEMMA: 1: STIMA DEL SECONDO TIPO}
For every \(p\geq 2\), we have
\begin{itemize}
    \item \(\dyle x^p-px+(p-1)\geq (p-1)(1-x)^2\) for \(x\geq 0\)
    \item \(\dyle x^p+px+(p-1)\geq \frac{p}{2}(1+x)^2\) for \(x\geq 0\)
\end{itemize}
\end{lemma}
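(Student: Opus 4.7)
The plan is to reduce each of the two inequalities to a routine monotonicity argument on $[0,\infty)$ for a single-variable auxiliary function. For the first inequality, I would begin by expanding $(p-1)(1-x)^2 = (p-1) - 2(p-1)x + (p-1)x^2$ and cancelling the constant term $(p-1)$, so that the claim becomes
\[
x^p - (p-1)x^2 + (p-2)x \geq 0 \quad \text{for } x\geq 0.
\]
Since $x\geq 0$, I may factor out $x$ and it suffices to show $g(x) := x^{p-1} - (p-1)x + (p-2) \geq 0$ on $[0,\infty)$.

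To prove $g \geq 0$, I would compute $g'(x) = (p-1)(x^{p-2} - 1)$, which is $\leq 0$ on $[0,1]$ and $\geq 0$ on $[1,\infty)$. Hence $g$ attains its global minimum on $[0,\infty)$ at $x = 1$, where a direct evaluation gives $g(1) = 1 - (p-1) + (p-2) = 0$. This yields the first inequality and also explains its two tight points: $x = 1$ (from $g(1) = 0$) and $x = 0$ (from the factor $x$), consistent with the fact that both sides of the original inequality agree at $0$ and $1$.

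For the second inequality, I would set $h(x) := x^p + px + (p-1) - \frac{p}{2}(1+x)^2$ and differentiate:
\[
h'(x) = p x^{p-1} + p - p(1+x) = p x\bigl(x^{p-2} - 1\bigr).
\]
The only critical points on $[0,\infty)$ are $x = 0$ and $x = 1$, with $h$ decreasing on $[0,1]$ and increasing on $[1,\infty)$, so the global minimum is $\min\{h(0),h(1)\}$. A quick computation gives $h(1) = 1 + p + (p-1) - 2p = 0$ and $h(0) = (p-1) - \tfrac{p}{2} = \tfrac{p-2}{2} \geq 0$ for $p \geq 2$. Therefore $h \geq 0$ on $[0,\infty)$, which proves the second inequality.

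There is no substantive obstacle: both statements collapse, after the elementary algebraic simplification, to showing that a smooth function on $[0,\infty)$ with a single interior critical point at $x = 1$ is nonnegative, which one checks by evaluating at $x = 1$ and at the endpoint $x = 0$. The only care required is the correct expansion of $(1 \pm x)^2$ and the sign analysis of $x^{p-2} - 1$, together with the structural observation that both inequalities are tight at $x = 1$ (and the first additionally at $x = 0$), which guides where to look for the minimum.
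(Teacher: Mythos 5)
Your proof is correct. For the first inequality you do exactly what the paper does: subtract, factor out $x$, and show that $x^{p-1}-(p-1)x+(p-2)$ attains its minimum value $0$ at $x=1$ by examining the sign of $(p-1)(x^{p-2}-1)$.

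For the second inequality you diverge slightly. The paper asserts that the second inequality ``follows directly from the first one and some algebraic manipulation'' without spelling it out; the identity it has in mind is
\[
\Bigl(x^p+px+(p-1)-\tfrac{p}{2}(1+x)^2\Bigr)
-
\Bigl(x^p-px+(p-1)-(p-1)(1-x)^2\Bigr)
= \tfrac{p-2}{2}(1-x)^2 \geq 0,
\]
so the second difference is the first plus a visibly nonnegative quantity. You instead prove the second inequality from scratch by the same calculus argument as the first: $h(x)=x^p+px+(p-1)-\tfrac{p}{2}(1+x)^2$ has $h'(x)=px(x^{p-2}-1)$, so $h$ decreases on $[0,1]$, increases on $[1,\infty)$, and $h(1)=0$. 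This is equally elementary, and arguably more uniform and more self-contained than the paper's terse reduction, since no algebraic identity has to be spotted. One tiny stylistic remark: since $h$ is monotone down to $x=1$ and then up, the global minimum on $[0,\infty)$ is simply $h(1)$; the separate check $h(0)=\tfrac{p-2}{2}\ge 0$ is superfluous (though harmless).
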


\begin{proof}
The second inequality follows directly from the first one and some algebraic manipulation.
Let's prove the first inequality. Let \(g(x)=x^p-px+(p-1)-(p-1)(1-x)^2= x\left(x^{p-1}-(p-1)x+(p-2)\right)\)
Then \(g(x)\geq 0\) for \(x\geq 0 \iff\) \(h(x)=x^{p-1}-(p-1)x+(p-2)\geq 0\) for \(x\geq 0\). But
\[
h'(x)=(p-1)x^{p-2}-(p-1)=(p-1)(x^{p-2}-1).
\]
So the function \(h\) has a minimum point at \(x=1\) where \(h(1)=0\).
\end{proof}

\begin{proof}[Proof of Proposition \ref{STIME: PROP: STIMA DEL SECONDO TIPO}]
Without loss of generality, we can assume \(y\neq x\) and \(y\neq 0\), since in these cases the inequality is immediate.
Dividing by \(|y|^{p-2}\), we need to demonstrate that
\[
p(p-1)\int_0^1 s\left| s+(1-s)\frac{x}{y}\right|^{p-2}\,ds
\geq\frac{p}{2}.
\]

Renaming \(\dyle \frac{x}{y}\) as \(x\), we define the function \(f(x)\) as follows

\[ f(x)=p(p-1)\int_0^1 s|s+(1-s)x|^{p-2}\,ds \quad x\in\mathbb{R}\setminus \{1\}. \]

With the change of variable \(t=x+(1-x)s\), we get:

\[ f(x)=\frac{p(p-1)}{(1-x)^2}\int_1^x (x-t)|t|^{p-2}\,dt. \]

\textit{First Case: \(x\geq 0\).}\\
\[
\begin{split}
f(x)=\frac{p(p-1)}{(1-x)^2}\int_1^x (x-t)t^{p-2}\,dt
=
\frac{p(p-1)}{(1-x)^2}\left[\frac{xt^{p-1}}{p-1}-\frac{t^p}{p}\right]^x_1
=
\frac{x^p-px+(p-1)}{(1-x)^2}
\geq
(p-1)
\geq\frac{p}{2}
\end{split}
\]
thanks to Lemma \ref{STIME: LEMMA: 1: STIMA DEL SECONDO TIPO}.

\textit{Second Case: \(x<0\).}\\
\[
\begin{split}
&f(x)=\frac{p(p-1)}{(1-x)^2}\int_x^1(t-x)|t|^{p-2}\,dt
=
\frac{p(p-1)}{(1-x)^2}\left[\int_x^0(t-x)|t|^{p-2}\,dt+\int_0^1(t-x)t^{p-2}\,dt\right]\\[10pt]
&
=
\frac{p(p-1)}{(1-x)^2}\left\{-\int_0^{-x}(t+x)t^{p-2}\,dt+\left[\frac{t^p}{p}-\frac{xt^{p-1}}{p-1}\right]^1_0\right\}
\quad\text{let }y=-x,
\\[10pt]
&
=
\frac{p(p-1)}{(1+y)^2}\left\{-\int_0^y(t^{p-1}-yt^{p-2})\,dt+\frac{1}{p}+\frac{y}{p-1}\right\}
=
\frac{y^p+py+(p-1)}{(1+y)^2}
\geq
\frac{p}{2}
\end{split}
\]
thanks to Lemma \ref{STIME: LEMMA: 1: STIMA DEL SECONDO TIPO}.\\

We have thus shown that \(f(x)\geq \frac{p}{2}=f(-1).\)
\end{proof}

\section*{Acknowledgements}
The author is partially supported by Progetti per Avvio alla Ricerca di Tipo 1 – Sapienza University of Rome, prot. no. AR123188AF021F31 and prot. no. AR1241906F35D8DC.

\nocite{*}
\bibliographystyle{plain} 
\bibliography{Bibliografia}
\end{document}